\numberwithin{equation}{section}
\definecolor{mygreen}{rgb}{0.30,0.44,0.07}
\definecolor{orange}{rgb}{1.00,0.65,0.00}
\theoremstyle{plain}
\newtheorem{thm}{Theorem}
\newtheorem{lem}[thm]{Lemma}
\newtheorem{prop}[thm]{Proposition}
\theoremstyle{definition}
\newtheorem{rem}{Remark}
\newenvironment{pfofThm}{\noindent{\bf Proof of Theorem}}{\hfill $\square$ \\}
\newenvironment{pfofLem}{\noindent{\bf Proof of Lemma}}{\hfill $\square$ \\}
\newcommand{\id}{\ensuremath{\displaystyle{\mathop {=} ^d}}}
\newcommand{\as}{\ensuremath{\displaystyle{\mathop {=} ^{a.s.}}}}
\newcommand{\approxid}{\ensuremath{\displaystyle{\mathop {\approx} ^d}}}
\newcommand{\field}[1]{\mathbb{#1}}
\newcommand{\real}{\ensuremath{{\field{R}}}}
\newcommand{\sumab}[2]{\ensuremath{\sum\limits_{#1}^{#2}}}
\newcommand{\intab}[2]{\ensuremath{\int_{#1}^{#2}}}
\newcommand{\intinf}[1]{\ensuremath{\int_{#1}^{\infty}}}
\newcommand{\arrowf}[1]{\ensuremath{\displaystyle {\mathop {\longrightarrow}_{#1 \rightarrow \infty}\,}}}
\newcommand{\limit}[1]{\ensuremath{\displaystyle {\lim_{#1 \rightarrow{\infty}}}}}
\newcommand{\suprem}[1]{\ensuremath{\displaystyle {\sup_{#1}}}}
\newcommand{\conv}[1]{\ensuremath{\, \displaystyle {\mathop{\longrightarrow}^{#1}}}\, }
\newcommand{\one}{\mathds{1}}
\definecolor{mygreen}{rgb}{0.30,0.44,0.07}
\definecolor{myorange}{rgb}{1.00,0.65,0.00}
\title{A hybrid-Hill estimator enabled by heavy-tailed block maxima}
\author{Cl\'{a}udia Neves \\  
 {\small King's College London}\\ 
  \and {Chang Xu}\\ {\small King's College London}
}
\date{}
\begin{document}

\maketitle


\begin{abstract}
When analysing extreme values, two alternative statistical approaches have historically been held in contention: the block maxima method (or annual maxima method, spurred by hydrological applications) and the peaks-over-threshold.
Clamoured amongst statisticians as wasteful of potentially informative data, the block maxima method gradually fell into disfavour whilst peaks-over-threshold-based methodologies climbed to the centre stage of extreme value statistics.
This paper devises a hybrid method which reconciles these two hitherto disconnected approaches. Appealing in its simplicity, our main result introduces a new universality class of extreme value distributions that discards the customary requirement of a sufficiently large block size for the plausible block maxima-fit to an extreme value distribution. Natural extensions to dependent and/or non-stationary settings are mapped out. We advocate that inference should be drawn solely on larger block maxima, from which practice the mainstream peaks-over-threshold methodology coalesces: the asymptotic properties of the hybrid-Hill estimator herald more than its efficiency, but rather that a fully-fledged unified semi-parametric stream of statistics for extreme values is viable. A reduced-bias off-shoot of the hybrid-Hill estimator provably outclasses the incumbent maximum likelihood estimation that relies on a numerical fit to the entire sample of block maxima.
\end{abstract}

\textbf{Keywords:}
Aggregated data, dependence, extreme value theory, peaks-over-threshold, regular variation theory, semi-parametric estimation.

\section{Introduction}

Despite there being various approaches by which extreme values can be statistically analysed, these have historically been categorised into two main strands that eventually evolved in separate ways:
methods for maxima over fixed-length intervals and methods for exceedances (or peaks) over a high threshold. The former relates the oldest group of extremal models, arising from the seminal work of Gumbel (1958) as block maxima (BM) models to be fitted to the largest observation collected from each of the large samples (blocks) of identically distributed observations. The peaks-over-threshold's (POT's) strand of development seems to have deployed from \citet{TodorovicZelenhasic1970}. The impacts of adopting either a BM method or the POT approach are still being thrashed out. Few studies have looked at the issue and many that have are aimed at a judicious selection of one approach over the other  -- a challenge more often than not won by the POT; see \cite{BucherZhou2021} -- rather than addressing them with a reconciliatory stance. A common problem is that studies fail to ensure that POT is not applying a BM method as well. While the BM method lends itself well to lower-frequency, long-span data, the POT approach is preferable for high-frequency sequences such as daily or hourly observations. This is typically the narrative to discerning  between approaches for practical applications. The nuance though is reflected on daily maxima, just to give an example, where there is a level of aggregation in the data and only maximum daily temperatures have been observed. Another example is when the interest is in hourly maximum windspeed or wind gusts. To the best of our knowledge, the only closely related approach for statistical inference in this direction is the work by \cite{Wager2014}. But, so too, their relatively elaborate subsampling procedure entails that the resulting extreme values estimation must concede to the provision of a block size $m(n)$ going to infinity with the sample size $n$. In our proposed hybrid approach, the large enough block size $m$ for the extreme value theorem to hold within each block is a catalyst instead of a requirement. This is found to translate into significant gains to the estimation of extreme value characteristics, especially that of a positive extreme value index. Since our estimators do not depend on the choice of the block size, and indeed are robust to changes in $m$, our findings add significantly to the current body of knowledge.

This paper addresses the estimation of the positive extreme value index $\gamma >0$, regarded as a measure of tail-heaviness, in a principled way that conflates two hitherto separate statistical inference tenets in extreme value statistics, both BM and POT methodologies. We begin with the classical BM method which severs the dataset into non-overlapping blocks and models only the maximum value from each block to an asymptotically prescribed extremal value distribution. Since it discards sub-maximal observations, the BM framework is robust to measurement errors and short-term irregularities, making it well suited to datasets with lower temporal resolution or long historical spans. But while shorter blocks do not fully mitigate seasonal cycles and impair within-block convergence to the limiting extreme value  distribution, larger blocks reduce the effective sample size and lead to a wastage of information. A similar criticism applies to de-clustering methods. According to \citet[][pp.100,178]{Coles2001}, results can be sensitive to the arbitrary choices made in cluster determination, especially running at a loss of information as a consequence of discarding all data except the cluster maxima, with missed opportunities for modelling within-cluster behaviour since there may be short-range dependence within blocks but not between blocks \citep{Katz2002}.

Closely related to Hill's estimation, it can be deduced from a possible extension to the case $r=0$ of Theorem 2.2 in \citet[][albeit for $\gamma < 1/2$, as $m\rightarrow \infty$]{FerreiradeHaan2015} that the Hill's statistic is a valid estimator of $\gamma >0$. But if employed using the entire sample of BM, as current practices advise for averting wastefulness of data, the asymptotic variance of this estimator results very large with no closed form expression as a function of $\gamma$. Figure \ref{Fig:VaryingBlockSize} offers an early sight of what our proposed hybrid estimation can achieve when using a sufficiently large BM-underpinned random threshold that does not depend on the block size $m$. In Figure \ref{Fig:VaryingBlockSize}, the Hill's estimates are computed using the top $25\%$ of all generated $k$ block maxima. The $k$ blocks, data slices of varying size $m= 1, 2, \ldots, 100$, tally to a sample of size $n=10,000$. The maximum likelihood estimation was conducted in the usual way, based on the numerical fit of the limiting GEV to the whole sample of BM. These preliminary simulation results evidence that it is preferable to have fewer observations within each block and many more BM for minimising the MLE's mean squared error.

\begin{figure}
\begin{center}
\includegraphics[scale=0.25]{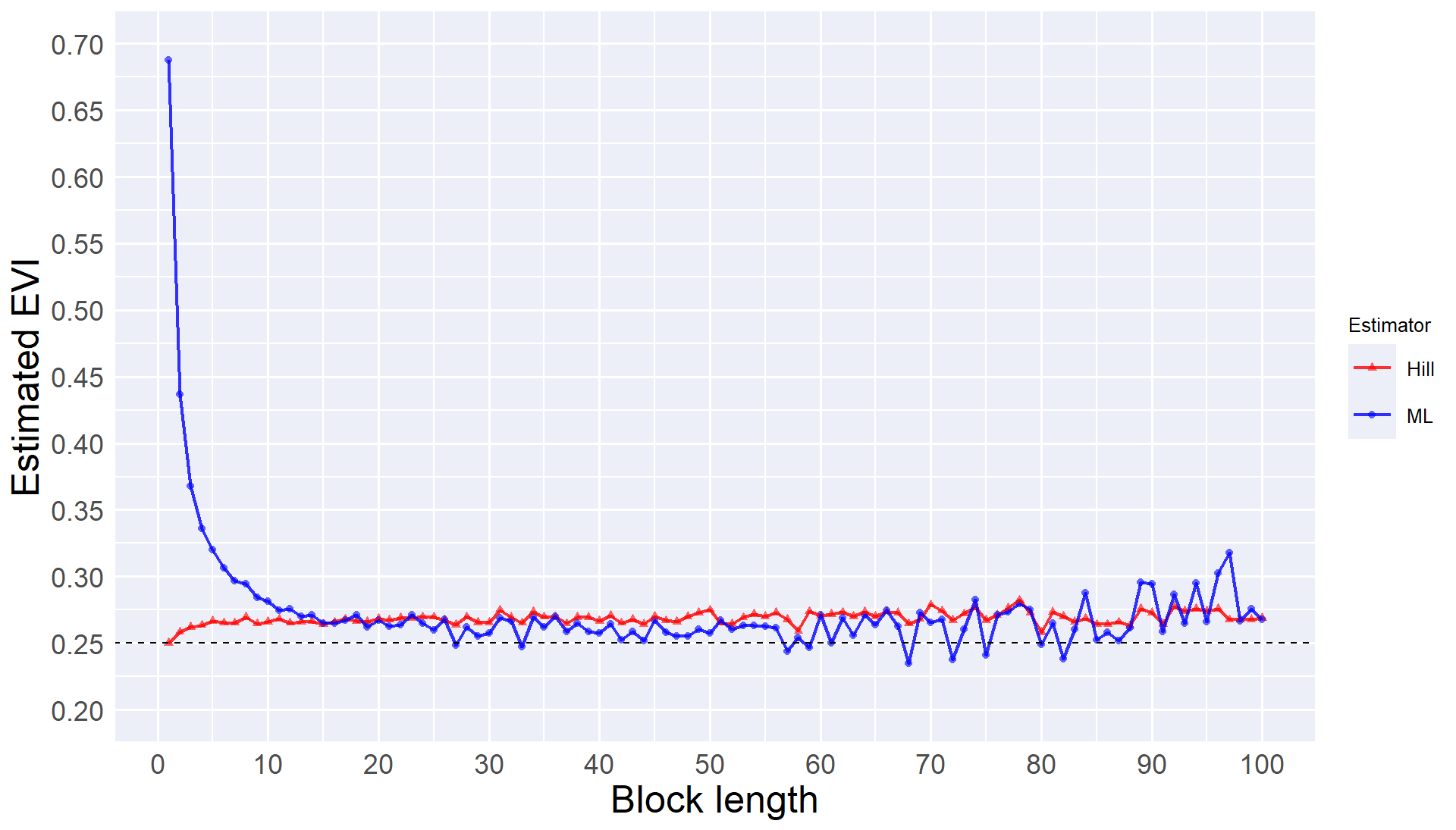} 
\includegraphics[scale=0.25]{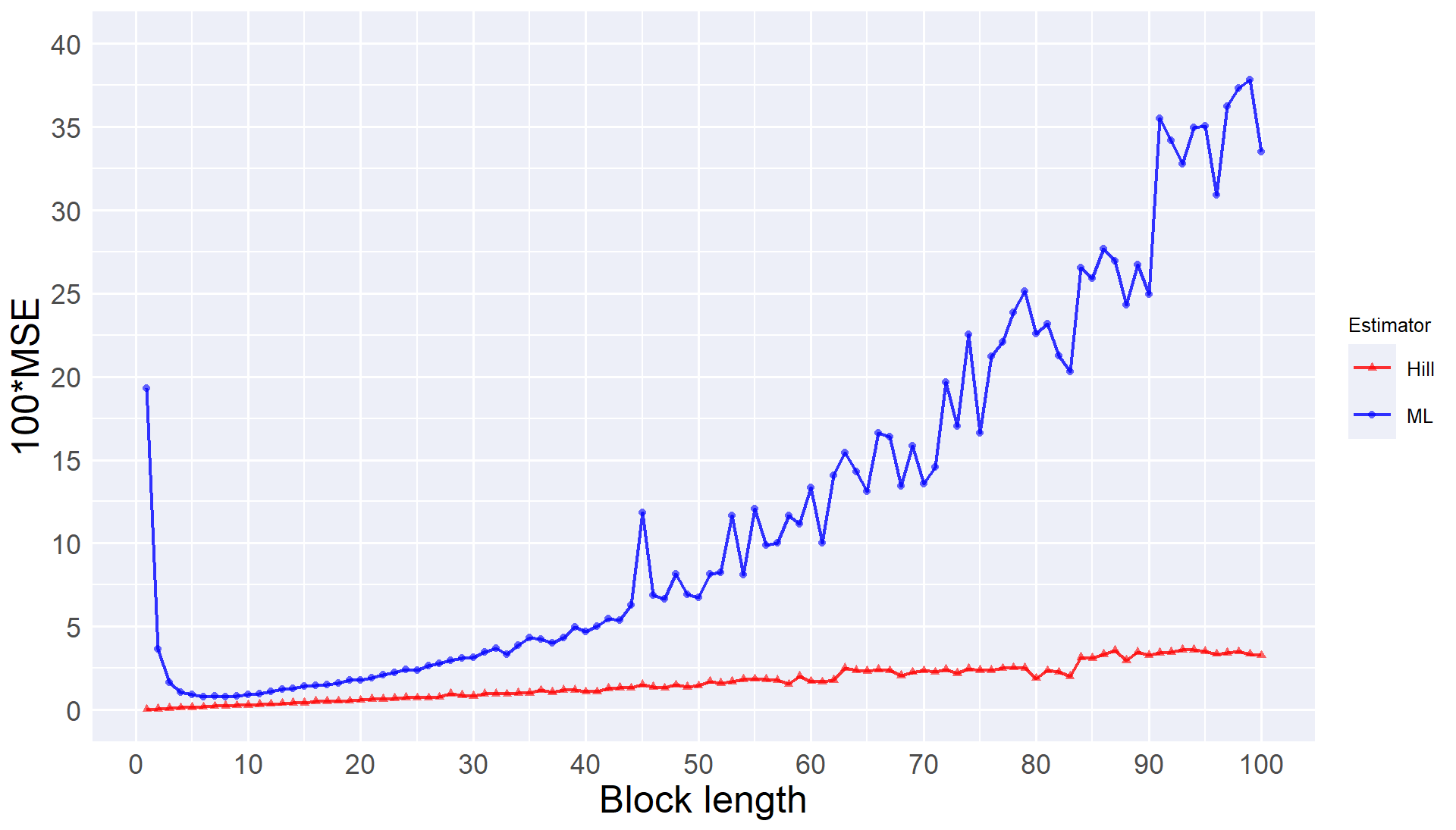}
\end{center}
\caption{Average estimates of both hybrid-Hill and GEV-maximum likelihood estimates and their respective empirical mean squared errors (MSE), plotted against the block size $m=1, \ldots, 100$. Simulations consist of $1000$ replicates of a sample of $n=10,000$ i.i.d. observations taken from a Pareto distribution with extreme value index $\gamma =1/4$.}
\label{Fig:VaryingBlockSize}
\end{figure}

On the other hand, the inadequacy of small block sizes for the extreme value theorem to hold is offset by the added flexibility for setting a larger threshold in the hybrid-Hill estimation, with augmented capacity for accurate inference that actually takes places in the tail region. Eventually, for smaller block sizes nearing a single data-point, the BM method seamlessly defaults to the mainstream POT approach ascribed to $m=1$. The assumptions involved, laid out just next in Section \ref{SSec:Struct}, do allow every $m\geq 1$. By construction, the hybrid method discards those lower block maxima that  might be too small to be considered representative of extremal behaviour and would otherwise hinder a a proper GEV parametric fit. One such applied situation relates the contribution of very dry months to assessing extreme monthly rainfall, whose lowest maxima values would scupper every attempt to find a plausible GEV-fit to the whole sample of BM. Not for nothing, avoiding this pitfall is deemed a necessity, formalised in Condition 3.2 of \citet{BucherSegers18}, albeit in a time-series context.\\


To fix ideas, let us begin with a sequence $X_1,X_2, \ldots , X_n, \dots$ of independently and identically distributed (i.i.d.) random variables with common distribution function (d.f.) $F$ belonging to the max-domain of attraction of an extreme value distribution $G$ in the Fr\'echet family of distributions. That is, we assume that there exist constants $a_n>0$ such that
\begin{equation}\label{eq:DoA}
\limit{n} P\bigl\{ a_n^{-1}\max(X_1, \ldots, X_n) \leq x \bigr\} =  \limit{n} F^n\bigl(a_n x\bigr) = G(x),
\end{equation}
for every continuity point of  $G$. Then, $G(x)=G_{\gamma}\bigl((x-1)/\gamma\bigr)$ and $G_\gamma$ is the Generalised Extreme Value (GEV) distribution function given by
\begin{equation*}
    G_\gamma(x)=\exp\bigl\{-(1+\gamma x)^{-\frac{1}{\gamma}}\bigr\},
\end{equation*}
for all $x$ such that $1+\gamma x>0$, whereby we say that $F$ belongs to the domain of attraction of the Fr\'echet distribution with distribution function $\exp\{-x^{-1/\gamma}\}$, $x\geq 0$, for some extreme value index $\gamma>0$. Let us note that if there is a limiting distribution to the i.i.d. partial sequence of linearly normalised maxima then that distribution in the limit must be max-stable. In particular, if ascribed to $\gamma >0$, the extreme value types theorem  \citep[due in various degrees of
generality to][]{FisherTippett28,Gnedenko43,deHaan1970} statement in \eqref{eq:DoA} is tantamount to the following limit relation running instead on the set of real values: for all $x \geq 0$,
\begin{equation}\label{eq:BasicDoA}
	\limit{t} \frac{1}{-t \log F \bigl(a(t)x \bigr)} = x^{1/\gamma}.
\end{equation}

We now deviate from the mainstay expansion $t (1- F(a(t)x))$ applied to the left hand-side of \eqref{eq:BasicDoA} for the establishing the dual extreme value types theorem that stipulates the Generalised Pareto distribution as the limit of POT observables \citep[cf.][Section~1.1.2 see p.177, e.g.]{deHaanFerreira2006}, to focus instead on the limit of a suitably normalised $1-F^{t}(a(t)x)$, for $x \rightarrow \infty$.
This paves the way for a characterisation of max-domains that is very similar to that for pot-domains of attraction towards the prescribed Pareto limit, eventually unifying the two. While the domain of attraction condition \eqref{eq:BasicDoA} appertains to the unknown distribution $F$ generating the raw data, our identified links between POT and BM conditions will arise from analogous considerations around the tail distribution function of block maxima $1-F^t$. Crucially, the proposed Hill estimator of $\gamma>0$ stems from this recasting for obtaining the universality class that encloses both BM and POT domains of attraction, analysed one at a time in \citet{BucherZhou2021}. This is the key insight to the aimed unifying statistical methodology for extreme values: inference conditional on larger block maxima, yet still predicated on the negative-log transform of the initial distribution function $F$ as in \eqref{eq:BasicDoA}.

Here and throughout, the block maxima (BM) are represented by the sequence of random variables $\{M_{i}^{(m)}\}_{i=1}^k$ defined, for $m= 1, 2, \ldots$ and $i= 1, \ldots, k$ $(k \in \mathbb{N})$ as
\begin{equation}\label{eq:BMdef}
    M_{i}^{(m)} :=\mathop{\max}\limits_{(i-1)m<j<im}X_{j}\,,
\end{equation}
and constitute an i.i.d. sample taken from the distribution $F^m $.
It is worth our while to highlight at this point that the distribution $F^{m}$ is in the same max-domain of attraction of the initial distribution function $F$: the two distribution functions are regularly varying in the tail with the index $-1/\gamma <0$ and $1-F^m$ has second-order parameter governing the rate of that convergence matching that of $1-F$ (see Appendix \ref{App:DoAFm}); their extreme value limiting distributions coincide as in the embodiment of the max-stability property exclusive to the only three possible extreme value distributions, which has earned them formidable uses in vastly many applications  \citep{Beirlantetal2004,Resnick2007,ReissThomas2008}. 

Through the blocking mechanism \eqref{eq:BMdef} for obtaining a sample made up of $k$ independent BM with the same distribution function $F^m$, our proposed hybrid-Hill estimator (H2 estimator) employs the $k_0$ upper order statistics of the partial BM sequence. For every block-length $m \geq 1$, let $M^{(m)}_{1:k} \leq M^{(m)}_{2:k} \leq \ldots \leq M^{(m)}_{k:k}$ be the order statistics of $\{M_{i}^{(m)}\}_{i=1}^k$. The H2 estimator of $\gamma >0$ is defined as
\begin{equation}\label{eq:HibHill}
    \hat{\gamma}^{\textrm{H2}} \equiv  \hat{\gamma}^{\textrm{H2}}(k_0,k) := \frac{1}{k_0} \sumab{i=0}{k_0-1} \log M^{(m)}_{k-i:k} -\log M^{(m)}_{k-k_0:k}\, .
\end{equation}

\subsection{Structure of the paper}\label{SSec:Struct}

We shall assume throughout that $\{X_{n}\}_{n \geq 1}$ is an independent (possibly weakly dependent) sequence of random variables whose common distribution function $F$ satisfies the primary extreme value condition \eqref{eq:BasicDoA}. Then, by continuity of the limiting Fr\'echet distribution function  and monotonicity of the left hand-side of \eqref{eq:DoA}, the convergence in \eqref{eq:BasicDoA} is uniform, whereby
\begin{equation}\label{eq:UnifConvDoA}
	\limit{t} \, \suprem{x \in \real} \,\Bigl| \frac{1}{- t \log F(a(t)x)} - x^{1/\gamma}\Bigr| = 0.
\end{equation}
Equivalent to \eqref{eq:UnifConvDoA} is the next condition formulated in terms of the quantile-type function
\begin{equation*}
	V:= \bigl( 1/ (-\log F) \bigr)^{\leftarrow}= F^{\leftarrow} (e^{-1/t}),
\end{equation*}
where $^{\leftarrow}$ denotes generalised inverse.

\emph{Condition A1.} Assume that: 
(i) $ \lim_{t\rightarrow \infty}
	\log V(tx) - \log V(t) = \gamma \log x$, for all $x>0$; (ii)
for every $\varepsilon,\delta>0$ there is a $t_0= t_0(\varepsilon,\delta)$, such that for $t\geq t_0$ and $x\geq 1$,
\begin{equation*}
    \Bigl|\frac{V(tx)}{V(t)}-x^{{\gamma}}\Bigl|\leq \varepsilon x^{{\gamma}+\delta}.
\end{equation*}
We then say that $V$ is regularly varying with index $\gamma$, i.e., it satisfies 
\begin{equation*}
    \limit{t} \frac{V(tx)}{V(t)}=x^{\gamma},
\end{equation*}
for all $x>0$, and use the notation $V \in RV_{\gamma}$.

Together with the max-stability property of the limiting extreme value distribution $G_\gamma$, the uniform relation \eqref{eq:UnifConvDoA} brings to light the coincidence between the classes of distributions $F$ and $F^{t}$ induced by each of their specific max-domains of attraction conditions. What is involved in unifying the two classes stems from the asymptotically equal rescaling by $V(t) \sim a(t)$, as $t \rightarrow \infty$ \citep[cf.][Proposition 2.12, p.108]{Resnick1987}.

\textbf{This paper is organised as follows.} Firstly, we introduce a cursory semi-parametric estimator of $\gamma>0$, whose asymptotic behaviour encompassed \textbf{in Section \ref{Sec:CoEstim}} can be read off from Condition A1 in self-evident way. Since the goal is to widen the focus to the tail distribution function underlying the BM picked up from blocks of size $m$, we devise a new set of extreme value conditions which will unveil the Pareto-tail as the proper limit for the regularly varying $1-F^m$ upon inner normalisation by $V(m(t-1/2))$.

\emph{Condition B.} With $V$ satisfying Condition A1, assume the following holds for every $m \geq 1$: 
\begin{equation}\label{eq:RVofFm}
    \lim_{t\rightarrow \infty} \frac{1-F^m\bigl( x V(m(t-1/2)) \bigr) }{ 1 - F^m \bigl( V(m(t-1/2)) \bigr)}= x^{-1/\gamma},
    \end{equation}
    for all $x>0$.
Since $V$ is nondecreasing, there are accompanying uniform bounds in the sense of Proposition B.1.10 of \citet{deHaanFerreira2006}: for each $\varepsilon,\delta>0$ there is $t_0= t_0(\varepsilon,\delta)$ such that for $t$, $tx\geq t_0$,
\begin{equation*}
    \biggl|\frac{1-F^m\bigl( x V(m(t-1/2)) \bigr) }{ 1 - F^m \bigl( V(m(t-1/2)) \bigr)}-x^{-\frac{1}{\gamma}}\biggl|\leq \varepsilon \max(x^{-\frac{1}{\gamma}-\delta}, \,x^{-\frac{1}{\gamma}+\delta}).
\end{equation*}

Proposition \ref{Prop:DoACondMax} in Appendix \ref{App:CondB1} ascertains that Condition B does provide a universal and complete characterisation of max-domains of attraction, irrespective of $m \geq 1$ being set fixed or assumed going to infinity with $n$. 

In order to make matters simple and furnish an extreme-value condition that is fit for our purposes, we proceed to the estimation on the basis of Condition B, since the normalisation with $V(m(t-1/2))$ makes it possible for inference to stretch back into the tail region so as to harness intermediate BM values in addition to the extreme. This renders a semi-parametric approach at the intersection of BM and POT methodology in a fruitful and impactful way for applications. \textbf{In Section \ref{Sec:HybridHill}}, the hybrid-Hill estimator presented in \eqref{eq:HibHill} is engendered on the empirical counterpart of Condition B. To heap on the evidence that BM and POT approaches are effectively unified, we note that such a development conforms to a wealth of already published work around the asymptotic properties of the original Hill's estimator \citep{Hill1975} and more widely to large sample properties of semi-parametric estimators specialised in (log-)exceedances above a high threshold.

 The speed of convergence in Condition A1, and thus that of \eqref{eq:RVofFm}, is not the same for all parent distributions, but it can be made explicit through the second-order refinement in the next condition. Condition A2 will prove instrumental to capturing the approximation bias, a deterministic pre-asymptotic bias, that such deviations from the limiting Pareto-tail instil in the H2 estimator. Of relevance in this regard is Lemma \ref{Lem:2ndOrdLogShift} in Appendix \ref{App:CondB2}, which demonstrates in particular that the convergence in Condition B can be much slower than that with Condition A1. 
 
\emph{Condition A2}: Suppose there exists a function $A$ ultimately of constant sign and $\lim_{t\to \infty}A(t)=0$, such that
\begin{equation}\label{eq:2ndRVofV}
     \limit{t}\frac{\frac{V(tx)}{V(t)}-x^{\gamma}}{A(t)}=x^{{\gamma}}\frac{x^{\rho}-1}{\rho},
    \end{equation}
for all $x>0$. This convergence is necessarily locally uniform. Moreover, $|A| \in RV_{\rho}$, $\rho \leq 0$ and since $A$ converges to zero then \eqref{eq:2ndRVofV} is equivalent to 
\begin{equation}\label{eq:2ndRVLogV}
    \limit{t}\frac{\log V(tx)-\log V(t)-\gamma \log x}{A(t)}=\frac{x^{\rho}-1}{\rho},
\end{equation}
for all $x>0$.
Consequently, Theorem B2.18 of \citet{deHaanFerreira2006} applied against the background of Condition A2 establishes, for a possibly different function $A_{0}$, with $A_{0}(t)\sim A(t)$, $t\rightarrow\infty$, and for each $\varepsilon,\delta>0$ that there exists a $t_0$ such that for $t\geq t_0$, $x\geq 1$, the following inequality holds:
\begin{equation}\label{inequality}
    \Bigl|\frac{\log V(tx)-\log V(t)-\gamma \log x}{A_0(t)}-\frac{x^{\rho}-1}{\rho}\Bigr|\leq\varepsilon x^{\rho+\delta}.
\end{equation}
We then say that the quantile-type function $V$ is second order regularly varying at infinity with second order parameter $\rho \leq 0$ governing the rate of convergence in Conditions A1(i)-(ii) and use the notation $V \in 2RV(\gamma, \rho)$.

The remainder of the paper is outlined as follows. \textbf{In Section \ref{Sec:OptimalFraction}} we gather useful results concerning the optimal selection of the top fraction of BM employed in the H2 estimation of $\gamma>0$.  Some of these considerations are carried forward to \textbf{Section \ref{Sec:RBH2}}, where a bias reduction procedure is devised that pieces together the cursory estimator from Section \ref{Sec:CoEstim} and the H2 estimator's asymptotic results expounded in Section \ref{Sec:HybridHill}. To this end, we shall adopt a refinement of the class of regularly varying functions $V$ by allowing a uniform convergence condition that combines \eqref{eq:2ndRVLogV} with Condition B, and through which mapping we get a still much rich class that corrals distributions $F^m$ indexed by two parameters $(\gamma, \tilde{\rho}) \in \real^+ \times [-1, \, 0]$.
Section \ref{Sec:RBH2} is interspersed with simulation results aimed at elucidating how the H2 estimator succeeds in its innovative use with BM. Used in tandem with the cursory estimator, the H2 estimator gets subtracted of its approximation (second order) bias and asserts itself as a serious competitor in a level playing field that visibly merges BM and POT approaches. 
Those more technical proofs are deferred to \textbf{Section \ref{Sec:Proofs}}.

A comprehensive simulation study is available as \textbf{supplementary material} to this paper.

\subsection{Notation}

In this paper, we use $F\in D(G_{\gamma})$ to denote the max-domain of attraction of $F$, i.e., the set of all distribution functions $F$ for which the BM have the same type of asymptotic distribution $G_{\gamma}$, $\gamma \in \real$. We have used and will continue to use the condensed notations $\overline{F}:= 1-{F}$,  $\overline{F^{m}}:= 1-{F}^{m}$ and $g(t):= t-1/2$. 
Regular variation with index $\gamma$ is $RV_\gamma$ and its second order refinement is $2RV(\gamma, \rho)$. For $u \in \real$, let as usual $[u]$ represent the smallest integer greater than or equal to $u$. 

\subsection{N.N.I.I.D. random variables or I.N.N.I.D. random variables}\label{Sec:ennes}

The standard practice of identifying clusters of high-level exceedances with the intention of concentrating on cluster maxima for data analysis is a form of data aggregation that not only is permissible but also seamlessly covered by the hybrid semi-parametric framework. Each block containing at least one threshold exceedance, hence treated as a cluster, is one situation under the general umbrella of not necessarily independent identically distributed (N.N.I.I.D.) random variables which exerts further reduction of the block size through the approximate equality
\begin{equation*}
	P(M_i^{(m)} > x) \approx 1- F^{m\theta}(x),
\end{equation*}
where $0 < \theta \leq 1$ is the so-called extremal index. Since Condition B-\eqref{eq:RVofFm} can hold for every $i =1, 2, \ldots, k$, universality class it induces makes our results particularly suited for tackling extreme value estimation problems where the independence assumption must be dropped.
Furthermore, Condition B remains valid under mild mixing conditions such as Leadbetter's condition $D$.

The new setting of regular variation for characterising maxima taken from a heavy-tailed distribution, encapsulated in Conditions A1 and B and including their respective second order conditions (details in Appendices \ref{App:CondB2} and \ref{App:DoAFm}) inherits very many of the existing asymptotic theory for independent not necessarily identically distributed (I.N.N.I.D.) random variables in a plentiful manner. It is a straightforward exercise to assemble the universality Condition B into the theorem of convergence of heteroscedastic extremes in \citet{deHaan2015}, from which result a unifying stream of statistical methods for a trend in extremes will emanate \citep[see e.g.][]{TailTrend15,SpaceTimeTrend22}. We refer to Appendix \ref{App:CondB1}.

The now proposed hybrid-estimation methodology encompasses all the preceding settings.
The fact that the distribution function $F^{\theta m}$ may still be a plausible approximation to $\prod_{j=1}^{\theta m}F_j$, for every $\theta^{-1} \geq 1$, so long as the deviations of $F_j$ from one another do not place them in different max-domains of attraction, adds significance to the hybrid framework.
 Notably, due to the lack of asymptotic theory for non-stationary BM, current practice has been limited to the use of the GEV distribution as an emulator of the random process generating BM and non-stationarity is modelled by allowing its parameters to change with time and other covariates \citep[see e.g.][]{DupuisTrapin23}. The hybrid semi-parametric estimation fills in this gap in current knowledge, since as a result of fusing BM and POT approaches into a single  methodological strand, most of the extreme values modelling techniques that have to date carried the hallmarks of each approach have just become transferable and ready for uptake.

\section{A cursory semi-parametric hybrid estimator}\label{Sec:CoEstim}

Let $M^{(m)}_{1:k} \leq M^{(m)}_{2:k} \leq \ldots \leq M^{(m)}_{k:k}$ denote the order statistics associated with the i.i.d. BM defined in \eqref{eq:BMdef}. A cursory non-parametric estimator for $\gamma >0$,  which will be the focus in this section, is given by
\begin{equation}\label{eq:EstNaive}
    \hat{\gamma}(\theta_k, k):= (\log 2)^{-1}\bigl( \log M^{(m)}_{k-[\theta_k/4]:k} - \log M^{(m)}_{k-[\theta_k/2]:k} \bigr),
\end{equation}
where $\theta_k \ll k$. Its large sample consistency is established in the following theorem.

\begin{thm}\label{Thm:ConsistencyNaive} Let $\{X_n\}_{n\geq 1}$ be a sequence of i.i.d. random variables drawn from a distribution $F$ such that Condition A1 holds. For any block length $m\geq 2$, the cursory estimator $\hat{\gamma}(\theta_k, k)$ defined in \eqref{eq:EstNaive} is a consistent estimator for $\gamma >0$ in the sense that, as $k \rightarrow \infty$,
\begin{equation*}
    \hat{\gamma}(\theta_k, k) \conv{P} \gamma, 
\end{equation*}
provided $\theta_k \in (0, k]$ is an upper intermediate sequence, i.e., such that $\theta_k \rightarrow \infty$ and $\theta_k/ k\rightarrow 0$, as $k \rightarrow \infty$.
\end{thm}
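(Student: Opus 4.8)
The plan is to carry the classical consistency argument for difference-of-log-quantiles estimators (the Pickands/Hill circle of ideas) over to the i.i.d.\ sample of block maxima $M^{(m)}_1,\dots,M^{(m)}_k$, whose common distribution is $F^m$. Two facts are needed: (a) the tail quantile function $U_m:=\bigl(1/\overline{F^{m}}\bigr)^{\leftarrow}$ of the block maxima is regularly varying with index $\gamma$, together with the accompanying uniform (Potter) bounds; and (b) an intermediate order statistic of an i.i.d.\ sample converges in probability, after the natural rescaling, to the matching value of the tail quantile function.

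For (a), near the right endpoint of $F$ one has $\overline{F^m}(x)=1-(1-\overline F(x))^m\sim m\,\overline F(x)\sim -m\log F(x)$, so $\overline{F^m}$ is regularly varying at infinity with index $-1/\gamma$ as soon as $V\in RV_{\gamma}$ (Condition~A1(i)); equivalently $U_m\in RV_{\gamma}$, indeed $U_m(t)\sim V(mt)$ as $t\to\infty$. Transporting the inequality in Condition~A1(ii) along this asymptotic equivalence gives: for every $\varepsilon,\delta>0$ there is $t_0$ such that $\bigl|U_m(tx)/U_m(t)-x^{\gamma}\bigr|\le\varepsilon\,x^{\gamma+\delta}$ for all $t\ge t_0$ and $x\ge1$. (That $F^m\in D(G_\gamma)$, with matching second-order behaviour, is the content of Appendix~\ref{App:DoAFm}.) In particular $U_m\to\infty$, since $\gamma>0$.

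For (b), I would use the quantile representation $M^{(m)}_{k-j:k}\id U_m\bigl(1/(1-W_{k-j:k})\bigr)$, with $W_{1:k}\le\dots\le W_{k:k}$ the order statistics of an i.i.d.\ uniform $(0,1)$ sample, together with $k\,(1-W_{k-j_k:k})/j_k\conv{P}1$ for any intermediate sequence $j_k$ (a direct consequence of the Rényi representation and the weak law of large numbers). Combining this with the locally uniform convergence $U_m(ty)/U_m(t)\to y^{\gamma}$ from (a) yields $M^{(m)}_{k-j_k:k}/U_m(k/j_k)\conv{P}1$; this is the standard lemma on intermediate order statistics, see \citet{deHaanFerreira2006}. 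I would apply it with $j_k=[\theta_k/4]$ and with $j_k=[\theta_k/2]$: both are intermediate because, by hypothesis, $\theta_k\to\infty$ and $\theta_k/k\to0$. Since then $U_m(k/j_k)\to\infty$, both order statistics tend to $+\infty$ in probability, so the logarithms in \eqref{eq:EstNaive} are well defined with probability tending to one.

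It then remains to combine the pieces:
\begin{align*}
\exp\bigl\{(\log2)\,\hat\gamma(\theta_k,k)\bigr\}
&=\frac{M^{(m)}_{k-[\theta_k/4]:k}}{M^{(m)}_{k-[\theta_k/2]:k}}\\
&=\frac{M^{(m)}_{k-[\theta_k/4]:k}/U_m\bigl(k/[\theta_k/4]\bigr)}{M^{(m)}_{k-[\theta_k/2]:k}/U_m\bigl(k/[\theta_k/2]\bigr)}\cdot\frac{U_m\bigl(k/[\theta_k/4]\bigr)}{U_m\bigl(k/[\theta_k/2]\bigr)}.
\end{align*}
By (b) the first factor converges to $1$ in probability. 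For the deterministic factor, write $k/[\theta_k/4]=\bigl(k/[\theta_k/2]\bigr)s_k$ with $s_k:=[\theta_k/2]/[\theta_k/4]\to2$ and $k/[\theta_k/2]\to\infty$; the uniform $RV_{\gamma}$ bounds from (a) then give $U_m\bigl(k/[\theta_k/4]\bigr)/U_m\bigl(k/[\theta_k/2]\bigr)\to2^{\gamma}$. Hence $\exp\{(\log2)\,\hat\gamma(\theta_k,k)\}\conv{P}2^{\gamma}$, and the continuous mapping theorem gives $\hat\gamma(\theta_k,k)\conv{P}\gamma$. I do not expect a genuine obstacle: the argument is a routine transfer of well-charted reasoning. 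The only delicate points are the reliance on the \emph{uniform} Potter bounds of Condition~A1(ii) — used both to pass from $V$ to $U_m$ and to handle the scaling $s_k\to2$, which is not exactly equal to $2$, rather than a fixed ratio — and the minor bookkeeping around the ceiling $[\cdot]$ and the unit index shift in the order-statistic representation.
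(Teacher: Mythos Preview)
Your argument is correct and is the classical Pickands-type consistency proof transported to the i.i.d.\ sample $\{M^{(m)}_i\}_{i=1}^k$ with common distribution $F^m$: regular variation of the tail quantile function $U_m$, the standard intermediate order statistic lemma $M^{(m)}_{k-j_k:k}/U_m(k/j_k)\conv{P}1$, and locally uniform $RV_\gamma$ for the deterministic ratio. Nothing is missing.

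The paper proceeds differently. Rather than $U_m=(1/\overline{F^m})^{\leftarrow}$, it works throughout with $V=(1/(-\log F))^{\leftarrow}$ and the exact representation $M^{(m)}_{i:k}\id V(mZ_{i:k})$ in terms of unit Fr\'echet order statistics $Z_{i:k}$. The key device is the shift $Z\mapsto Z+\tfrac12$, which (Lemma~\ref{Lem:UnitFrechetOS}) makes the tail of $Z+\tfrac12$ match the standard Pareto tail to second order, with a vanishing $O(x^{-3})$ remainder. This, together with a uniform invariance principle for the Fr\'echet tail quantile process (Lemma~\ref{Lem:UnitFrechetStdProcess}), is then used to expand $\log M^{(m)}_{k-[\theta_k/4]:k}-\log M^{(m)}_{k-[\theta_k/2]:k}$ directly. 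For mere consistency this machinery is heavier than your route; its payoff is that the same Fr\'echet/half-shift framework is reused verbatim for the asymptotic normality of the H2 estimator (Theorem~\ref{Thm:AN}) and drives the second-order analysis in Appendix~\ref{App:CondB2}. Your approach buys a shorter, self-contained proof of Theorem~\ref{Thm:ConsistencyNaive}; the paper's approach buys a single representation that threads through all the later results.
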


The tentative estimator $\hat{\gamma}(\theta_k, k)$  reflects a fairly simple construct that can be made much more general. The next result, whose proof is part of Section \ref{Sec:Proofs}, furnishes an anchor-step theorem towards this aim.

\begin{lem}\label{Lem:UnitFrechetOS}
Let $Z_{1:k} \leq Z_{2:k}\leq \ldots \leq Z_{k:k}$ be the order statistics of the sample $(Z_1, Z_2, \ldots, Z_k)$ of i.i.d. unit Fr\'echet random variables with distribution function $e^{-1/z}$, $z \geq 0$. Suppose $(\theta_k)$ is a sequence of positive integers such that $\theta_k \rightarrow\infty$ and $\theta_k=o(k)$, as $k \rightarrow \infty$. Then, the following hold:
\begin{itemize}
    \item[(i)] $\displaystyle{ \sqrt{\theta_k} \, \Bigl\{ \frac{\theta_k}{k}  \bigl(Z_{k - \theta_k :k} + 1/2 \bigr)-1 \Bigr\} \conv{d} \, N(0,1) }$, as $k \rightarrow \infty$.
     \item[(ii)] With $M^{(m)}_{1:k} \leq M^{(m)}_{2:k}\leq \ldots \leq M^{(m)}_{k:k} $, the order statistics associated with the BM defined in \eqref{eq:BMdef}, for every $m \in \field{N}$, we have the approximation in distribution:
     \begin{equation*}
         \Bigl\{  M^{(m)}_{k-i:k} \Bigr\}_{i=0}^{\theta_k} \mathop{\approx}^d \,  \Bigl\{ V\Bigl( m \exp\bigl\{1/Z_{i+1:k} \bigr\} -\frac{m}{2}\Bigr) \Bigr\}_{i=0}^{\theta_k}.
     \end{equation*}
\end{itemize}
\end{lem}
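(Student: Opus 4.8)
The plan is to reduce both parts to elementary facts about uniform (equivalently, standard exponential) order statistics, using the unit‑Fréchet tail quantile function $U(t):=-1/\log(1-1/t)$ as the bridge. Throughout I would use that a unit Fréchet $Z$ satisfies $Z\stackrel{d}{=}1/E$ with $E$ standard exponential and $Z\stackrel{d}{=}-1/\log W$ with $W$ uniform on $(0,1)$, so the order statistics transfer: $Z_{r:k}\stackrel{d}{=}-1/\log W_{r:k}$. The one algebraic fact that explains both the offset $+1/2$ in (i) and the offset $-m/2$ inside $V(\cdot)$ in (ii) is the expansion $U(t)=t-\tfrac12-\tfrac1{12t}+O(t^{-2})$, i.e.\ $U(t)+\tfrac12=t+O(1/t)$ as $t\to\infty$.

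For (i), the reflection $w\mapsto1-w$ on a uniform sample gives the exact identity $Z_{k-\theta_k:k}\stackrel{d}{=}U\!\bigl(1/W_{\theta_k+1:k}\bigr)$ with $W_{\theta_k+1:k}\sim\mathrm{Beta}(\theta_k+1,k-\theta_k)$. A direct computation gives $\mathbb E[W_{\theta_k+1:k}^{-1}]=k/\theta_k$ \emph{exactly} and $\mathrm{Var}(W_{\theta_k+1:k}^{-1})\asymp k^2/\theta_k^3$; combined with $W_{\theta_k+1:k}^{-1}\stackrel{d}{=}\Gamma_{k+1}/\Gamma_{\theta_k+1}$ and a Lindeberg/Lyapunov CLT for the two independent partial sums (or any standard intermediate–order‑statistics CLT), this yields $\sqrt{\theta_k}\bigl(\tfrac{\theta_k}{k}W_{\theta_k+1:k}^{-1}-1\bigr)\conv{d}N(0,1)$. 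Finally, writing $Z_{k-\theta_k:k}+\tfrac12=W_{\theta_k+1:k}^{-1}+\bigl(U(t)+\tfrac12-t\bigr)\big|_{t=W_{\theta_k+1:k}^{-1}}$ and noting the second summand is $O_{\mathbb P}\bigl(W_{\theta_k+1:k}\bigr)=O_{\mathbb P}(\theta_k/k)$, the asserted limit is inherited from that of $\tfrac{\theta_k}{k}W_{\theta_k+1:k}^{-1}$. The offset $+1/2$ is exactly what makes the leading deterministic term equal to $1$, through $\mathbb E[W_{\theta_k+1:k}^{-1}]=k/\theta_k$; the one place care is needed is checking that the residual $U(t)-t$ nonlinearity stays negligible after the $\tfrac{\theta_k}{k}$ and $\sqrt{\theta_k}$ rescalings.

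For (ii), I would first record the exact distributional identity $M^{(m)}\stackrel{d}{=}V(mZ)$ for unit Fréchet $Z$: since $M^{(m)}\stackrel{d}{=}(F^m)^{\leftarrow}(U)$ with $U$ uniform, $(F^m)^{\leftarrow}(u)=F^{\leftarrow}(u^{1/m})=V(-m/\log u)$, and $-1/\log U$ is unit Fréchet, the identity follows at once. Monotonicity of $V$ transfers this jointly to the upper order statistics, $\{M^{(m)}_{k-i:k}\}_{i=0}^{\theta_k}\stackrel{d}{=}\{V(m\,Z_{k-i:k})\}_{i=0}^{\theta_k}$. Then, exactly as in (i), $\{Z_{k-i:k}\}_{i=0}^{\theta_k}\stackrel{d}{=}\{U(1/W_{i+1:k})\}_{i=0}^{\theta_k}$ by reflection, while $\{e^{1/Z_{i+1:k}}\}_{i=0}^{\theta_k}\stackrel{d}{=}\{1/W_{i+1:k}\}_{i=0}^{\theta_k}$ directly; realising the two laws being compared as pushforwards of one and the same uniform order‑statistics vector, it remains to compare $V\bigl(mU(1/W_{i+1:k})\bigr)$ with $V\bigl(m/W_{i+1:k}-m/2\bigr)$ pathwise. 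By the expansion of $U$ their arguments differ by $m\bigl(U(t)+\tfrac12-t\bigr)\big|_{t=1/W_{i+1:k}}=O(m\,W_{i+1:k})$, which tends to $0$ uniformly over $i\le\theta_k$ because $W_{\theta_k+1:k}=O_{\mathbb P}(\theta_k/k)\to0$; as both arguments diverge, the uniform bounds in Condition A1 give $V(mU(1/W_{i+1:k}))/V(m/W_{i+1:k}-m/2)\to1$ uniformly in $i\le\theta_k$, which is the precise content of the claimed approximation in distribution.

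The genuine obstacle, common to both parts, is the same and relatively mild: once everything is in terms of $U(1/W_{\cdot:k})$, one must quantify the error of replacing $U(t)$ by $t$ (equivalently $U(t)+\tfrac12$ by $t$) and show it is of smaller order than the $1/\sqrt{\theta_k}$ fluctuation scale in (i), and asymptotically immaterial once passed through $V$ and made uniform over the top $\theta_k+1$ order statistics in (ii). The two quantile representations, the Beta/Gamma moment identities, and the CLT for the leading linear statistic are all routine.
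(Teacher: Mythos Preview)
Your proof is correct and follows essentially the same route as the paper: both hinge on the expansion $U(t)+\tfrac12=t+O(1/t)$ (equivalently, the paper's tail expansion $1-e^{-1/(x-1/2)}=x^{-1}+O(x^{-3})$) to transfer the standard-Pareto intermediate order statistic CLT to the shifted Fr\'echet statistic in (i), and on the exact identity $M^{(m)}\stackrel{d}{=}V(mZ)$ together with $Y\stackrel{d}{=}e^{1/Z}$ for (ii). Your explicit coupling on a single uniform order-statistics vector in (ii) and the Beta/Gamma moment computation in (i) merely spell out what the paper packages into a citation of Lemma~2.4.10 in de~Haan and Ferreira (2006).
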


For the proof of Theorem \ref{Thm:ConsistencyNaive}, we will need a result akin to Lemma \ref{Lem:UnitFrechetOS}(i) but stated with greater generality. Lemma \ref{Lem:UnitFrechetStdProcess} fulfils this aim of strengthening Lemma \ref{Lem:UnitFrechetOS} so as to inherit more in the form of an invariance principle like those typically sought in empirical processes theory for ascertaining the proper rate of convergence to a Donsker-type limit in the uniform sense.
\begin{lem}\label{Lem:UnitFrechetStdProcess}
Let $Z_1, Z_2, \ldots, Z_k, \ldots $ be i.i.d. random variables from a unit Fr\'echet distribution with d.f. $\exp(-x^{-1})$, $ x\geq 0$. Suppose $\theta_k \in (0, k]$ is such that $\theta_k \rightarrow\infty$ and $\theta_k/ k\rightarrow 0$, as $k \rightarrow \infty$. Then, with a sufficiently rich probability space, there exists a sequence of Brownian motions $\bigl\{ W_k(s)\bigr\}_{s \geq 0}$ such that, for each $\varepsilon >0$,
\begin{equation}\label{UnitFrecheQuantileProcess}
    \suprem{0 < s \leq 1} s^{1/2 + \varepsilon} \, \biggl| \sqrt{\theta_k} \Bigl( \frac{\theta_k s}{k} \exp\bigl\{Z^{-1}_{[\theta_k s]+ 1:k}\bigr\} -1 \Bigr) - \frac{W_k(s)}{s}\biggr| = o_p(1).
\end{equation}
\end{lem}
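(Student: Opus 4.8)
The plan is to reduce \eqref{UnitFrecheQuantileProcess} to a weighted uniform approximation for the empirical quantile process of i.i.d.\ standard uniform random variables, for which a Koml\'os--Major--Tusn\'ady-type strong approximation is available. First I would transform to the uniform scale: if $U_i := e^{-1/Z_i}$ then $U_1, \ldots, U_k$ are i.i.d.\ uniform on $(0,1)$, and $Z^{-1}_{[\theta_k s]+1:k} = -\log U_{[\theta_k s]+1:k}$, so that $\exp\{Z^{-1}_{[\theta_k s]+1:k}\} = 1/U_{[\theta_k s]+1:k}$. Writing $U_{j:k}$ for the order statistics, the quantity inside the supremum becomes $\sqrt{\theta_k}\bigl( \tfrac{\theta_k s}{k}\, U_{[\theta_k s]+1:k}^{-1} - 1\bigr)$. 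Near the left endpoint, $U_{j:k}$ is approximately $j/k$, so this measures the fluctuation of the lower tail of the empirical quantile function at levels of order $\theta_k/k$; the factor $s^{1/2+\varepsilon}$ is exactly the Chibisov--O'Reilly weight needed to control those fluctuations uniformly down to $s$ near $0$.

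Next I would invoke a strong approximation for the uniform empirical (or quantile) process: on a suitably rich probability space there is a sequence of Brownian bridges $B_k$, equivalently Brownian motions $W_k$, with $\sup_{1/k \le t \le 1} |{\sqrt{k}\,(U_{[kt]:k} - t)} - B_k(t)| = O_p(k^{-1/2}\log k)$, together with the corresponding weighted bound $\sup_{0<t\le 1} t^{-1/2-\varepsilon}|\sqrt{k}(U_{[kt]+1:k}-t) - B_k(t)| = O_p(k^{-\varepsilon})$ or $o_p(1)$ after the usual adjustments for the extreme-value region (cf.\ Cs\"org\H{o}--Horv\'ath, or the in-probability version used in de Haan--Ferreira Chapter~2). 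Restricting to $t = \theta_k s/k$ with $0<s\le 1$ and rescaling, and using the scaling invariance $B_k(cs) \overset{d}{=} \sqrt{c}\,W_k(s)$ of the bridge near $0$ (equivalently replacing $B_k$ by the Brownian motion $W_k$, since $t=\theta_k s/k \to 0$), one converts $\sqrt{k}(U_{[\theta_k s]+1:k} - \theta_k s/k)$ into $\sqrt{\theta_k/k}\cdot(\text{bridge term})$, which after dividing by $\theta_k s /k$ and multiplying by $\sqrt{\theta_k}$ produces precisely $-W_k(s)/s$ at leading order. The sign and the asymptotic equivalence of $U^{-1}_{[\theta_k s]+1:k}$-fluctuations with $-U_{[\theta_k s]+1:k}$-fluctuations follow from a first-order Taylor expansion of $u\mapsto u^{-1}$ about $t$, whose remainder is of smaller order uniformly on the relevant range by the same weighted bound applied to $U_{[\theta_k s]+1:k}/t \to 1$.

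The last step is to absorb all remainder terms into $o_p(1)$ uniformly against the weight $s^{1/2+\varepsilon}$. There are three sources of error: the strong-approximation error (order $k^{-\varepsilon}\to 0$ under the weighting, since $\theta_k/k\to 0$), the discretisation error from replacing $\theta_k s/k$ by $([\theta_k s]+1)/k$ (order $1/\sqrt{\theta_k}$ after the normalisation, hence $o_p(1)$), and the Taylor-remainder error from the $u\mapsto u^{-1}$ linearisation. I expect the \emph{main obstacle} to be the uniform control of this last term down to $s$ of order $1/\theta_k$: one needs a lower bound on $U_{[\theta_k s]+1:k}$ that holds uniformly, i.e.\ an inequality of the type $\inf_{0<s\le 1}\, s^{-1}\, U_{[\theta_k s]+1:k}\,(k/\theta_k) \ge c > 0$ with high probability, which is a standard but delicate consequence of the weighted empirical-process bound (a form of the ``Wellner inequality'' $\sup_s |t/U_{[kt]:k}| = O_p(1)$ near the origin). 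Granting that, the Taylor remainder is dominated by $\sqrt{\theta_k}\cdot(\text{squared fluctuation}) \asymp (\text{fluctuation})\cdot s^{-1/2}$-type terms that are killed by the weight $s^{1/2+\varepsilon}$, and the proof closes. Throughout I would lean on Lemma~2.4.10 of \citet{deHaanFerreira2006} — already used for Lemma~\ref{Lem:UnitFrechetOS}(i) — as the scalar prototype, upgrading it to the functional statement exactly as one upgrades a CLT to a Donsker/strong-approximation statement.
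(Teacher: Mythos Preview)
Your proposal is sound in its logic and would close, but it takes a considerably longer route than the paper. The crucial shortcut you miss is that the transformation $Y_i:=\exp\{1/Z_i\}$ lands \emph{exactly} on the standard Pareto distribution (with tail $1\wedge y^{-1}$), not merely approximately; under the decreasing map $z\mapsto e^{1/z}$ one has $Y_{k-[\theta_k s]:k}=\exp\{Z^{-1}_{[\theta_k s]+1:k}\}$, so the process in \eqref{UnitFrecheQuantileProcess} is literally the Pareto tail-quantile process. More importantly, Lemma~2.4.10 of \citet{deHaanFerreira2006} is \emph{not} a scalar prototype that needs upgrading to a Donsker-type statement: it already \emph{is} the weighted uniform approximation $\sup_{0<s\le 1} s^{1/2+\varepsilon}\bigl|\sqrt{\theta_k}\bigl((\theta_k s/k)Y_{k-[\theta_k s]:k}-1\bigr)-s^{-1}W_k(s)\bigr|=o_p(1)$ for Pareto order statistics (with $\gamma=1$, $\theta_k$ in place of their $k$). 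The paper's proof is therefore essentially a one-liner invoking that lemma directly, with an added remark that the law of the iterated logarithm for $W$, namely $\sup_{0<s\le \theta_k^{-1}}|s^{-1/2+\varepsilon}W(s)|\to 0$ a.s., handles the extreme region near the sample maximum.

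By contrast, your route through the uniform scale $U_i=e^{-1/Z_i}$, KMT strong approximation of $\sqrt{k}(U_{[kt]+1:k}-t)$, Brownian rescaling at level $t=\theta_k s/k$, and Taylor linearisation of $u\mapsto u^{-1}$ with a Wellner-type lower bound on $U_{[\theta_k s]+1:k}$, is effectively a re-derivation of Lemma~2.4.10 from first principles. That is a perfectly valid and more self-contained argument---and your identification of the ``main obstacle'' (uniform control of the Taylor remainder via $\inf_{0<s\le 1}(k/\theta_k)s^{-1}U_{[\theta_k s]+1:k}\ge c>0$ w.h.p.) is exactly right---but it buys nothing new here beyond what citing the existing lemma already gives.
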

\begin{proof}
    It suffices to employ Lemma 2.4.10 of \citet{deHaanFerreira2006} with $\gamma=1$ and $\theta_k$ in place of $k$ therein, whilst heeding the law of iterated logarithm in that, for $\varepsilon >0$,
    \begin{equation*}
        \lim_{\theta_k^{-1} \downarrow 0}\,  \suprem{0 < s \leq \theta_k^{-1}} \bigl| s^{-1/2 + \varepsilon} W(s)\bigr| = 0 \quad a.s.,
    \end{equation*}
    for ensuring tightness (i.e. that no probability escapes bounds) near the maximum $\exp\{Z^{-1}_{ 1:k}\}$.
\end{proof}

With $s$ on a compact interval bounded away from zero, Lemma \ref{Lem:UnitFrechetStdProcess} sets the scene to establishing that the Gaussian process as the limit in \eqref{UnitFrecheQuantileProcess} remains unaltered in the space $D(0,T])$, $T^{-1} >1$, in the midst of replacing the basic process $\bigl\{\exp\{Z^{-1}_{[\theta_k s]+ 1:k}\}\bigr\}$ with the relevant quantile process $\{M^{(m)}_{k- [\theta_k s]:k}\}_{s \in (0,1]}$ describing intermediate BM and upwards to near extreme BM.\\

\begin{pfofThm}~\ref{Thm:ConsistencyNaive}:
We begin by showing that $M^{(m)}_{k - [\theta_k/2]:k}/ V\Bigl( m \frac{2k}{\theta_k} - \frac{m}{2} \Bigr) \conv{P} 1$, as $k \rightarrow \infty$. From part (i) of Lemma \eqref{Lem:UnitFrechetOS} in conjunction with the continuous mapping theorem (or Mann-Wald theorem) it follows that: 
\begin{equation*}
    M^{(m)}_{k - [\theta_k/2]:k} \id V\Bigl( m (Z_{k - [\theta_k/2]:k} + 1/2) - m/2 \Bigr)\\
                           = V\Bigl( m \frac{2k}{\theta_k} - \frac{m}{2} \Bigr) \bigl( 1+ o_p(1) \bigr).
\end{equation*}
A direct consequence of Condition A1 is that $V(\infty) = \lim_{t\rightarrow \infty} V(t)= \infty$, through which statement we obtain $M^{(m)}_{k - [\theta_k/2]:k} \conv{P} \infty$, provided the intermediate sequence $\theta_{k}\rightarrow \infty$ and $\theta_{k}/k=o(1)$, as $k \rightarrow \infty$. The asymptotic development for the na\"{i}ve estimator \eqref{eq:EstNaive} will arise from Lemma \ref{Lem:UnitFrechetOS} (ii) as follows. By virtue of Condition A1,
\begin{eqnarray*}
 & &   \log M^{(m)}_{k - [\theta_k/4]:k} - \log M^{(m)}_{k - [\theta_k/2]:k}\\
   &\stackrel{d}{\approx}& \,\log V \Bigl( m \exp\bigl\{Z^{-1}_{[\theta_k/4]+1:k} \bigr\} - \frac{m}{2}\Bigr) - \log V \Bigl( m \exp\bigl\{Z^{-1}_{[\theta_k/2]+1:k} \bigr\} - \frac{m}{2}\Bigr)\\
 &=& \gamma \log \biggl( \frac{\exp\bigl\{Z^{-1}_{[\theta_k/4]+1:k} \bigr\} }{\exp\bigl\{Z^{-1}_{[\theta_k/2]+1:k} \bigr\} }\biggr)+ \gamma \log \biggl( \frac{1- 1/\bigl( 2\exp\bigl\{Z^{-1}_{[\theta_k/4]+1:k} \bigr\} \bigr)}{1- 1/\bigl( 2\exp\bigl\{Z^{-1}_{[\theta_k/2]+1:k} \bigr\}\bigr)} \biggr)\bigl( 1+ o_p(1) \bigr).
\end{eqnarray*}
Finally, application of Lemma \ref{Lem:UnitFrechetStdProcess} with $s=1/4, 1/2$ ascertains that
\begin{equation*}
    \log M^{(m)}_{k - [\theta_k/4]:k} - \log M^{(m)}_{k - [\theta_k/2]:k} = \gamma \log \Bigl( 2  + O_p\bigl( \frac{1}{\sqrt{\theta_k}} \bigr) \Bigr) + o_p(1).
\end{equation*}
\end{pfofThm}

\section{Hybrid-Hill (H2) estimator based on $k_0$ tail-related BM}\label{Sec:HybridHill}

In this section, we investigate the asymptotic properties of the \citet{Hill1975} estimator, now cast at the intersection of BM and POT approaches for inference on a positive extreme value index $\gamma>0$. As with the cursory estimator \eqref{eq:EstNaive}, this  configures a semi-parametric approach,  amenable to a wide range of applications for its generality and data-driven simplicity.
Indeed, a key implication of Condition B, and one of impending practical consequence, is that
\begin{equation}
    \lim_{t\rightarrow \infty} \int_{1}^{\infty}\frac{1-F^m\bigl( x V(m(t-1/2)) \bigr) }{ 1 - F^m \bigl( V(m(t-1/2)) \bigr)}\, \frac{dx}{x} = \gamma,
\end{equation}
for $\gamma >0$. (cf. Lemma \ref{Lem:IntRVofFm} in Appendix \ref{App:CondB1}). By analogous reasoning to that which have led to the cursory estimator in Section \ref{Sec:CoEstim}, we proceed by replacing $t$ with $k/\theta_k \rightarrow \infty$, as $k \rightarrow \infty$, while also replacing the d.f. $F^m$ of the $k$ independent  BM with its empirical counterpart, namely the tail empirical distribution function given by
\begin{equation*}
    \overline{F_{k}^{m}}(x):= 1-F_{k}^{m}(x) = \frac{1}{k} \sumab{i=1}{k} \one_{\{M_{i}^{(m)} > x \}},
\end{equation*}
for all $x \in \real$. Adopting $\widehat{V}(m(k/\theta_k-1/2))=M^{(m)}_{k-k_0:k}$,  the hybrid-Hill estimator \eqref{eq:HibHill} thus arises from the integral (linear) functional representation:
\begin{equation*}
    \int_1^{\infty} \frac{1-F_{k}^{m}\bigl( x \widehat{V}(m(k/\theta_k-1/2)) \bigr) }{ 1-F_{k}^{m}\bigl( \widehat{V}(m(k/\theta_k-1/2)) \bigr)}\, \frac{dx}{x} = \int_{M^{(m)}_{k-k_0:k}}^{\infty} \frac{ \overline{F_{k}^{m}}(u)}{ \overline{F_{k}^{m}}\bigl(M_{k-k_0:k}\bigl) } \, \frac{du}{u} = \frac{k}{k_0} \int_{M^{(m)}_{k-k_0:k}}^{\infty} \bigl( 1-F_{k}^{m}(u)\bigr) \, \frac{du}{u}. 
\end{equation*}
Next, we establish its consistency and asymptotic normality in the framework issued by  Condition B and its second order strengthening borne out by A2.

\subsection{Consistency}

\begin{thm}\label{Thm:ConsistencyHybHill}
For any $m \geq 1$, assume that the distribution function $F^m$ underlying the BM sample satisfies Condition B. Suppose a sequence of positive constants $(\vartheta_k)$   satisfying $\sqrt{k}\vartheta^{-3}_k \rightarrow 0$, as $k \rightarrow \infty$.

\noindent If $k_0=k_0(k) \rightarrow \infty$ is such that $\vartheta_k\, k_0 = o(\sqrt{k})$, then
    \begin{equation*}
         \hat{\gamma}^{\textrm{H2}}\equiv \hat{\gamma}^{\textrm{H2}} (k_0, k) \conv{P} \gamma \,,
    \end{equation*}
    as $k \rightarrow \infty$.
\end{thm}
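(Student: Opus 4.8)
I would start from the linear--functional form of the estimator recorded just before the statement,
\[
\hat\gamma^{\textrm{H2}}(k_0,k)=\frac{k}{k_0}\int_{M^{(m)}_{k-k_0:k}}^{\infty}\bigl(1-F_{k}^{m}(u)\bigr)\,\frac{du}{u}
=\int_{1}^{\infty}\frac{1-F_{k}^{m}\bigl(M^{(m)}_{k-k_0:k}\,v\bigr)}{1-F_{k}^{m}\bigl(M^{(m)}_{k-k_0:k}\bigr)}\,\frac{dv}{v},
\]
the second equality using $\overline{F_{k}^{m}}(M^{(m)}_{k-k_0:k})=k_0/k$ almost surely (no ties, $F$ continuous). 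The idea is to isolate a \emph{deterministic} part, obtained by replacing the empirical tail $\overline{F_{k}^{m}}$ by the population tail $\overline{F^{m}}$, which carries the domain-of-attraction behaviour, from a \emph{stochastic} fluctuation part; throughout write $q_k:=V\bigl(m(k/k_0-1/2)\bigr)$ for the deterministic intermediate quantile tracked by the random anchor $M^{(m)}_{k-k_0:k}$.

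\textbf{Anchoring.} I would take $\theta_k=k_0$, which is admissible since $k_0\to\infty$ and $k_0=o(k)$ — the latter being forced by $\vartheta_k k_0=o(\sqrt k)$ together with $\sqrt k\,\vartheta_k^{-3}\to0$. Lemma~\ref{Lem:UnitFrechetOS}(i) then gives $(k_0/k)\bigl(Z_{k-k_0:k}+1/2\bigr)\conv{P}1$; fed into the probability-integral identity $M^{(m)}_{k-k_0:k}\id V\bigl(m(Z_{k-k_0:k}+1/2)-m/2\bigr)$ together with the local uniform convergence of $V\in RV_\gamma$ from Condition~A1 and $\lim_{t\to\infty}V(t)=\infty$, this yields $M^{(m)}_{k-k_0:k}\conv{P}\infty$ and $M^{(m)}_{k-k_0:k}/q_k\conv{P}1$. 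The same manipulation with $\overline{F^{m}}(V(t))=1-e^{-m/t}$ gives $\tfrac{k}{k_0}\,\overline{F^{m}}(q_k)\to1$ — here the $1/2$-shift is pulling its weight, reducing the crude $O(k_0/k)$ discrepancy to $O\bigl((k_0/k)^2\bigr)$ — and hence $\tfrac{k}{k_0}\,\overline{F^{m}}\bigl(M^{(m)}_{k-k_0:k}\bigr)\conv{P}1$.

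\textbf{Deterministic part.} Replacing $\overline{F_{k}^{m}}$ by $\overline{F^{m}}$ in the displayed ratio, the resulting $\int_{1}^{\infty}\overline{F^{m}}(M^{(m)}_{k-k_0:k}v)/\overline{F^{m}}(M^{(m)}_{k-k_0:k})\,dv/v$ converges in probability to $\int_{1}^{\infty}v^{-1/\gamma}\,dv/v=\gamma$: Condition~B supplies the pointwise limit along the almost surely divergent argument $M^{(m)}_{k-k_0:k}$ (for $v\ge1$ the multiplied argument stays $\ge M^{(m)}_{k-k_0:k}\conv{P}\infty$, all the accompanying Potter-type bound for $F^{m}$ requires), and that bound furnishes a $dv/v$-integrable majorant, so dominated convergence applies. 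Multiplying by $\tfrac{k}{k_0}\,\overline{F^{m}}(M^{(m)}_{k-k_0:k})\conv{P}1$ shows the deterministic surrogate of $\hat\gamma^{\textrm{H2}}$ converges to $\gamma$. It matters that only the limit relation and its uniform bound, not a rate, are used here, since Condition~B may converge arbitrarily slowly (cf.\ Lemma~\ref{Lem:2ndOrdLogShift}) — but for consistency no rate is needed.

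\textbf{Stochastic fluctuation, and the main obstacle.} What remains is
\[
\frac{k}{k_0}\int_{M^{(m)}_{k-k_0:k}}^{\infty}\bigl(\overline{F_{k}^{m}}(u)-\overline{F^{m}}(u)\bigr)\,\frac{du}{u}\;\conv{P}\;0 .
\]
Here I would pass to the Fréchet representation of Lemma~\ref{Lem:UnitFrechetOS}(ii), writing $\hat\gamma^{\textrm{H2}}=\int_{0}^{1}\bigl[\log V(me^{1/Z_{[k_0 s]+1:k}}-\tfrac{m}{2})-\log V(me^{1/Z_{k_0+1:k}}-\tfrac{m}{2})\bigr]\,ds+o_p(1)$, the $o_p(1)$ absorbing both the Riemann-sum discretisation and the Pareto-tail approximation; Condition~A1(ii) reduces the integrand to $\gamma\bigl(Z^{-1}_{[k_0 s]+1:k}-Z^{-1}_{k_0+1:k}\bigr)(1+o_p(1))$ once the negligible $m/2$ is dropped. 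I would then split the integral at a level of order $1/\vartheta_k$: on the upper part, the weighted invariance principle of Lemma~\ref{Lem:UnitFrechetStdProcess}, through its $s^{1/2+\varepsilon}$ weighting, controls the Gaussian fluctuation and all accumulated errors at order $o_p(1)$, while the integral of $-\log s$ over the upper part converges to $\int_0^1(-\log s)\,ds=1$ and delivers the main term $\gamma$; on the stub below — the top $\asymp k_0/\vartheta_k$ block maxima, sample maximum $M^{(m)}_{k:k}$ included — a crude bound suffices because both its length and the number of observations it involves vanish after rescaling. \emph{The hard part} is precisely this uniform handling up to the sample maximum: away from it the tail-empirical (quantile) process is routine, but near it it fails to be uniformly integrable against $dv/v$ without the weighting of Lemma~\ref{Lem:UnitFrechetStdProcess}, and striking the balance between that weight and the admissible growth of $k_0$ — encoded through $(\vartheta_k)$ by $\sqrt k\,\vartheta_k^{-3}\to0$ and $\vartheta_k k_0=o(\sqrt k)$ — is what makes every term negligible.
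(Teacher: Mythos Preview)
Your route is genuinely different from the paper's. The paper never touches the tail--empirical integral or a population/empirical decomposition; it stays entirely on the quantile side. From $M^{(m)}_{k-i:k}\id V(mZ_{k-i:k})$ it writes each log-spacing as $I_i-II_i+III$, where $II_i,III$ absorb the $1/2$-shift and are shown $o_p(1)$ via the elementary expansion $\log V(m(t+\tfrac12))-\log V(mt)=\tfrac{\gamma}{2}(t+\tfrac12)^{-1}+o(1)$, and $I_i=\log V(m(Z_{k-i:k}+\tfrac12))-\log V(m(Z_{k-k_0:k}+\tfrac12))$. Potter bounds from Condition~A1(ii) then sandwich $\tfrac{1}{k_0}\sum I_i-\gamma$ between $\pm\delta\cdot\tfrac{1}{k_0}\sum\log\bigl((Z_{k-i:k}+\tfrac12)/(Z_{k-k_0:k}+\tfrac12)\bigr)\pm\log(1\pm\varepsilon)$. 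The decisive tool is Lemma~\ref{Lem:BasicEmpProcess} --- not Lemma~\ref{Lem:UnitFrechetStdProcess} --- which replaces $Z_{k-i:k}+\tfrac12$ by $1/(1-U^{*}_{k-i:k})$ within $o(k^{-1/2})$; \emph{this} is where the $(\vartheta_k)$ hypotheses enter, since that lemma needs $\sqrt{k}\,\vartheta_k^{-3}\to0$ and its supremum range forces the intermediate constraint on $k_0$ (cf.\ Remark~\ref{Rem:Discussion}). After the switch, Malmquist's representation plus the LLN give $\tfrac{1}{k_0}\sum\log\bigl((1-U^{*}_{k-k_0:k})/(1-U^{*}_{k-i:k})\bigr)\to1$, and consistency follows.

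Your deterministic-part argument via Condition~B and dominated convergence is correct and rather cleaner than anything the paper does (the paper uses only A1). But your ``stochastic fluctuation'' section does not carry out the programme you announce: having set out to show $\tfrac{k}{k_0}\int(\overline{F_k^{m}}-\overline{F^{m}})\,du/u\to0$, you instead rewrite the \emph{whole} estimator in quantile form and argue for $\hat\gamma^{\textrm{H2}}\to\gamma$ from scratch --- so the split is redundant. More substantively, Lemma~\ref{Lem:UnitFrechetStdProcess} contains no $(\vartheta_k)$, and your proposed cut at level $1/\vartheta_k$ with a ``crude bound'' on the stub is only gestured at; you never make precise how the specific pair of hypotheses $\sqrt{k}\,\vartheta_k^{-3}\to0$ and $\vartheta_k k_0=o(\sqrt{k})$ is consumed. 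In the paper they are exactly what Lemma~\ref{Lem:BasicEmpProcess} requires for the Fr\'echet-to-Pareto replacement; along your route they would need an independent justification (or might turn out to be stronger than necessary), and that link is the missing piece.
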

To prove Theorem \ref{Thm:ConsistencyHybHill} we first need a distribution-free result amenable to the universality class of distributions induced by the new extreme value condition \eqref{eq:RVofFm} encompassed in B. Not only does it set the foundations to the asymptotic properties of the H2 estimator, Lemma \ref{Lem:BasicEmpProcess} also draws the forethought on other possible unified approaches to the modelling of extreme value features in higher dimensions (recall discussion in Section \ref{Sec:ennes}).

\begin{lem}\label{Lem:BasicEmpProcess}
   For every $k\geq 2$ and constants $\vartheta_k > 1/2$, there exists a probability space where both the uniform empirical distribution function
\begin{equation*} 
	\frac{1}{k} \sumab{i=1}{k}\one_{\{U_{i}^{*}\leq s\}}
\end{equation*}
and the sample $ \{Z_{1},\ldots,Z_{k}\}$ of i.i.d unit Fr\'echet random variables with common distribution function $\exp(z^{-1})$, $z\geq0$, are defined and are such that 
    \begin{equation*}
            \suprem{(\vartheta_k (k+1))^{-1}< s \leq 1- \exp\{-(\vartheta_k \log k)^{-1} \} } \sqrt{k} \Bigl| Z_{k-[ks]:k}-\Bigl(\frac{1}{1-U^{*}_{k-[ks]:k}}- \frac{1}{2}\Bigr)\Bigl|\conv{a.s.} 0,
    \end{equation*}
provided that $\vartheta_k$ satisfies $\sqrt{k}\vartheta_k^{-3} \rightarrow 0$ and $k^{-1/2}\log \vartheta_k \rightarrow 0$, as $k \rightarrow \infty$.
\end{lem}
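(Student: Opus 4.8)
The plan is to realise the coupling explicitly rather than via an abstract embedding. On a probability space carrying i.i.d.\ uniforms $U_1,\dots,U_k$ — so that $k^{-1}\sum_{i=1}^{k}\one_{\{U_i\le s\}}$ is the empirical distribution function in the statement, and I write $U_i^{*}:=U_i$ — set $Z_i:=-1/\log U_i$. Then $Z_i$ is unit Fr\'echet, and since $u\mapsto -1/\log u$ is strictly increasing on $(0,1)$ the two samples are comonotone, which yields the \emph{exact} identity
\[
 Z_{k-[ks]:k}=-\frac{1}{\log U^{*}_{k-[ks]:k}}\qquad\text{for every }s\text{ and }k .
\]
This collapses the lemma into (a) a deterministic estimate for $u\mapsto -1/\log u$ as $u\uparrow 1$, and (b) an almost-sure bound on how far the relevant upper uniform order statistics sit from $1$.

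For (a) I would use the sharp expansion behind \eqref{eq:taylor} (equivalently, the inequality of \citet{Alzer1990}): as $u\uparrow 1$,
\[
 -\frac{1}{\log u}=\frac{1}{1-u}-\frac12-\frac{1-u}{12}+O\bigl((1-u)^{2}\bigr),
\]
so there are $\delta_0\in(0,1)$ and $C<\infty$ with $\bigl|-1/\log u-1/(1-u)+1/2\bigr|\le C\,(1-u)$ on $(1-\delta_0,1)$. It is exactly the centring by $1/2$ that kills the $O(1)$ discrepancy and leaves a remainder of first order in $1-u$, the quantile-side echo of the vanishing second-order term recorded after \eqref{eq:taylor}.

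Feeding (a) into the identity, on the (eventually almost-sure) event that $U^{*}_{k-[ks]:k}>1-\delta_0$ for every admissible $s$ — which holds because $[ks]$ runs only up to $[ks^{+}]$ with $s^{+}:=1-\exp\{-(\vartheta_k\log k)^{-1}\}\to 0$, so all order statistics involved tend to $1$, and it never reaches the sample maximum since $[ks]\ge 1$ throughout — the supremum in the lemma is bounded above by $C\,\sqrt{k}\,\bigl(1-U^{*}_{k-[ks^{+}]:k}\bigr)$, using that $j\mapsto 1-U^{*}_{k-j:k}$ is nondecreasing. Everything thus reduces to $\sqrt{k}\,\bigl(1-U^{*}_{k-J_k:k}\bigr)\to0$ a.s.\ with $J_k:=[ks^{+}]$, which I would extract from the classical almost-sure behaviour of uniform order statistics — the R\'enyi representation, a Shorack--Wellner maximal inequality for $\sup_{1\le j\le J_k}\bigl|1-U^{*}_{k-j:k}-(j+1)/(k+1)\bigr|$, or the strong approximation of the uniform quantile process (Lemma~2.4.10 of \citet{deHaanFerreira2006}), i.e.\ the toolkit already invoked in Lemmas~\ref{Lem:UnitFrechetOS} and \ref{Lem:UnitFrechetStdProcess}. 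The two growth hypotheses enter precisely here: $\sqrt{k}\,\vartheta_k^{-3}\to0$ is what makes the deterministic contribution $\sqrt{k}\,(J_k+1)/(k+1)$ together with its fluctuation term decay, and $k^{-1/2}\log\vartheta_k\to0$ controls the band nearest the largest admissible statistic $U^{*}_{k-1:k}$, where concentration fails and one leans instead on a law-of-iterated-logarithm bound for the quantile process — the same device used for tightness near the maximum in Lemma~\ref{Lem:UnitFrechetStdProcess}.

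The hard part is step (b): assembling a \emph{single}, uniform, almost-sure bound valid across the whole index band $1\le j\le J_k$, whose top end (near the sample maximum) carries heavy multiplicative fluctuations while its bottom end is governed by the mean. The reduction itself — comonotone coupling, the exact order-statistic identity, the Alzer expansion — is routine; the real work is in discretising $k$ along a suitable subsequence, running a Borel--Cantelli argument there, and transferring back by monotonicity in $k$, with the bookkeeping arranged so that the stated rates on $\vartheta_k$ are exactly what the bound consumes.
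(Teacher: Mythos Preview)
Your reduction via the comonotone coupling $Z_i=-1/\log U^*_i$ and the Alzer expansion is clean and correct up to the point where you claim the rates close. They do not. With $J_k=[ks^{+}]$ and $s^{+}=1-\exp\{-(\vartheta_k\log k)^{-1}\}\sim(\vartheta_k\log k)^{-1}$, the deterministic part you isolate is
\[
\sqrt{k}\,\frac{J_k+1}{k+1}\;\sim\;\frac{\sqrt{k}}{\vartheta_k\log k},
\]
and this is \emph{not} controlled by $\sqrt{k}\,\vartheta_k^{-3}\to 0$. Take $\vartheta_k=k^{1/5}$: both hypotheses of the lemma hold ($\sqrt{k}\,\vartheta_k^{-3}=k^{-1/10}\to 0$ and $k^{-1/2}\log\vartheta_k\to 0$), yet $\sqrt{k}/(\vartheta_k\log k)=k^{3/10}/\log k\to\infty$. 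Since your Taylor remainder is exactly of order $(1-u)/12$, this is not slack in the bound but the actual size of the quantity you must kill; the direct quantile-level argument therefore cannot deliver the lemma under the stated conditions.

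This is precisely why the paper does \emph{not} stay at the quantile level. It passes, via Vervaat's lemma, to the tail empirical process and compares the two indicator processes $\one_{\{U_i\ge 1-\exp(-1/(t-1/2))\}}$ and $\one_{\{(1-U_i)^{-1}\ge t\}}$ there. At that level the deterministic discrepancy is $a(t)-t^{-1}=O(t^{-3})$ (equation \eqref{eq:expansion}), and it is this cubic decay---not the linear $O(1-u)$ you obtain---that produces the exponent $3$ in the hypothesis $\sqrt{k}\,\vartheta_k^{-3}\to 0$. Almost-sure convergence is then secured by a KMT/Mason-type strong approximation of the uniform empirical process together with Borel--Cantelli, and only at the very end does the paper return to the quantile formulation, using Vervaat once more combined with the local modulus of continuity of the Brownian bridge. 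In short: your coupling and expansion are right, but the detour through the empirical process is not optional---it is where the two extra powers of $t$ are gained, and your attribution of the $\vartheta_k^{-3}$ condition to the quantile-level mean $\sqrt{k}(J_k+1)/(k+1)$ is the gap.
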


\begin{rem}
	Condition $\sqrt{k}\vartheta_k^{-3}=o(1)$ imposes a lower bound to the sequence $(\vartheta_k)$ since it implies $\vartheta_k \rightarrow \infty$. Such a requirement can however be made weaker by imposing $\vartheta_k \log k \rightarrow \infty$ instead, aimed at establishing the above convergence to zero with probability tending to one, but with no guarantees that the process will inherit at least the same from the basic Fr\'echet quantile process than Lemma \ref{Lem:BasicEmpProcess} already accomplishes. 
\end{rem}
The proof of Lemma \ref{Lem:BasicEmpProcess} is more of a probabilistic nature and, on account of this, is deferred to Section \ref{Sec:Proofs}.\\

\begin{pfofThm}~\ref{Thm:ConsistencyHybHill}:
Let  $Z_{1:k} \leq Z_{2:k}\leq \ldots \leq Z_{k:k}$ be the order statistics of the random sample $(Z_{1},\ldots,Z_{n})$ comprised of i.i.d. unit  Fr\'{e}chet random variables. In this instance, we develop a useful expansion for the regularly varying function $V$ intervening in the equality in distribution: \begin{equation*}
   \bigl\{ M^{(m)}_{k-i:k}\bigr\}_{i=0}^{k-1} \id \Bigl\{ V\Bigl(mZ_{k-i:k}\Bigr) \Bigr\}_{i=0}^{k-1}.
 \end{equation*}
Since $V \in RV_{\gamma}$, for some index $\gamma >0$, then so too $V(g(t))\in RV_{\gamma}$ with any positive function $g\in RV_{1}$, i.e. $V(g)$ satisfies Condition A1 locally uniformly. Specifically, we have for any $m\geq 2$ that
\begin{equation*}
  \frac{V\bigl(m(t+\frac{1}{2})\bigl)}{V(mt)} \Bigl(1-\frac{1}{2\bigl(t+\frac{1}{2}\bigl)}\Bigl)^{\gamma} =\frac{\Bigl(1-\frac{1}{2\bigl(t+\frac{1}{2}\bigl)}\Bigl)^{\gamma}V\bigl(m(t+\frac{1}{2})\bigl) }{V\bigl(m(t+\frac{1}{2}) (1-\frac{1/2}{t+1/2} )\bigl)}
  \arrowf{t} 1,
\end{equation*}
i.e., as $t \rightarrow \infty$,
\begin{equation}\label{eq:Aux2}
    \log V\Bigl(m\bigr(t+\frac{1}{2}\bigl)\Bigl)-\log V(mt)=-\gamma\log\Bigl(1-\frac{1}{2\bigl(t+\frac{1}{2}\bigl)}\Bigl)+o(1) =\frac{\gamma}{2}\Bigl(t+\frac{1}{2}\Bigl)^{-1} +o(t^{-1})+o(1).
\end{equation}
In line with the above, we write the log-spacings of the $k_0$-th BM as
 \begin{align*}
 	& \log M^{(m)}_{k-i:k} - \log M^{(m)}_{k-k_{0}:k}
 	\id \log V\bigl(mZ_{k-i:k}\bigl)-\log V\bigl(mZ_{k-k_{0}:k}\bigl)\\
   =& \log V\bigl(mZ_{k-i:k}+\frac{m}{2}\bigl)-\log V\bigl(mZ_{k-k_{0}:k}+\frac{m}{2}\bigl) -\Bigl[\log V\bigl(mZ_{k-i:k}
   +\frac{m}{2}\bigl)-\log V\bigl(mZ_{k-i:k}\bigl)\Bigl]\\
   & \quad +\Bigl[\log V\bigl(mZ_{k-k_{0}:k}+\frac{m}{2}\bigl)-\log V\bigl(mZ_{k-k_{0}:k}\bigl)\Bigl]\\
   =:&\;  I_{i} -II_{i} + III. 
\end{align*}
Heeding the intermediate sequence $k_{0}= k_0(k)$, we have from (\ref{eq:Aux2}) in conjunction with Lemma \ref{Lem:UnitFrechetOS} (i) (the latter ensures $Z_{k-k_{0}:k}\conv{P}\infty$) that both $II$ and $III$ reduce to a $o_{p}(1)$-term. The term $I_{i}$, for every $i=0, 1, \ldots, k_0-1$, summarises the meaningful random element to the asymptotics of the H2 estimator. Applying Condition A1(i), it follows that
\begin{equation*}
   I_i =\log V\Bigl(\frac{Z_{k-i:k}+\frac{1}{2}}{Z_{k-k_{0}:k}+\frac{1}{2}}m\bigl(Z_{k-k_{0}:k}+\frac{1}{2}\bigl)\Bigl)-\log V\Bigl(mZ_{k-k_{0}:k}+\frac{m}{2}\Bigl)
   =\gamma\log\Bigl(\frac{Z_{k-i:k}+\frac{1}{2}}{Z_{k-k_{0}:k}+\frac{1}{2}}\Bigl)(1+o_{p}(1)).
\end{equation*}
There are accompanying uniform bounds, encompassing Condition A1, which we now put into use for making the latter $o_{p}(1)$-term more precise: for arbitrary $\varepsilon$, $\delta>0$, there exists $t_{0}=t_{0}(\varepsilon,\delta)$ such that for $x\geq 1$ and $t\geq t_{0}$,
\begin{equation*}
     \log(1-\varepsilon)+(\gamma-\delta)\log x<\log V \bigl(tx\bigl) - \log V\bigl(t\bigr)<(\gamma+\delta)\log x + \log(1+\varepsilon).
\end{equation*}
We are going to employ these inequalities with $x =(Z_{k-i:k}+\frac{1}{2})/(Z_{k-k_{0}:k}+\frac{1}{2})$ -- which is greater than $1$ with probability $1$ -- and $t= m\bigl(Z_{k-k_{0}:k}+\frac{1}{2}\bigl)\conv{p} \infty$ (via Lemma \ref{Lem:UnitFrechetOS} part (i)), so that eventually, 
\begin{equation*}
     \log(1-\varepsilon)-\delta\log\Bigl(\frac{Z_{k-i:k}+\frac{1}{2}}{Z_{k-k_{0}:k}+\frac{1}{2}}\Bigl) <I_{i}-\gamma\log\Bigl(\frac{Z_{k-i:k}+\frac{1}{2}}{Z_{k-k_{0}:k}+\frac{1}{2}}\Bigl)<\delta\log\Bigl(\frac{Z_{k-i:k}+\frac{1}{2}}{Z_{k-k_{0}:k}+\frac{1}{2}}\Bigl)+ \log(1+\varepsilon);
\end{equation*}
 followed by application of the primary Lemma \ref{Lem:BasicEmpProcess} in order to get the upper bound
\begin{equation}\label{eq:expandConsistency}
     \frac{1}{k_{0}}\sum_{i=0} ^{k_{0}-1}I_{i}-\gamma <\gamma\Bigl(\frac{1}{k_{0}}\sum_{i=0} ^{k_{0}-1}\log\frac{1-U^{*}_{k-k_{0}:k}}{1-U^{*}_{k-i:k}}-1\Bigl)+\frac{\delta}{k_{0}}\sum_{i=0} ^{k_{0}-1}\log\frac{1-U^{*}_{k-k_{0}:k}}{1-U^{*}_{k-i:k}}+\log (1+\varepsilon),
\end{equation}
for an intermediate sequence $k_0 \rightarrow \infty$ and $k_0/k \rightarrow 0$, as $k \rightarrow \infty$, in the conditions stipulated in the theorem (see Remark \ref{Rem:Discussion} below for a brief note in this respect).
A completely analogous lower bound guarantees tightness. Owing to Malmquist's result \citep[see][p.41]{Reiss1989}, application of the law of large numbers leads to
\begin{equation*}
  \frac{1}{k_{0}}\sum_{i=0} ^{k_{0}-1}\log\frac{1-U^{*}_{k-k_{0}:k}}{1-U^{*}_{k-i:k}} \, \id \, \frac{1}{k_{0}} \sum_{i=0} ^{k_{0}-1} \bigl( - \log (1- U^{*}_{i:k_0}) \bigr) \conv{P}\intab{1}{\infty}\log y\, \frac{dy}{y^{2}}=1 \end{equation*}
as $k_0 \rightarrow \infty$. Therefore, from \eqref{eq:expandConsistency} we arrive at 
\begin{equation*}
    \Bigl|\frac{1}{k_{0}}\sum_{i=0} ^{k_{0}-1}I_{i}-\gamma\Bigl|<\delta \pm \log(1+\varepsilon')+o_{p}(1).
\end{equation*}
This is in keeping with Lemma 3.2.3 in \citet{deHaanFerreira2006}, in which result both $II_i$ and $III$ are also deemed lower-order terms. Hence, the proof is complete.
\end{pfofThm}
\begin{rem}\label{Rem:Discussion}
The slightly stronger constraint $\vartheta_k k_0 / \sqrt{k} \rightarrow 0$ has been put in place to ensure that the actual upper bounds $(\vartheta_k)$ demanded, on the one hand by Lemma \ref{Lem:BasicEmpProcess} (that   $k^{-1/2} \log \vartheta_k \rightarrow 0$), and by the consistency argument in the theorem on the other hand (specifically that $k_0/k = o(1 /(\vartheta_k \log k))$) are not contradictory of one another. Noticeably, the second restriction implies $k_0/k =o(1)$.
\end{rem}

\subsection{Asymptotic normality}

This section is primarily aimed at the asymptotic normality for the proposed hybrid-Hill estimator. A secondary objective is to lay the groundwork for the reduced-bias estimation developed later on in Section \ref{Sec:RBH2}. The exposition in the proofs as part of this section is deliberately aimed at making it obvious that our methods apply to virtually any existing semi-parametric methodology for estimation and testing concerning extreme domains of attraction \citep[see][]{Embrechtsetal1997,deHaanFerreira2006,Neves2009}. Combined with the universality class induced by relation \eqref{eq:RVofFm}, Lemma \ref{Lem:BasicEmpProcess} turns out to be pivotal towards delivering this aim: it warrants the notorious brevity of this section as the proofs are made purposefully similar to those in \citet[][Chapter 3]{deHaanFerreira2006} from the point of fusion between BM and POT domains.

\begin{thm}\label{Thm:AN}
   Assume Conditions B and A2 and define $g(t):= t-1/2$. Then, for all $m= 1, \ldots, n$, there exists a function $\tilde{A}$, ultimately of constant sign and satisfying $\lim_{t \rightarrow \infty}\tilde{A}(t)=0$, such that
 \begin{equation}\label{2RVlogVg}
 \limit{t} \frac{\log V(mg(tx))-\log V(mg(t))-\gamma\log x}{\tilde{A}(mt)}= 
 \frac{x^{\tilde{\rho}}-1}{\tilde{\rho}},
\end{equation}
for all $x>0$, locally uniformly. Moreover, $|\tilde{A}| \in RV_{\tilde{\rho}}$ with  $\tilde{\rho}:=\max( \rho,-1)$. Let $k_0=k_0(k)$ in the conditions of Theorem \ref{Thm:ConsistencyHybHill} be an upper intermediate sequence, i.e. $k_0 \rightarrow \infty$ and $k_0/k \rightarrow 0$, as $k\rightarrow\infty$, satisfying
\begin{equation*}
     \lim\limits_{k\to \infty} \sqrt{k_0}\tilde{A}\Bigl(\frac{n}{k_0}\Bigl)=\lambda \in \real.
\end{equation*}
Then, the H2 estimator is asymptotically normal, namely
\begin{equation*}
         \sqrt{k_0}\bigl(\hat{\gamma}^{\textrm{H2}}-\gamma \bigl)\conv{d}N\Bigl(\frac{\lambda}{1-\tilde{\rho}},\gamma^2\Bigl).
\end{equation*}
\end{thm}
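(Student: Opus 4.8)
The plan is to split the statement into two parts: first establish the second-order expansion \eqref{2RVlogVg} for $\log V\circ m g$, identifying $\tilde\rho=\max(\rho,-1)$ and the auxiliary function $\tilde A$; then feed this into the stochastic analysis of $\hat\gamma^{\textrm{H2}}$, exploiting the representation already set up in the proof of Theorem \ref{Thm:ConsistencyHybHill} together with Lemma \ref{Lem:BasicEmpProcess}.

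\textbf{Step 1 (deterministic second-order part).} Starting from Condition A2 in the log-form \eqref{eq:2ndRVLogV}, I would write
$\log V(mg(tx))-\log V(mg(t)) = [\log V(mg(tx))-\log V(mtx)] - [\log V(mg(t))-\log V(mt)] + [\log V(mtx)-\log V(mt)]$.
The middle bracketed differences are handled by the expansion \eqref{eq:Aux2} already proved (with $t$ replaced by $tx$ and $t$): each equals $\tfrac{\gamma}{2}(mt)^{-1}$-type terms, i.e.\ they are $O(t^{-1})$ and contribute a term regularly varying of index $-1$. The last bracket, by Condition A2, equals $A(mt)\,\frac{x^{\rho}-1}{\rho}(1+o(1))$. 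Hence the whole difference minus $\gamma\log x$ is a sum of a term of order $A(mt)\in RV_\rho$ and a term of order $t^{-1}\in RV_{-1}$ (more precisely $\tfrac{\gamma}{2}(m t)^{-1}(x^{-1}-1)$ after a careful Taylor expansion of the $g$-corrections). Normalising by whichever of these dominates — that is, by a function $\tilde A(mt)$ with $|\tilde A|\in RV_{\tilde\rho}$, $\tilde\rho=\max(\rho,-1)$ — gives the limit $\frac{x^{\tilde\rho}-1}{\tilde\rho}$; when $\rho=-1$ the two contributions are of the same order and one checks the constants combine into a single $\frac{x^{-1}-1}{-1}$ shape (absorbing constants into $\tilde A$). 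Local uniformity is inherited from the local uniformity in A2 and in \eqref{eq:Aux2}. This is essentially a bookkeeping exercise, and Lemma \ref{Lem:2ndOrdLogShift} in Appendix \ref{App:CondB2} is presumably exactly this computation, so I would cite it.

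\textbf{Step 2 (stochastic part).} Using $\{M^{(m)}_{k-i:k}\}\id\{V(mZ_{k-i:k})\}$ and the decomposition $I_i-II_i+III$ from the proof of Theorem \ref{Thm:ConsistencyHybHill}, the terms $II_i, III$ are $O_p(Z_{k-k_0:k}^{-1}\vee Z_{k-i:k}^{-1})$ and — since $Z_{k-k_0:k}\sim k/k_0$ by Lemma \ref{Lem:UnitFrechetOS}(i) and $\sqrt{k_0}\,k_0/k\to 0$ — they are $o_p(1/\sqrt{k_0})$ uniformly, hence negligible after multiplication by $\sqrt{k_0}$ and averaging. For the main term, write $\frac1{k_0}\sum_{i=0}^{k_0-1} I_i - \gamma$ and apply the uniform second-order bound \eqref{inequality} (transported to $V\circ mg$ via Step 1): with $x=(Z_{k-i:k}+\tfrac12)/(Z_{k-k_0:k}+\tfrac12)$ and $t=m(Z_{k-k_0:k}+\tfrac12)\sim n/k_0$, we get
$I_i = \gamma\log\frac{Z_{k-i:k}+1/2}{Z_{k-k_0:k}+1/2} + \tilde A(n/k_0)\,\frac{x^{\tilde\rho}-1}{\tilde\rho} + o_p(\tilde A(n/k_0))$, uniformly in $i$. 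Then invoke Lemma \ref{Lem:BasicEmpProcess} to replace $Z_{k-[ks]:k}$ by $(1-U^*_{k-[ks]:k})^{-1}-\tfrac12$ with error $o_p(1/\sqrt k)$, reducing everything to the classical Hill/ Pareto order-statistics analysis: $\frac1{k_0}\sum\gamma\log\frac{1-U^*_{k-k_0:k}}{1-U^*_{k-i:k}} - \gamma$ is, by Malmquist and the standard tail-empirical-process CLT, asymptotically $\frac{\gamma}{\sqrt{k_0}}N(0,1)$; and $\frac1{k_0}\sum_i \tilde A(n/k_0)\frac{x^{\tilde\rho}-1}{\tilde\rho}\to \tilde A(n/k_0)\int_1^\infty \frac{y^{\tilde\rho}-1}{\tilde\rho}\,\frac{dy}{y^2} = \tilde A(n/k_0)\cdot\frac{1}{1-\tilde\rho}$. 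Multiplying by $\sqrt{k_0}$ and using $\sqrt{k_0}\tilde A(n/k_0)\to\lambda$ yields the limit $N\!\big(\tfrac{\lambda}{1-\tilde\rho},\gamma^2\big)$.

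\textbf{Main obstacle.} The genuinely delicate point is the uniform control of the bias term over the whole range of order statistics $i=0,\dots,k_0-1$: the argument $x=(Z_{k-i:k}+\tfrac12)/(Z_{k-k_0:k}+\tfrac12)$ ranges from near $1$ up to $Z_{1:k}/(Z_{k-k_0:k}+\tfrac12)$, which is $O_p(k/k_0)$ and unbounded, so the bound \eqref{inequality} must be applied with the Potter-type factor $x^{\tilde\rho+\delta}$ and one must verify that $\tilde A(n/k_0)\cdot \frac1{k_0}\sum_i x_i^{\tilde\rho+\delta}$ stays bounded — this is where $\tilde\rho\le 0$ and the integrability of $y^{\tilde\rho+\delta-2}$ near infinity (for small $\delta$) are used, and where Lemma \ref{Lem:BasicEmpProcess}'s control near the sample maximum is essential to handle the top few order statistics. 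Secondary care is needed to confirm that the conditions linking $k_0$, $k$, $\vartheta_k$ in Theorem \ref{Thm:ConsistencyHybHill} are compatible with $\sqrt{k_0}\tilde A(n/k_0)\to\lambda$; I would remark that since $k_0=o(k)$ and $|\tilde A|\in RV_{\tilde\rho}$ with $\tilde\rho<0$ this is a standard bias-variance trade-off selection and no incompatibility arises.
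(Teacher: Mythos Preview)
Your proposal is essentially correct and shares the same skeleton as the paper's proof, but the order of operations differs in Step~2. The paper explicitly \emph{abandons} the $I_i - II_i + III$ decomposition from the consistency proof and instead invokes Lemma~\ref{Lem:BasicEmpProcess} \emph{first}, replacing $Z_{k-i:k}$ by $Y_{k-i:k}-\tfrac12$ (standard Pareto minus $\tfrac12$) so that $\log V(mZ_{k-i:k}) \approx \log V(mg(Y_{k-i:k}))$ directly. The uniform second-order bound attached to \eqref{2RVlogVg} is then applied in one stroke with $t = Y_{k-k_0:k}$ and $x = Y_{k-i:k}/Y_{k-k_0:k}$, and the remainder is packaged into a term $\Upsilon_k(i)$ controlled by the Potter-type factor $x^{\tilde\rho+\varepsilon}$. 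You instead keep the decomposition, dismiss $II_i,\,III$ as $o_p(k_0^{-1/2})$ (which is legitimate, since the conditions of Theorem~\ref{Thm:ConsistencyHybHill} force $k_0^{3/2}/k\to 0$), and then apply the second-order bound to $I_i$. Both routes are valid; the paper's is cleaner because the $g$-shift is already absorbed into \eqref{2RVlogVg}, so no separate accounting of $II_i$, $III$ is required.

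One small slip: to cast $I_i = \log V(m(Z_{k-i:k}+\tfrac12)) - \log V(m(Z_{k-k_0:k}+\tfrac12))$ in the form $\log V(mg(tx))-\log V(mg(t))$ you need $t = Z_{k-k_0:k}+1$ and $x = (Z_{k-i:k}+1)/(Z_{k-k_0:k}+1)$, not the $+\tfrac12$ versions; this is asymptotically immaterial but worth getting right. Your identification of the main obstacle --- uniform control of the Potter-weighted bias over the full range of order statistics via integrability of $y^{\tilde\rho+\delta-2}$ --- is exactly what the paper handles through its $\Upsilon_k$ term and the law of large numbers for $\tfrac{1}{k_0}\sum (Y_{k-i:k}/Y_{k-k_0:k})^{\tilde\rho+\varepsilon}$.
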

\begin{rem}
	The asymptotic normality of $\hat{\gamma}^{\textrm{H2}}$ does not depend explicitly on the block length $m$ since it only affects the second order convergence relating the approximation bias. 
\end{rem}

\begin{proof}
The first part of the theorem follows readily from Lemma \ref{Lem:2ndOrdLogShift} in Appendix \ref{App:CondB2}. Next, we somewhat departure from the line of proof for consistency of $\hat{\gamma}^{\textrm{H2}}$ (given in Theorem \ref{Thm:ConsistencyHybHill}) by shifting the focus from the prior Fr\'echet order statistics
%
%
to the dual representation in terms of Pareto supplied by Lemma \ref{Lem:BasicEmpProcess}. Loosely speaking, Lemma \ref{Lem:BasicEmpProcess} states that, from some given high threshold $t$ onwards, the conditional distribution function of the shifted standard Pareto random variable $Y-1/2$ is well approximated by the tail distribution function of the unit Fr\'echet distribution.

\noindent Let $Y_{1:k} \leq Y_{2:k} \leq \dots \leq Y_{k:k}$ be the order statistics from a sample of i.i.d standard Pareto random variables with tail d.f. given by $1 \wedge y^{-1}$. Using Lemma \ref{Lem:BasicEmpProcess}, for the same intermediate sequence $k_0 \in (0, k]$, we recast the above equality in distribution into the almost sure approximation,
\begin{equation*}
	\sqrt{k_0} (\hat{\gamma}^{\textrm{H2}} - \gamma)  \stackrel{a.s.}{\approx} \, \sqrt{k_0} \Bigl\{ \frac{1}{k_0}\sumab{i=0}{k_0-1} \log V  \Bigr(m\bigl(\frac{Y_{k-i:k} }{Y_{k-k_{0}:k}} Y_{k-k_{0}:k}-\frac{1}{2}\bigl)\Bigr) - \log V \Bigl( m \bigl(Y_{k-k_{0}:k} - \frac{1}{2}\bigl) \Bigr)- \gamma\Bigr\}.
\end{equation*}
The assumption in the theorem that $\vartheta_k k_0/\sqrt{k} \rightarrow 0$, as $k \rightarrow \infty$, implies $k_0/k \rightarrow 0$ whereby \ref{Lem:UnitFrechetOS} (ii) ensures that $(k_0/k) Y_{k-k_0:k} \conv{P}1$. Since $ \{Y_{k-k_0:k}\}$ is a non-decreasing sequence w.r.t. $k$, the convergence in probability $Y_{k-k_0:k} \conv{P} 1$ implies convergence almost surely.
Via analogous uniform bounds to \eqref{inequality} from Condition A2 now within the realm of \eqref{2RVlogVg}, i.e.
\begin{equation*}
 \suprem{x\geq 1} \, x^{-(\tilde{\rho}+\delta)} \biggl|\Bigl(\frac{\log V(mg(tx))-\log V(mg(t))-\gamma \log x}{\tilde{A}(mt)}-\frac{x^{\tilde{\rho}}-1}{\tilde{\rho}}\Bigr)\biggr|\leq\varepsilon \,,
\end{equation*}
we arrive at
\begin{eqnarray}
\nonumber	 & & \sqrt{k_0} \Bigl\{ \frac{1}{k_0}\sumab{i=0}{k_0-1} \gamma \log \frac{Y_{k-i:k} }{Y_{k-k_{0}:k}} -\gamma + \tilde{A}(m Y_{k-k_{0}:k}) \Bigl[ \frac{1}{k_0}\sumab{i=0}{k_0-1}  \frac{(Y_{k-i:k} /Y_{k-k_{0}:k})^{\tilde{\rho}}-1}{\tilde{\rho}}\\
\nonumber  & & \mbox{\hspace{9cm}}+ \frac{1}{k_0}\sumab{i=0}{k_0-1} \Bigl( \frac{Y_{k-i:k} }{Y_{k-k_{0}:k}} \Bigr)^{\tilde{\rho}+ \varepsilon} \Upsilon_k(i)   \Bigr] \Bigr\}\\
\nonumber & \leq & \sqrt{k_0} \Bigl\{ \gamma \Bigl(\frac{1}{k_0}\sumab{i=0}{k_0-1} \log \frac{Y_{k-i:k} }{Y_{k-k_{0}:k}} -1 \Bigr) + \tilde{A}(m Y_{k-k_{0}:k}) \Bigl[ \frac{1}{k_0}\sumab{i=0}{k_0-1}  \frac{(Y_{k-i:k} /Y_{k-k_{0}:k})^{\tilde{\rho}}-1}{\tilde{\rho}}\\
  & & \mbox{\hspace{6.5cm}}  + \max_{1\leq i \leq k_0} \bigl|\Upsilon_k(i-1)\bigr| \frac{1}{k_0}\sumab{i=0}{k_0-1} \Bigl( \frac{Y_{k-i:k} }{Y_{k-k_{0}:k}} \Bigr)^{\tilde{\rho}+ \varepsilon}    \Bigr] \Bigr\}, \label{eq:UBoundRand}
\end{eqnarray}
where, for every $i=0, 1, \ldots, k_0-1$,
\begin{equation*}
	\Upsilon_k(i):= \Bigl( \frac{Y_{k-i:k} }{Y_{k-k_{0}:k}} \Bigr)^{-\tilde{\rho}- \varepsilon}\Bigl( \frac{\log V  \bigl(m(Y_{k-i:k} -\frac{1}{2})\bigr) - \log V \bigl( m (Y_{k-k_{0}:k} - \frac{1}{2}) \bigr)}{\tilde{A}(m Y_{k-k_{0}:k})}- \frac{(Y_{k-i:k} /Y_{k-k_{0}:k})^{\tilde{\rho}}-1}{\tilde{\rho}} \Bigr)= o_p(1).
\end{equation*}
A similar lower bound can be readily established. The first term in \eqref{eq:UBoundRand} is $\sqrt{k_0}$-consistent and asymptotically normal by Lemma 3.2.3 of \citet{deHaanFerreira2006}. For the remainder terms attached to $\tilde{A}(m Y_{k-k_{0}:k})$, the law of large numbers ascertains the result in the theorem by similar steps as in the proof of Lemma 3.2.3 of \citet{deHaanFerreira2006}. In particular, for any arbitrarily small $c\geq 0$,
\begin{equation*}
	\frac{1}{k_0}\sumab{i=0}{k_0-1} \Bigl( \frac{Y_{k-i:k} }{Y_{k-k_{0}:k}} \Bigr)^{\tilde{\rho}+ c}  \conv{P} \intinf{1} y^{\tilde{\rho}+ c} \, \frac{dy}{y^2} = \frac{1}{1-\tilde{\rho} - c}\, ,
\end{equation*}
as $k_0 \rightarrow \infty$. It only remains to prove that
$
     \tilde{A}(mY_{k-k_0:k})/\tilde{A}\bigl(m\frac{k_0}{k}\bigl) \conv{P} 1.
$
Lemma \ref{Lem:UnitFrechetOS} (i) for $\theta_k=k_0$ in conjunction with the regular variation $\lim_{t \rightarrow \infty}\tilde{A}(tx)/\tilde{A}(t) = x^{\tilde{\rho}}$, for $x>0$ locally uniformly gives that
\begin{equation*}
        \Bigl|\tilde{A}(mY_{k-k_0:k})\Bigl| \,\approxid\,  \Bigl(\frac{k_0}{k}(Z_{k-k_{0}:k}+1/2)\Bigl)^{\tilde{\rho}} \Bigl|\tilde{A}\Bigl(m\frac{k}{k_0}\Bigl)\Bigl|(1+o_{p}(1))=\Bigl|\tilde{A}\Bigl(m\frac{k}{k_0}\Bigl)\Bigl|\Bigl(1+o_{p}\bigl(\frac{1}{\sqrt{k_0}} \bigr)\Bigr).
\end{equation*}
Hence the asymptotic expansion for the normalised H2 estimator:
\begin{equation}\label{eq:H2FullExpansion}
     \sqrt{k_0} (\hat{\gamma}^{\textrm{H2}}-\gamma)\id \ \gamma P_{k_0}+\frac{1}{1-\tilde{\rho}}\sqrt{k_0}\tilde{A}\Bigl(m\frac{k}{k_0}\Bigl)+o_{p}\Bigr(\sqrt{k_0}\tilde{A}\Bigl(m\frac{k}{k_0}\Bigl)\Bigr),
   \end{equation}
where $P_{k_0}$ denotes standard normal random variable. This expansion, with wide resonance in POT-related estimation methodology \citep[cf.][Chp. 3]{deHaanFerreira2006}, concludes the proof.
\end{proof}

\section{Optimal choices for the top fraction $k_0/k$ in H2 estimation} \label{Sec:OptimalFraction}

In this section, we explore the optimal selection of $k_{0}=k_{0}(k)$ in the sense of minimising the asymptotic mean squared error (AMSE) of the H2 estimator. Under the conditions of Theorem \ref{Thm:AN}, we wish to minimise the AMSE$= E_{\infty}[k_0( \hat{\gamma}^{\textrm{H2}}(k_0,k)-\gamma)^2]$ with respect to $k_0$. This procedure will be hinged on the approximation originating from \eqref{eq:H2FullExpansion}:
 \begin{equation*}
              E\bigl[k_0( \hat{\gamma}^{\textrm{H2}}(k_0,k)-\gamma)^2\bigr]\, \approx \, \gamma^2 Var(P_{k_{0}}) + E^2\Bigl[P_{k_{0}}+ \frac{1}{1-\tilde{\rho}}\sqrt{k_0} \tilde{A}\bigl(m\frac{k}{k_0} \bigl)\Bigr] = \gamma^2 + \Bigl(\frac{\sqrt{k_0}}{1-\tilde{\rho}} \tilde{A}\bigl(m\frac{k}{k_0} \bigl)\Bigr)^2.
 \end{equation*}
In the subsequent search for a minimiser of $\sqrt{k_0} \tilde{A}\bigl(m\nicefrac{k}{k_0})$,
two cases are in focus which we address separately: the case of rapid convergence determined by $\tilde{\rho}= -1$ and the slower convergence sub-class of $-1< \tilde{\rho} \leq 0$. We note that within the former, the case $\tilde{\rho}=-1$ and $\rho =-1$ is approached by continuity (cf. Appendix \ref{App:CondB2}). The supplementary document to this paper lists a number of distributions and their indices of regular variation, which can serve as an aid to what comes next.
\begin{itemize}
    \item For $\tilde{\rho}=-1$, the deterministic component of the asymptotic bias is primarily determined by
\begin{equation*}
          b_k= \frac{\gamma}{4}\sqrt{k_0}\Bigl|\Bigl(\frac{k}{k_0}-\frac{1}{2}\Bigl)^{-1}\Bigr|,
\end{equation*}
and becomes negligible for $k_0 \ll k^{2/3}$. Given the wide range of distributions possessing $\tilde{\rho}= -1$, including Fr\'echet, Pareto, Cauchy as well as quite a few Burr distributions, this upper bound for enabling asymptotically null bias is in a certain sense distribution-free. A list of models with $\tilde{\rho}= -1$ is given in Table 1, in the supplement.

\item  For $-1<\tilde{\rho}\leq 0$, the interest is in finding the largest possible $k_0$ (hence minimising the variance) such that $\sqrt{k_0} \tilde{A}(n/k_0)$ is arbitrarily close to zero. We define $L(t):= t^{-\tilde{\rho}} \tilde{A}(t) \in RV_0$, whereby there exists a positive function $a$ such that $t^{-1}a(t) \in RV_{-1}$ and
\begin{equation*}
	L(t) = \intinf{t} \frac{a(s)}{s}\, ds.
\end{equation*}
Invoking Karamata's theorem for integration of regularly varying functions \citep[][Theorem B.1.5]{deHaanFerreira2006}, we find that
\begin{equation*}
	\frac{L(t)}{a(t)} = \frac{1}{t} \intinf{t} \frac{s^{-1}a(s)}{t^{-1}a(t)}\, ds \arrowf{t} \infty.
\end{equation*}
This implies that whenever the slowly-varying part of $\tilde{A}$ satisfies $L(t) \rightarrow c \in (0,\infty)$, minimisation through finding the zeros of its derivative is rendered useless due to the necessarily $a(t)=o(1)$. That is, $a(t) \ll L(t)$ meaning that the works on $a$ is not enough to keep $L$ in check, eventually. This partly explains the excessively large turning points reported in \citet[][Section 3]{CN2004}. Hence, we proceed via the direct method and write $t_k= k/k_0 \rightarrow \infty$ and $\sqrt{k_0} \tilde{A} (t_k) = \sqrt{k}\, t_k^{-1/2+ \tilde{\rho}}L(t_k)$. If $\lim_{t\rightarrow \infty}L(t)$ is positive and finite, then we may set $ \sqrt{k}\, t_k^{-1/2+ \tilde{\rho}} \rightarrow \beta \in [0,1)$ which, solving for $k_0$, yields $k_0 \sim \beta^{2/(1-2\tilde{\rho})}k^{1-1/(1-2\tilde{\rho})} \rightarrow \infty$. The role of the block size $m$ is that of a catalyst in the convergence whereby we can replace $k$ with $mk$ thus making it possible to conduct the H2 estimation on a wider range of $k_0$ in a optimal way. We note that if $\tilde{\rho}= -1$, we obtain $k_0 \sim (\beta k)^{2/3}$, the same power of $k$ that staves off the bias in the previous item. If $L(t) =o(1)$, as $t\rightarrow \infty$, then we seek the largest $k_0$ such that $t_k^{1/2+ \tilde{\rho}}/(mk)^{-1/2} \rightarrow 0$, i.e. $ k_0 \ll (mk)^{-2\tilde{\rho}/(1-2\tilde{\rho})}$. As it tallies  with the previously obtained $k_0 \ll (mk)^{2/3}$, it establishes continuity with the case of $\tilde{\rho}=-1$.
\end{itemize}

\section{Reduced-bias H2 estimation}\label{Sec:RBH2}

The primary goal in this section is to provide substantive evidence that the deterministic, second order, approximation bias embedded in the H2 estimator can be curtailed in a systematic manner. The cursory estimator, if used as a surrogate estimator for $\gamma>0$ that features in this approximation bias alone, is found largely useful to the reduced-bias H2 estimation that we carve out next. In particular, it will circumvent the usual requirement of having to conduct estimation of the second order elements at a much lower threshold, usually advised of the order $2k/\log \log k$.

Key to curbing the asymptotic bias of the H2 estimator is the worsened rate of convergence displayed in the second order relation \eqref{2RVlogVg}, whose second order parameter $\tilde{\rho}$ becomes trapped at $-1$ for a wide range of distributions $F$ possessing $V \in 2RV(\gamma, \rho)$, $\rho \leq 0$. We have opted not to delve further into the asymptotic normality since bias-reduction has become a research topic in its own right and straying away to such endeavour would lengthen the paper considerably and would risk detracting on its chief purpose, that  of blending the two (BM and POT) streams for inference on extremes.

Figures \ref{Fig:RBFrechetPareto025} to \ref{Fig:RBGEVGPD025} display the estimated bias and empirical mean squared error (MSE) of the H2 estimator in \eqref{eq:HibHill} and of its reduced-bias variants in both cases of $\tilde{\rho} = -1$ (labelled RBH2r) and $\tilde{\rho}> -1$ (labelled RBH2). For each model, the simulations essentially consisted of generating a sample of size $n=5000$ which is subsequently segmented into blocks of size $n=20$. Then H2's estimates are obtained for every increasing number $k_0$ of larger BM  whereas maximum likelihood (ML) estimation is computed with $k_0=k$, by assuming that the whole sample of BM conforms to the exact GEV-fit. In order to obtain estimates of H2 shorn of its bias, we employ the cursory estimator  $\hat{\gamma}(\theta_k, k)$  to the estimation of the unknowns embedded in $\tilde{A}(k/k_0)$ from \eqref{eq:H2FullExpansion} (specified in Appendix \ref{App:CondB2}). This task is much facilitated by the many parent distributions possessing $\tilde{\rho} = -1$. A number of these are shown in Table 1 as part of the accompanying supplement.

In all cases encompassing Figures \ref{Fig:RBFrechetPareto025} and \ref{Fig:RBBurr}, the ML estimator quite quickly gets overcome by a H2 estimator either in terms of bias or of MSE, or both. For the Burr distribution satisfying \eqref{2RVlogVg} with $\gamma=0.3$ and $\tilde{\rho}= -1$ (details in Table 2 in the supplement), the reduced-bias H2 estimation yields sustained gains in efficiency for a wide range of $k_0$. 
For enhanced clarity, we are also illustrating performance of the H2 estimator with the GEV and GP distributions, both having their second order parameter $\tilde{\rho}= -\gamma$ estimated with the cursory estimator. In spite of the MLE being at greater advantage in this setting, not least because it benefits from being invariant to a location-transform in the data, the reduced-bias RBH2 remains a convincing competitor for small values of $k_0$.

\begin{figure}
\begin{center}
\includegraphics[scale=0.4]{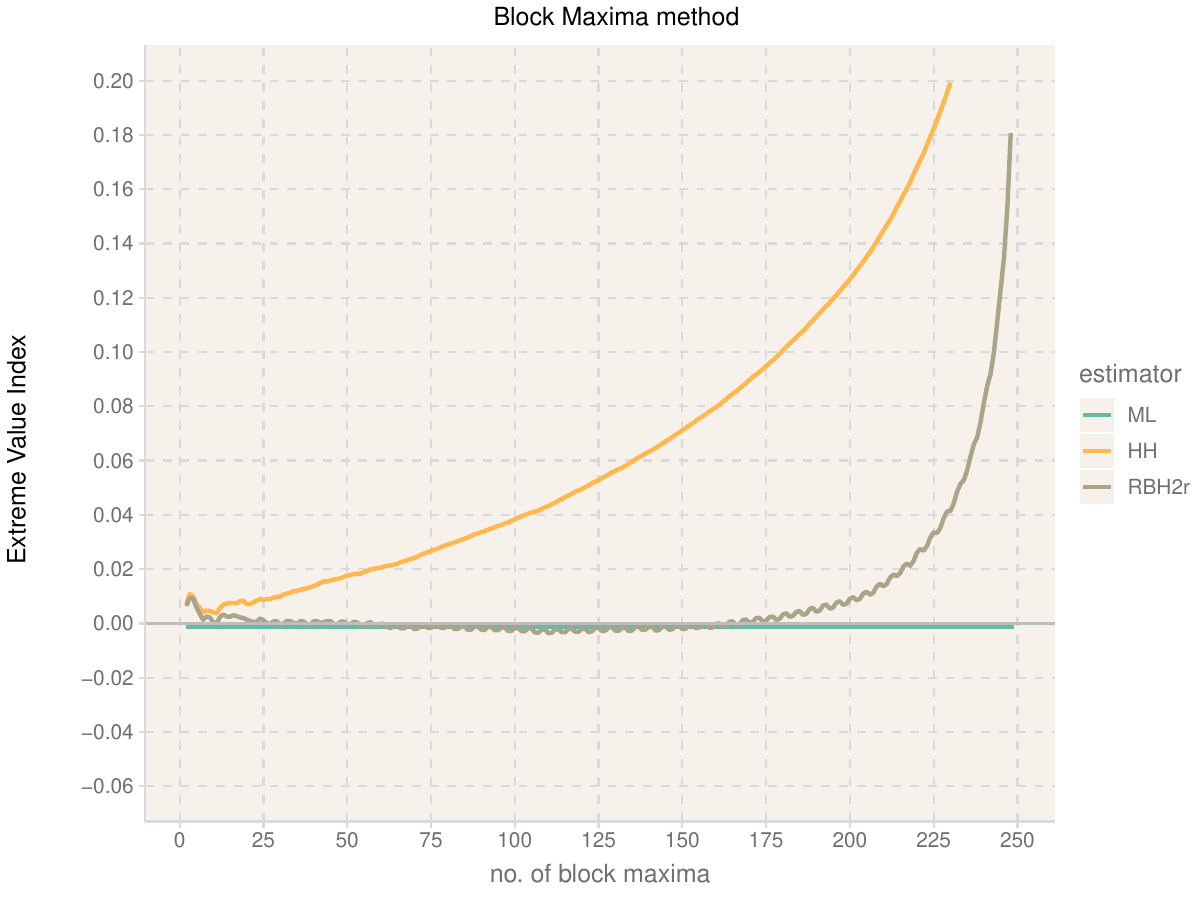} 
\includegraphics[scale=0.4]{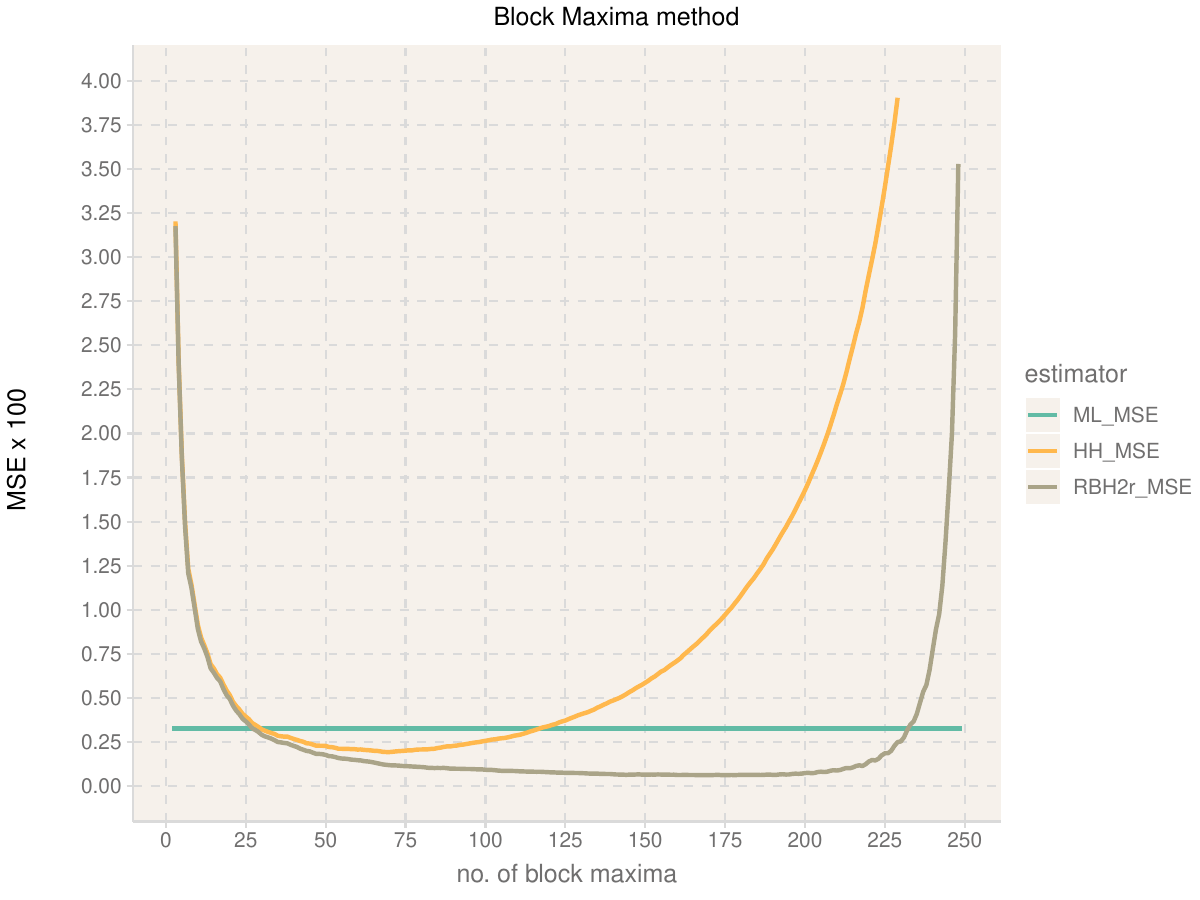}
\includegraphics[scale=0.4]{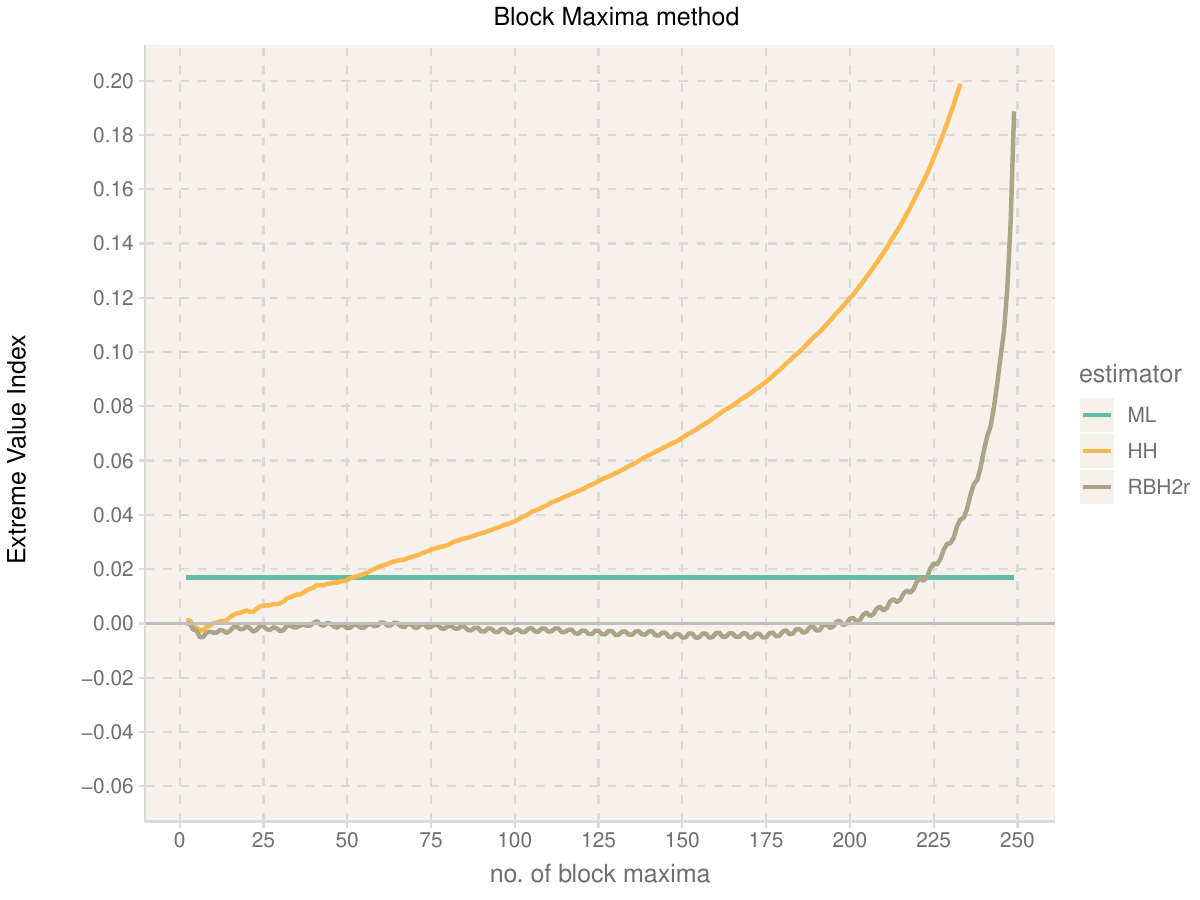} 
\includegraphics[scale=0.4]{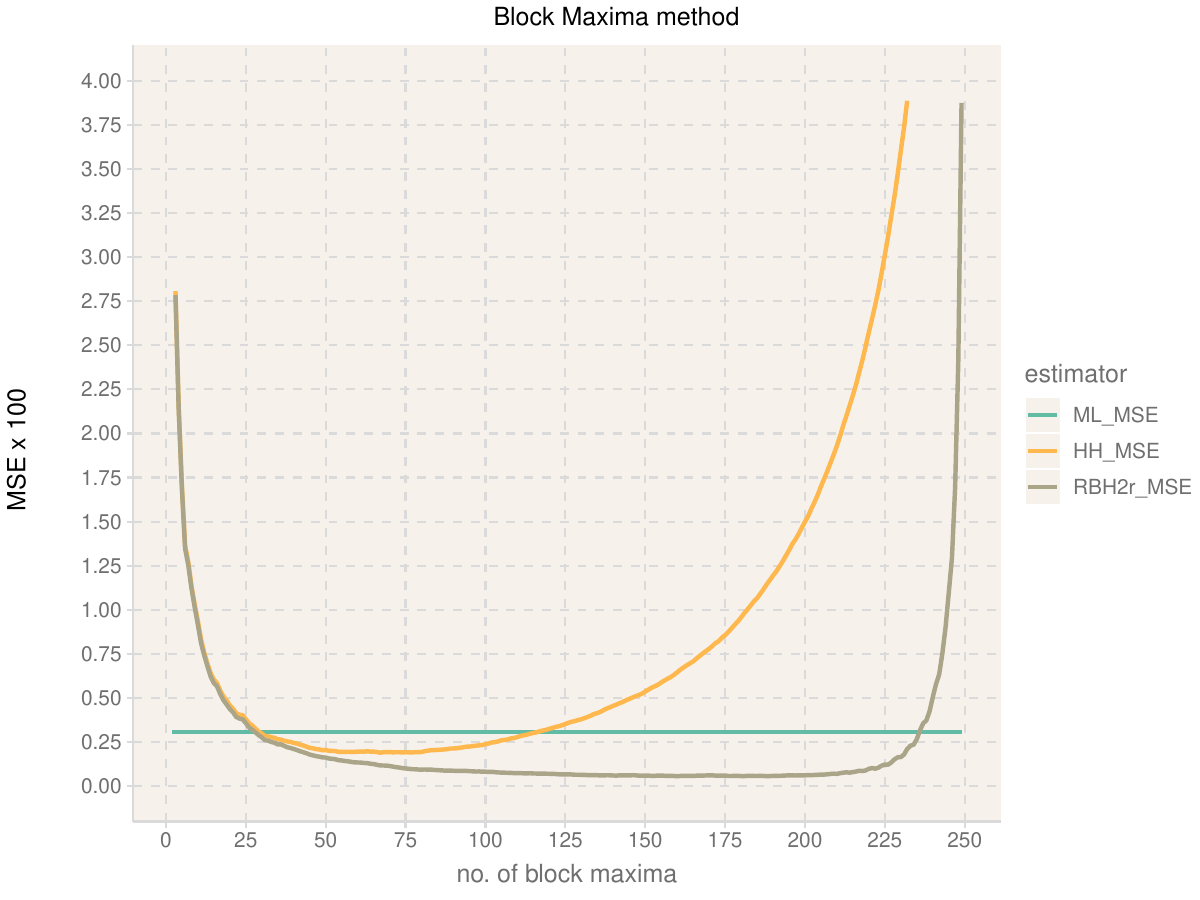}
\end{center}
\caption{Fr\'echet and Pareto parent distributions, both attached to $\gamma=0.3$ and $\tilde{\rho}= -1$. Inference produced by H2 estimators is virtually the same for these two distributions. }
\label{Fig:RBFrechetPareto025}
\end{figure}

\begin{figure}
\begin{center}
\includegraphics[scale=0.4]{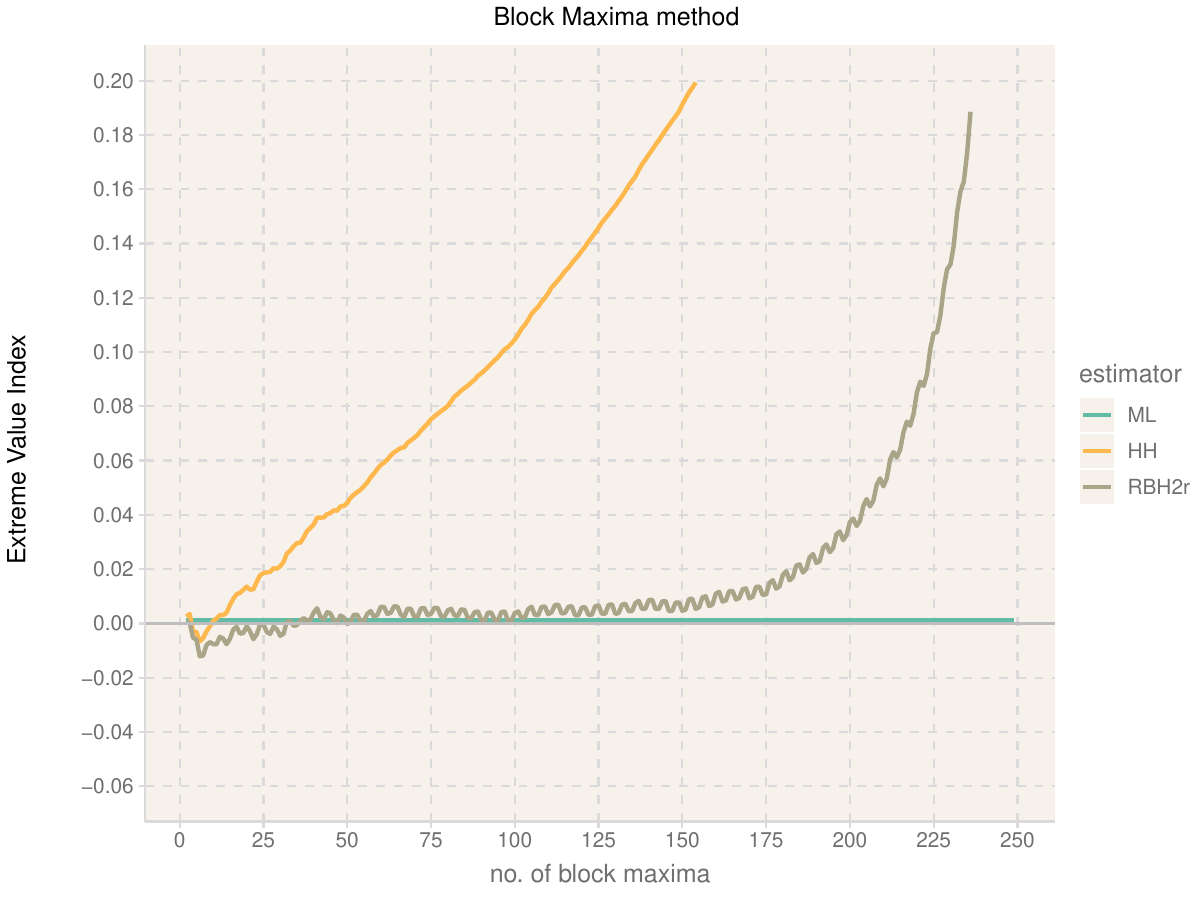} 
\includegraphics[scale=0.4]{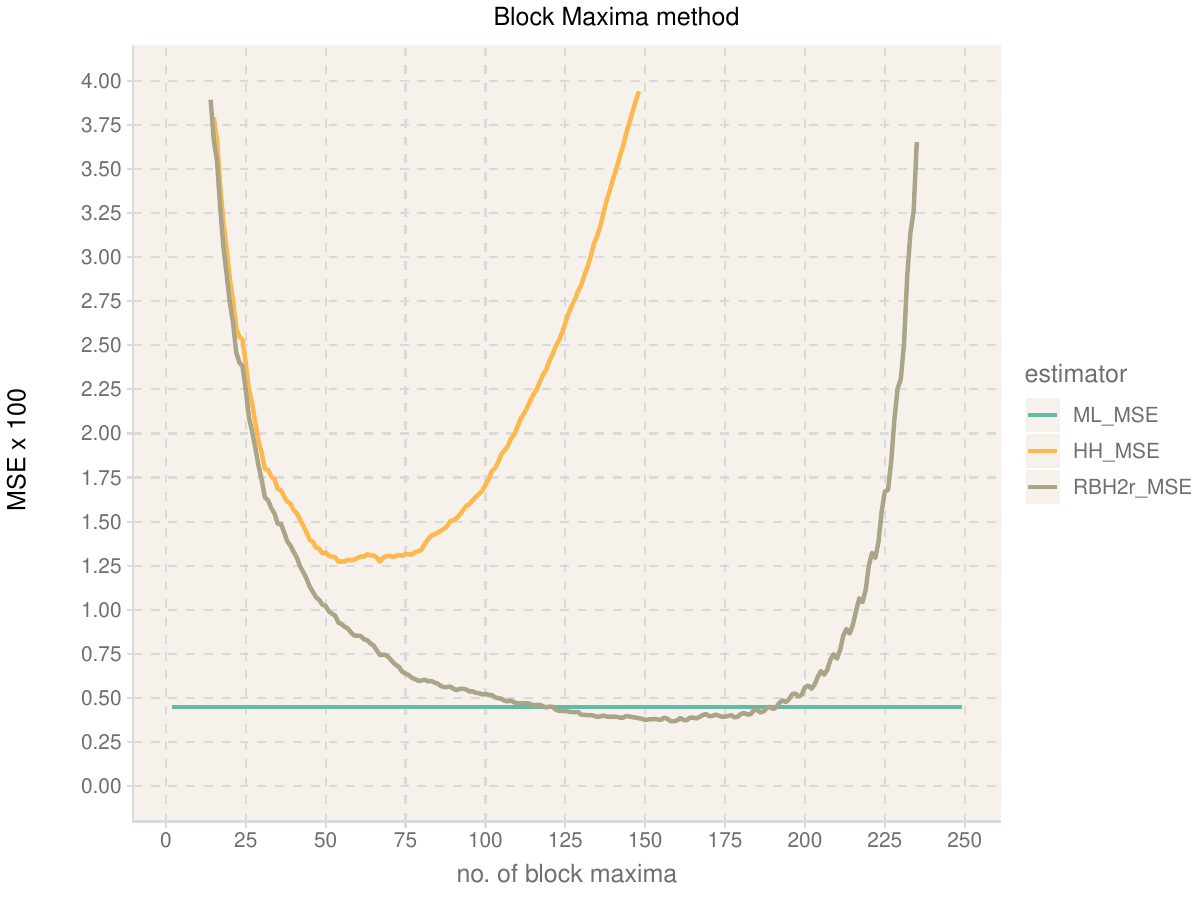}\\
\includegraphics[scale=0.4]{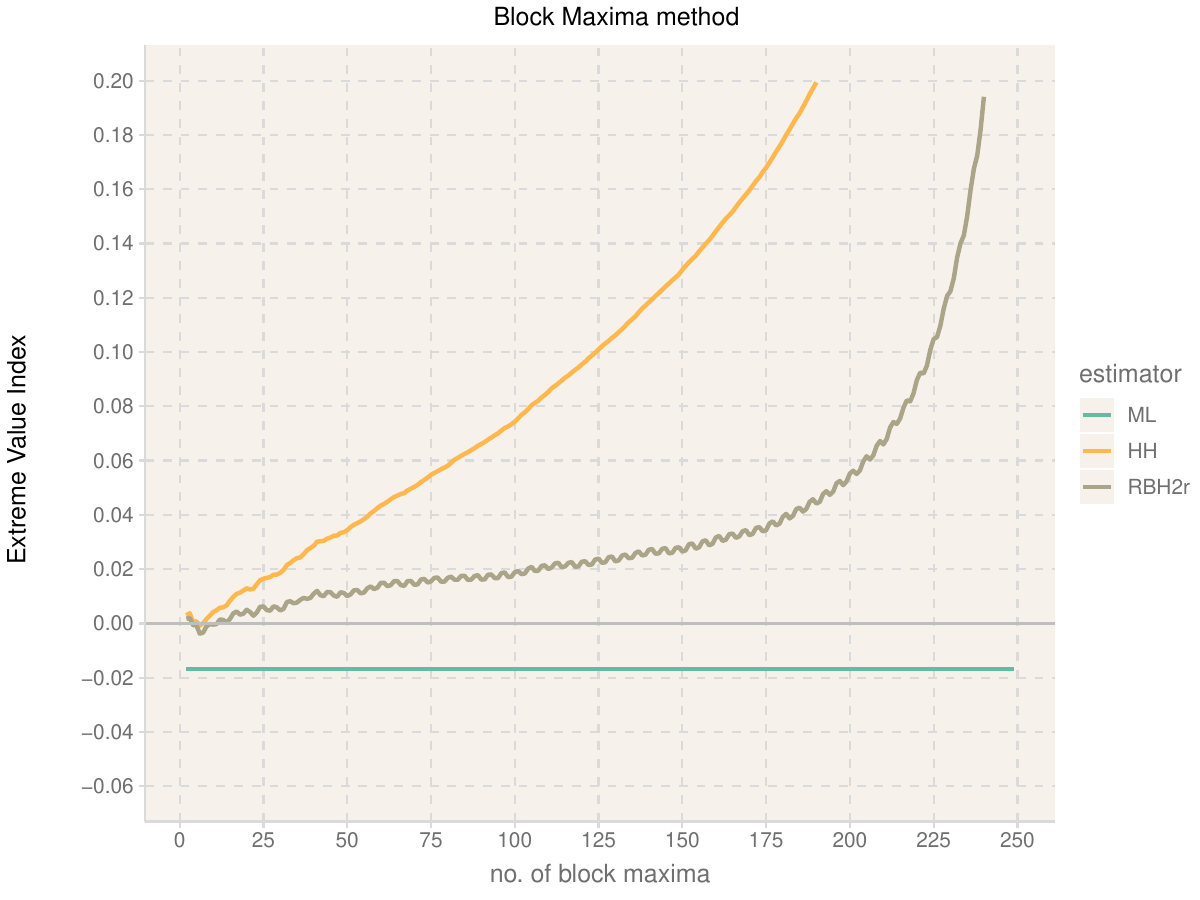} 
\includegraphics[scale=0.4]{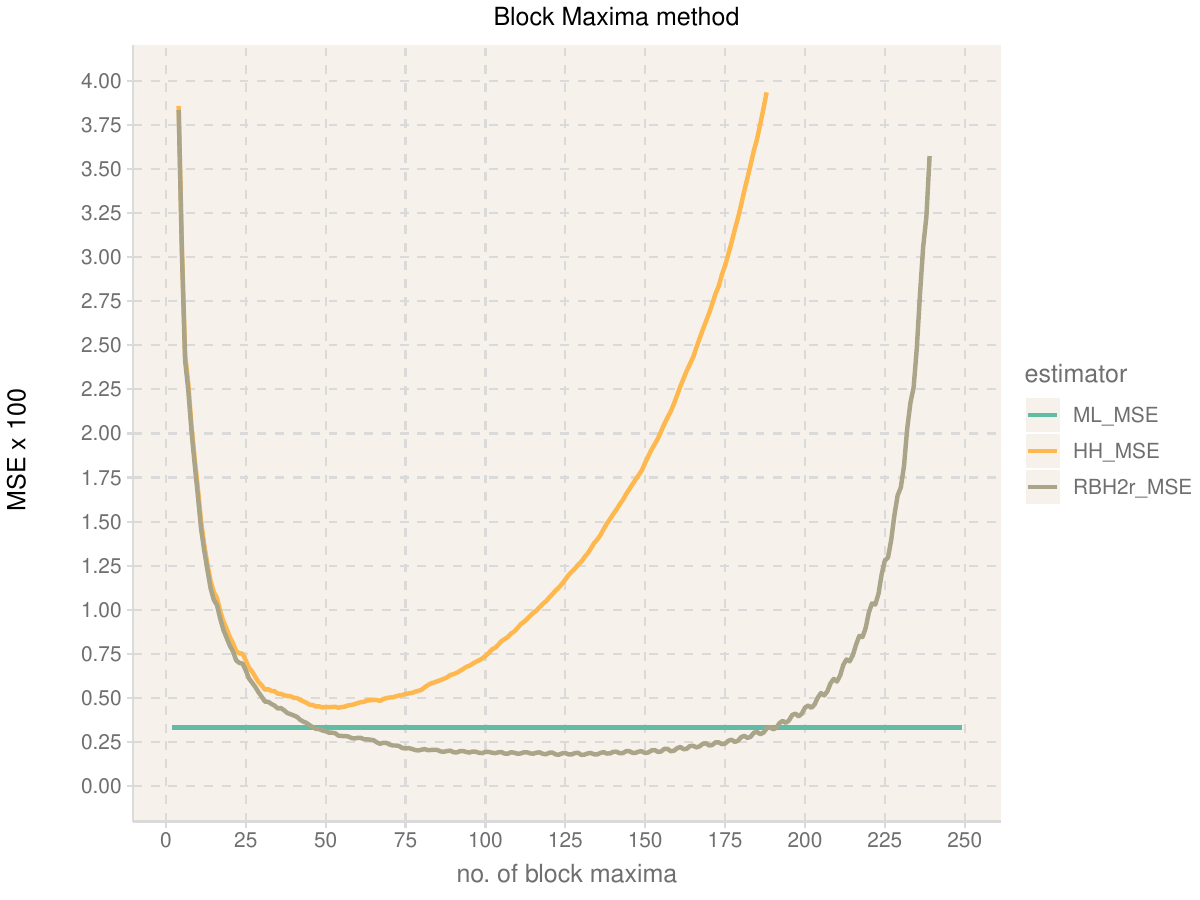}\\
\end{center}
\caption{Burr distribution satisfying \eqref{2RVlogVg} with $\gamma= 1/(\tau \lambda)= 0.75,\, 0.3$ and $\tilde{\rho} = -1/\lambda =-1,\, -2$, respectively top and bottom rows. Reduced-bias H2 (RBH2r) estimation is demonstrated accurate and surpasses the MLE for a wide range of intermediate values $k_0$. This is in spite of the MLE's smaller asymptotic variance for $\gamma \geq 1$ (top) and the misspecification of $\tilde{\rho}=-1$ to shorten H2 of its asymptotic bias (bottom).}
\label{Fig:RBBurr}
\end{figure}

\begin{figure}
\begin{center}
\includegraphics[scale=0.4]{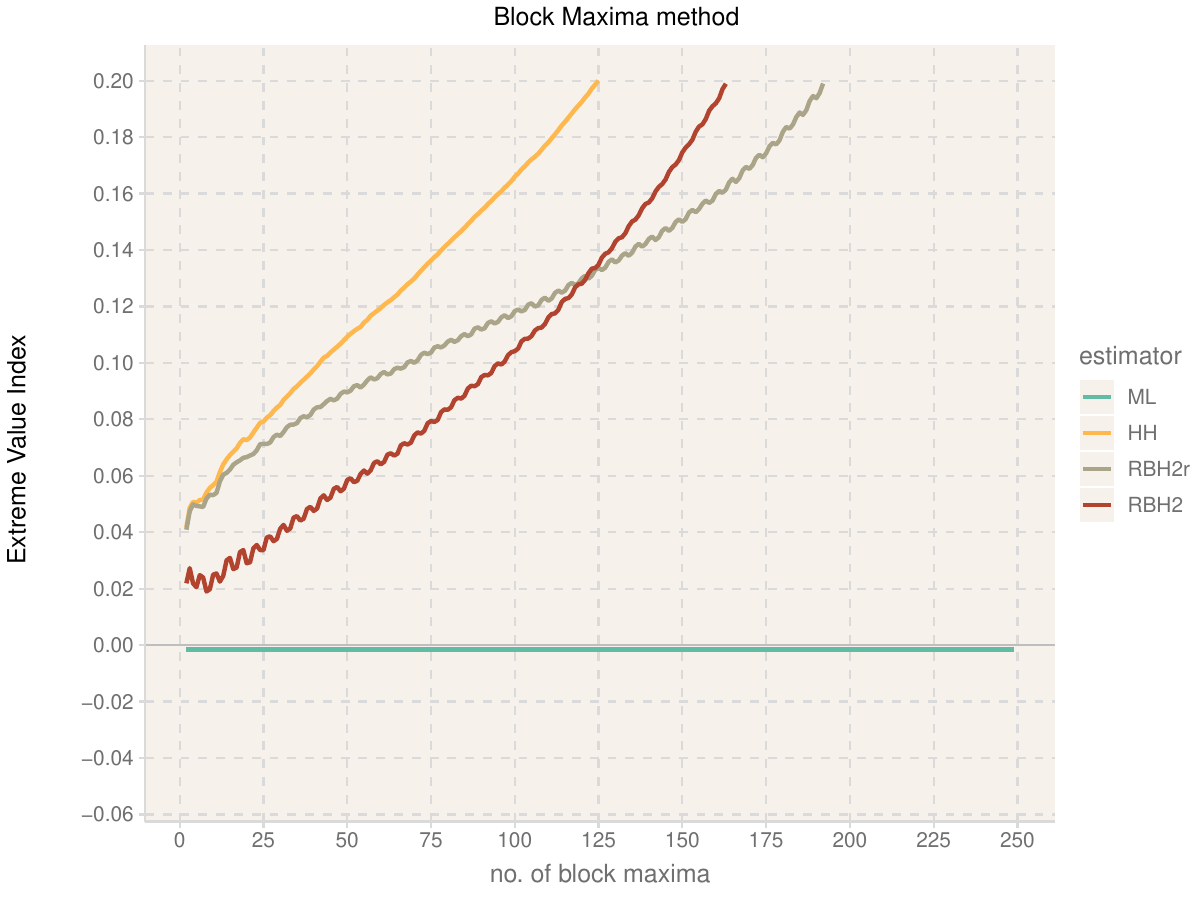} 
\includegraphics[scale=0.4]{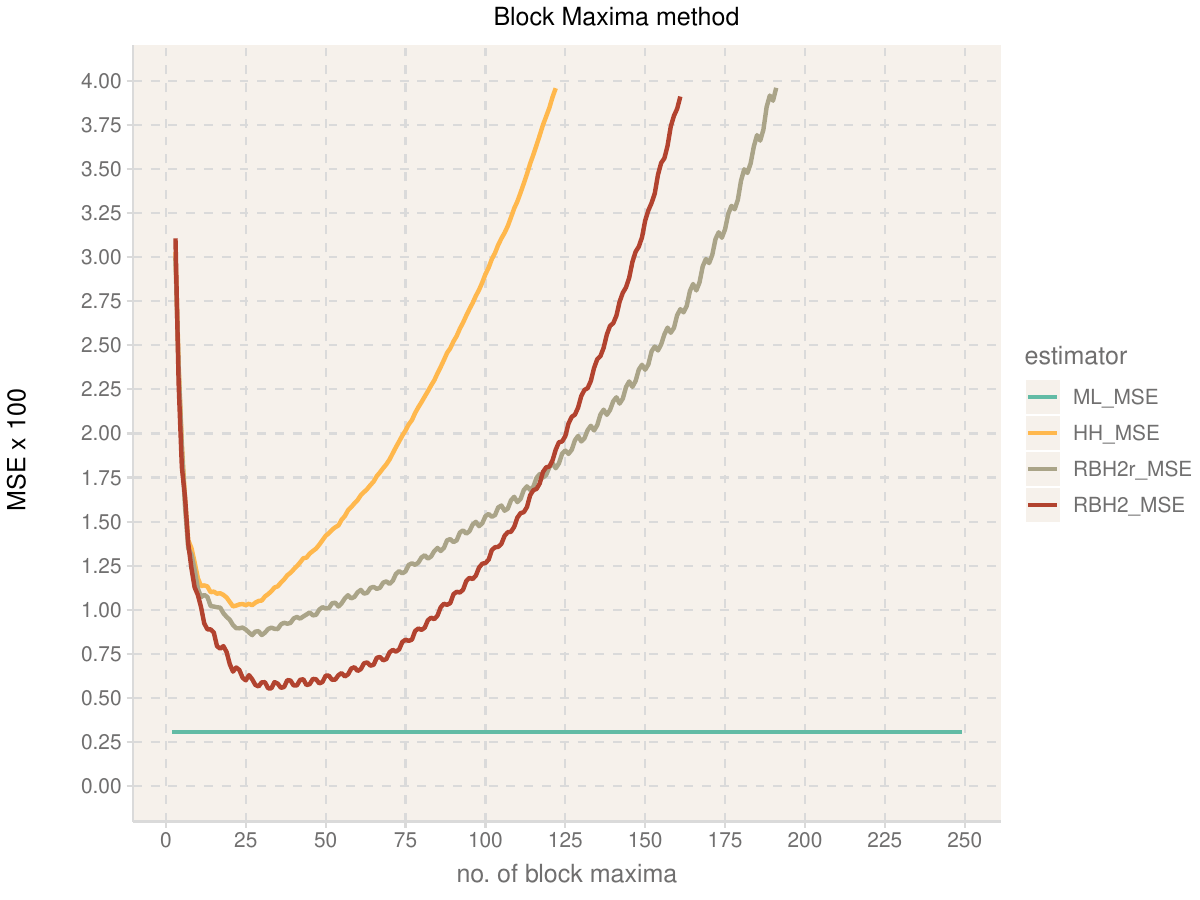}\\
\includegraphics[scale=0.4]{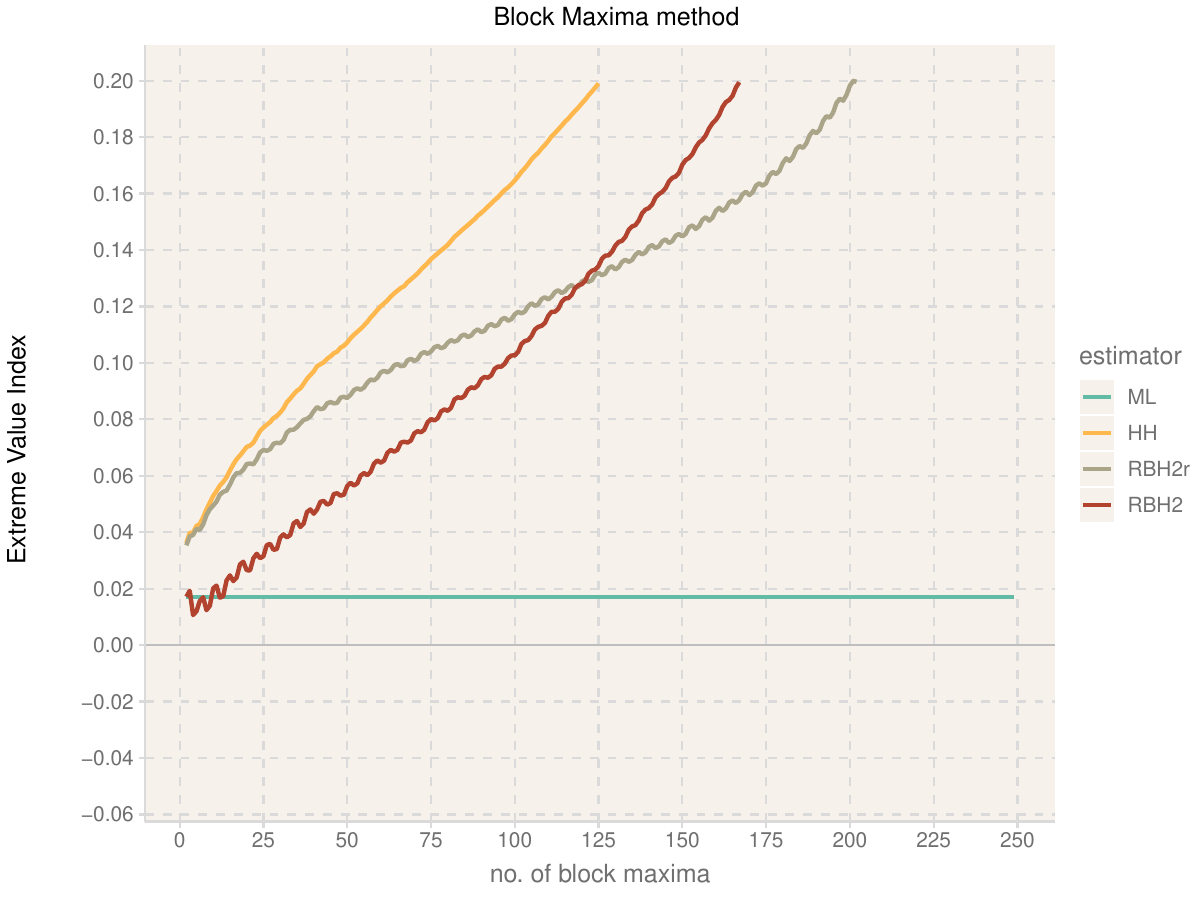} 
\includegraphics[scale=0.4]{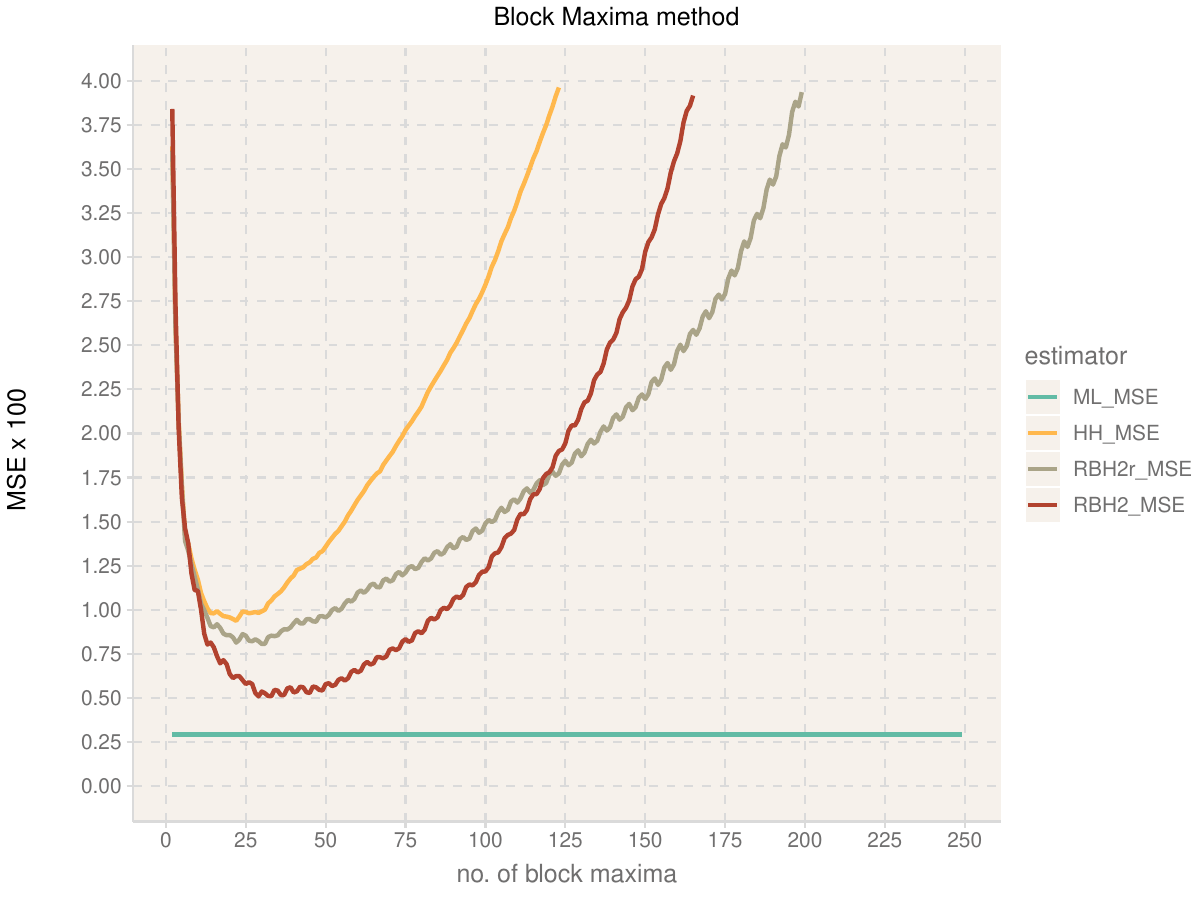}
\end{center}
\caption{Inference produced by H2 estimation based on the limiting GEV ascribed to BM and the stable GPD for POT exceedances, both of which here exemplified with $\gamma=0.25$ and $\tilde{\rho} = -\gamma$, is virtually the same.}
\label{Fig:RBGEVGPD025}
\end{figure}

Together, Figures \ref{Fig:RBFrechetPareto025} and \ref{Fig:RBGEVGPD025} compound a good graphic testament to the effectiveness of the proposed hybrid approach for unifying the, to date, polarised extreme value statistical streams: that grounded in the GEV-fit to BM data and those stemming from POT approaches predicated on the GPD-approximation to tail-related data. Additionally, Figure \ref{Fig:POTGPD} evidences that, even if the approximation bias is left unabated, the H2 estimator comes out in relatively equal standing to the conventional Hill estimator attached to $m=1$. When a small number of upper intermediate order statistics is employed, the H2-estimates' paths already bear a striking resemblance to those delivered by the foremost Hill estimator stemming from the POT approach. This is an example of how knowledge gained from one strand of development (POT approach) is poised to bolster new advances on the other (BM-based methodology) through the hybrid method.

\begin{figure}
\begin{center}
\flushleft \hspace{0.7cm}\includegraphics[scale=0.36]{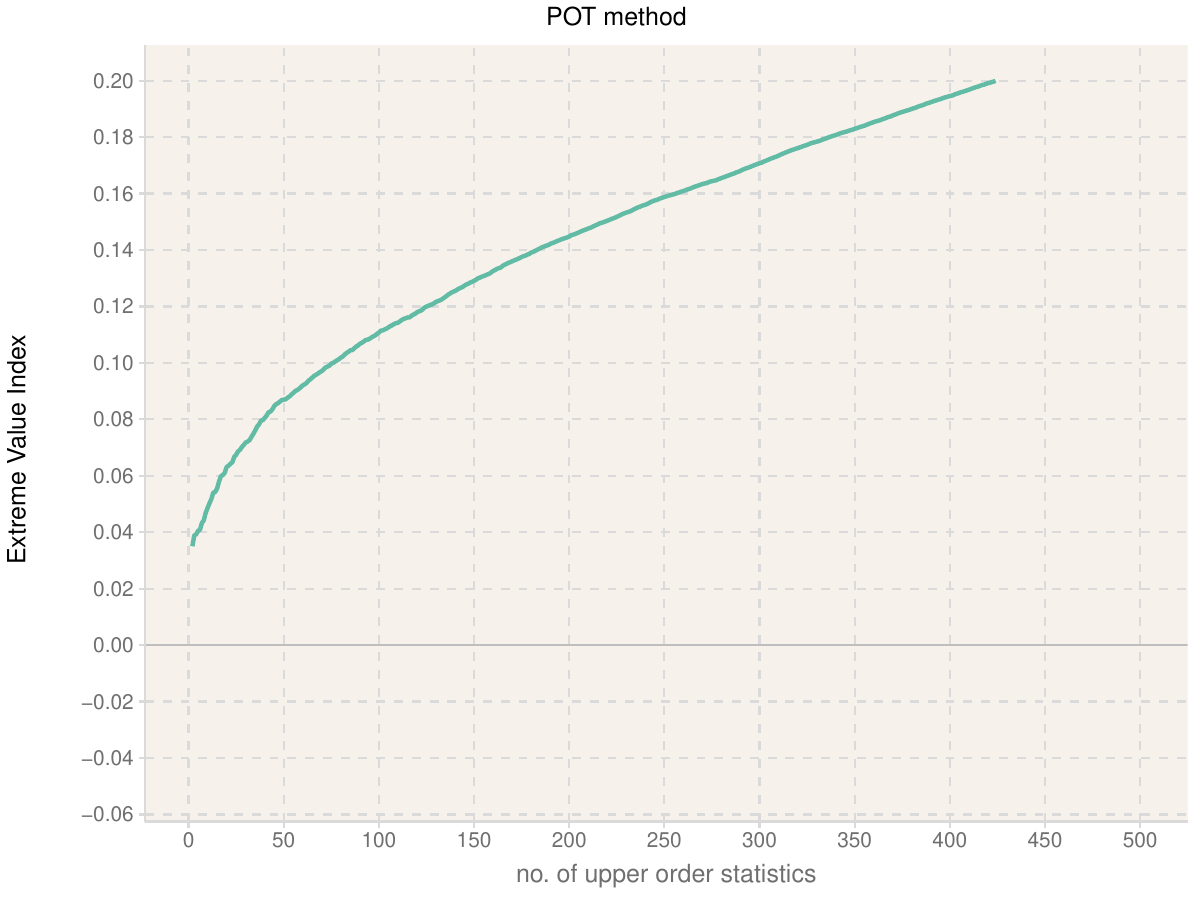}\hspace{0.5cm}
\includegraphics[scale=0.36]{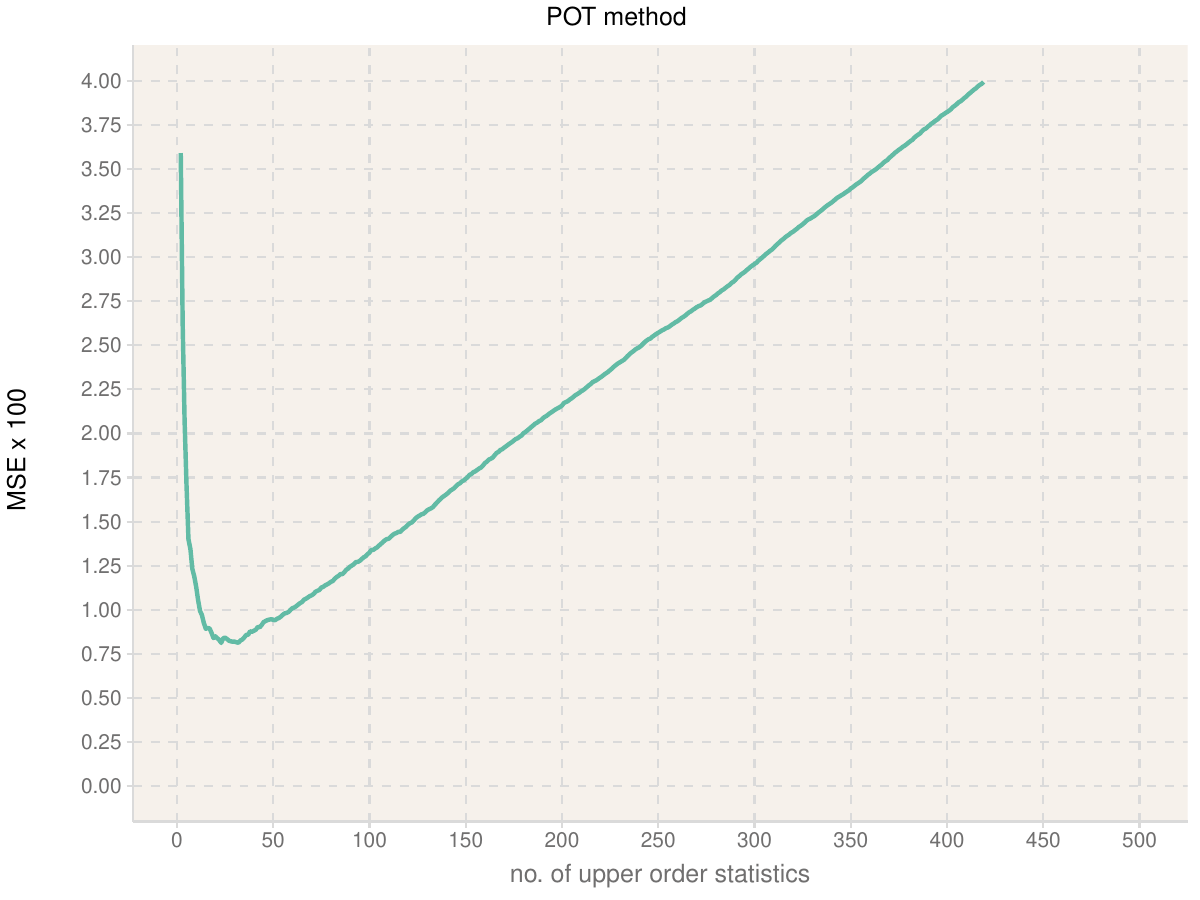}
\end{center}
\caption{Purely POT-GPD: conventional Hill  estimation  (i.e. H2 with $m=1$) for threshold exceedances on the basis of  samples of size $n=5000$ from the GDP with $\gamma=0.25$ and $\tilde{\rho}= -\gamma$.}
\label{Fig:POTGPD}
\end{figure}

\section{Proofs} \label{Sec:Proofs}

This section is devoted to proving the foundational invariance results that bind together the stable distributions arising as the limit to each BM and POT approaches.

\begin{pfofLem}{~\ref{Lem:UnitFrechetOS}:}
For the proof of part (i), we find that the tail distribution function of the $1/2$-shifted unit Fr\'echet random variables
develops as
\begin{equation}\label{eq:taylor}
	\overline{F}_{Z_i} (x-1/2) = 1 - \exp \{ - (x - 1/2)^{-1} \} = x^{-1} - \frac{x^{-3}}{12} \bigl( 1+ o(1)\bigr), 
\end{equation}
for large enough $x$. Indeed, the correction by $1/2$ harnesses the best possible approximation of the tail d.f. \eqref{eq:taylor} to that of the standard Pareto distribution \citep[cf.][]{Alzer1990}. Noticeably, the asymptotic representation \eqref{eq:taylor} is sharp, given the non-existent (equal to zero) second-order term. 
As a consequence, the crux of the proof is in invoking  Lemma 2.4.10 of \cite{deHaanFerreira2006}. 
With $Y_{1:k} \leq Y_{2:k} \leq \dots \leq Y_{k:k}$  the order statistics from a sample of i.i.d standard Pareto random variables with tail d.f.  $1 \wedge y^{-1}$, Lemma 2.4.10 of \cite{deHaanFerreira2006} applied locally, for $s=1$ and with  the intermediate sequence $\theta_k$ in place of $k$, determines that $\sqrt{k}\{(\theta_k/k)Y_{k - \theta_k:k} -1\} $ is asymptotically  standard normal. The precise statement (i) thus follows directly from \eqref{eq:taylor}.

\noindent Proof of part (ii):
Consider the $k$th ordered maxima of i.i.d. unit Fr\'echet  r.v.s and note that
\begin{equation*}
    M^{(m)}_{i:k} \id V\Bigl(m(Z_{i:k}+1/2)- \frac{m}{2} \Bigr).
\end{equation*}
Restricting focus to the $\theta_k +1$ larger order statistics $\bigl\{  M^{(m)}_{k-i:k}\bigr\}_{i=0}^{\theta_k}$, so that part (i) ensures each of associated  $\bigl\{  Z_{k-i:k}\bigr\}_{i=0}^{\theta_k}$ going to infinity with probability tending to one, expansion  \eqref{eq:taylor} yields
\begin{equation*}
   \bigl\{ M^{(m)}_{k-i:k}\bigr\}_{i=0}^{\theta_k} \mathop{\approx}^d \Bigl\{ V\Bigl(mY_{k-i:k}- \frac{m}{2} \Bigr) \Bigr\}_{i=0}^{\theta_k}.
\end{equation*}
 Finally, the  distributional relationship $Y \id \exp\{Z^{-1}\}$ delivers the stated result.
\end{pfofLem}

\begin{pfofLem}{~\ref{Lem:BasicEmpProcess}:}
We borrow Theorem 2.1 of \cite{FerreiradeHaan2015}, albeit specialised in the i.i.d. unit Fr\'echet random variables $\{Z_k\}_{k\geq 1}$, from which we find for the basic unit Fr\'echet normalised quantile process that
    \begin{equation}\label{eq:Aux0}
        \sqrt{k}\Bigr(Z_{k-[ks]:k}-\frac{1}{-\log(1-s)}\Bigl)= -\frac{B_{k}\bigl(s\bigl)}{(1-s)\bigl(-\log(1-s)\bigl)^{2}}+o_{p}(1),
    \end{equation}
    for $s\in \bigl[\frac{1}{k+1},\frac{k}{1+k}\bigl]$, with $\{B_k(s)\}$ denoting a sequence of Brownian bridges on $[0,1]$.
    
\noindent Let $\bigl\{U^{*}_{n}\bigl\}_{n\geq 1}:=\bigl\{F(Z_{n})\bigl\}_{n\geq 1}$ be a sequence of i.i.d. standard uniform random variables stemming from $F(z) = \exp\{ z^{-1}\}$, $z\geq 0$. These are employed to define the quantile process $Q(s):=U^{*}_{k-[ks]:k}$ on $[0,1]$, where $U^{*}_{0:k } \equiv 0$. We now look at the inverse process of the initial (composite) quantile process
 \begin{equation*}
    H_{k}(s) = Z_{[k(1-s)]:k}:=-\frac{1}{\log \bigl( 1- U^{*}_{[ks]:k}\bigr)}= M(Q_{k}(s)), \quad s\in[0,1],
\end{equation*}
where we also define $M(s) := -1/\log (1-s)$ having continuous derivative $
    M'(s)=(1-s)^{-1}\bigl(-\log\bigl(1-s\bigl)\bigl)^{-2} <0$.
We are now ready to devise a similar approximation to \eqref{eq:Aux0} in terms of the  corresponding tail empirical process:
\begin{equation*}
  H_{k}(s)^{\leftarrow}= \bigl(M(Q_k(s)\bigl)^{\leftarrow}= \frac{1}{k}\displaystyle\sum_{i=1} ^{k}\one_{\{U_{i}^{*}\leq e^{-\frac{1}{s}}\}} \as \,\frac{1}{k}\displaystyle\sum_{i=1} ^{k}\one_{\{U_{i}\geq 1-e^{-\frac{1}{s}}\}},
\end{equation*}
with $\bigl\{U_{i}\bigl\}_{n=1}^{n}$ again independent random variables from the uniform distribution on the unit interval.
It results from \eqref{eq:Aux0}, through application of the non-increasing Vervaat's lemma \citep[cf.][Proposition 3]{Resnick2007}, that
\begin{equation}\label{eq:Aux1}
    \Bigl|\sqrt{k}\Bigl(\frac{1}{k}\displaystyle\sum_{i=1} ^{k}\one_{\{U_{i}\geq 1-e^{-\frac{1}{s}}\}}-(1-e^{-\frac{1}{s}}) \Bigl)-B_{k}(e^{-\frac{1}{s}})\Bigl|=o_{p}(1),
\end{equation}
uniformly for $s \geq \frac{1}{\log (k+1)} >0$, with the same sequence of Brownian bridges $\{B_k(s)\}$.

\noindent Next, we replace $s$ with $t-1/2$ everywhere in \eqref{eq:Aux1}, meaning that, for a sequence of reals $\vartheta_{k}> 0$ such that $\vartheta_k \log k \rightarrow \infty$, as $k \rightarrow \infty$, the appropriately normalised tail empirical process
\begin{equation}\label{eq:Alphak}
	\bigl\{\alpha_k(t) \bigr\}_{t > 1/2} := \sqrt{k}\Bigl\{\frac{1}{k}\displaystyle\sum_{i=1} ^{k}\one_{\{U_{i}\geq 1-\exp(-\frac{1}{t-1/2})\}}-\Bigl(1-\exp\bigl\{-\frac{1}{t-1/2}\bigr\}\Bigr) \Bigl\}_{t > 1/2}
\end{equation}
satisfies
\begin{equation*}
   \suprem{t \geq \vartheta_k/(\log (k+1))^{-1}} \, \Bigl|\alpha_k(t)-B_{k}(e^{-\frac{1}{t-1/2}})\Bigl|=o_{p}(1).
\end{equation*}
This enables to harness sharp bounds stemming from the Taylor's expansion for sufficiently large $t$:
\begin{equation}\label{eq:expansion}
    a(t) := 1- \exp\Bigl\{- \big(t -\frac{1}{2} \bigr)^{-1} \Bigr\}= t^{-1}\Bigl(1- \frac{t^{-2}}{12} + o(t^{-2} \bigl)\Bigr)
\end{equation}
for obtaining
\begin{eqnarray*}
      & & \Bigl|  \frac{\alpha_k(t)}{\sqrt{k}} - \Bigl(\frac{1}{k}\displaystyle\sum_{i=1} ^{k}\one_{\{(1-U_i)^{-1}\geq s\}}-s^{-1} \Bigr)+\bigl(a(s)-s^{-1}\bigl)\Bigl|\\
   &\leq &  \Bigl|\frac{1}{k}\displaystyle\sum_{i=1} ^{k}\one_{\{U_{i}\geq a(s)\}}-a(s)\Bigl|+\Bigl|\frac{1}{k}\displaystyle\sum_{i=1} ^{k}\one_{\{(1-U_i)^{-1}\geq s\}}-\frac{1}{s}\Bigl|+o(1)=o_{p}(1),
\end{eqnarray*}
 as $k\rightarrow \infty$, due to the first two terms on the right h.s. of the inequality each resolving to a $O_{p}(1/\sqrt{k})$ by application of Donsker's theorem for the uniform (reduced) empirical process coupled with both the continuous mapping theorem and functional delta method. The third, deterministic term is asymptotically negligible given the slightly more stringent, albeit necessary requirement $\sqrt{k} \vartheta_k^{-3} =o(1)$ associated with \eqref{eq:expansion}. Hence, we have that
\begin{equation*}
   \sqrt{k} \Bigl|\frac{1}{k}\displaystyle\sum_{i=1} ^{k}\one_{\{U_{i}\geq 1-\exp(-\frac{1}{t-1/2})\}} -\frac{1}{k}\displaystyle\sum_{i=1} ^{k}\one_{\{(1-U_i)^{-1} \geq t\}}\Bigl| = O_p(1),
\end{equation*}
uniformly for every $t \geq \vartheta_k \log k$ and $t$ bounded away from $1/2$. It only remains to prove the desired a.s. convergence by way of a strengthening to the above convergence in probability. Indeed, for ascertaining
\begin{equation}\label{eq:asconvergence}
            \suprem{t \geq \, \vartheta_k\log k}\sqrt{k} \Bigl|\frac{1}{k}\displaystyle\sum_{i=1} ^{k}\one_{\{U_{i}\geq 1-\exp(-\frac{1}{t-1/2})\}} -\frac{1}{k}\displaystyle\sum_{i=1} ^{k}\one_{\{(1-U_i)^{-1} \geq t\}}\Bigl| \conv{a.s.} 0
\end{equation}
it suffices to show that, for every $\varepsilon>0$,
\begin{equation}\label{eq:borel}
    \displaystyle\sum_{k \geq 1} P\Bigl\{      \sqrt{k}     \Bigl|\frac{1}{k}\displaystyle\sum_{i=1}^{k}\one_{\{U_{i}\geq 1-\exp(-\frac{1}{t-1/2})\}}-\frac{1}{k}\displaystyle\sum_{i=1} ^{k}\one_{\{(1-U_i)^{-1} \geq t\}}\Bigl)\Bigl|>\varepsilon\Bigl\}\,<\infty,
\end{equation}
for all $ t\geq \vartheta_{k}\log k+ 1/2 =: \theta_k$, whereby we prove that the following tail probability becomes vanishingly small at a suitable (exponential) rate: for any $\delta>0$,
\begin{eqnarray*}
	& &  P \Bigl\{ \suprem{t \geq \theta_k} \sqrt{k}\, \Bigl| \Bigl( \frac{1}{k}\displaystyle\sum_{i=1} ^{k}\one_{\{U_{i}\geq 1-\exp(-\frac{1}{t-1/2})\}}- a(t) \Bigr) - \Bigl(\frac{1}{k}\displaystyle\sum_{i=1} ^{k}\one_{\{(1-U_i)^{-1} \geq t\}} - t^{-1} \Bigr) +  t^{-1} \bigl(1 + O(t^{-2}) \bigr)\Bigr| > \delta\Bigr\} \\
	&\leq&  P \Bigl\{ \suprem{t \geq \theta_k} \bigl| \alpha_k(t) \bigr| \geq \delta/3 \Bigr\} +  P \Bigl\{ \suprem{t \geq \theta_k}  \bigl|\Gamma_k(t) \bigr| \geq \delta/3\Bigr\}.
\end{eqnarray*}
To this end, we employ Theorem 1.1(3) from \cite{Mason2001} with $d=k \beta_k$,  for $\beta_k := 1-\exp\{ -(\vartheta_k \log k)^{-1}\}$ (a decreasing sequence in $k\geq 2$) in relation to the reduced empirical uniform empirical process arising from $\{\alpha_k(t)\}$ while setting $s = a(t)$ everywhere, thus rendering
\begin{equation*}
	\suprem{0< s \leq \beta_k} \bigl| \alpha^*_k(s) - B_k(1-s)\bigr| \geq k^{-1/2}(x - \log \vartheta_k)=: \varepsilon_k
\end{equation*}
for all $x \in \real$. Owing to the assumption that $\log \vartheta_k/\sqrt{k} = o(1)$, for every $\varepsilon >0$, there exists $k_0$ such that for $k \geq k_0$,
\begin{equation*}
	P \Bigl\{ \suprem{0< s \leq \beta_k} \bigl| \alpha^*_k(s) - B_k(1-s)\bigr| \geq \varepsilon \Bigr\}  \leq b e^{-c \varepsilon},
\end{equation*}
for some constants $b, c >0$. It is useful to note in this instance that $B_k(1-s) \id -B_k(s)$. Hence,
\begin{equation*}
	P \Bigl\{ \suprem{t \geq \theta_k} \bigl| \alpha_k(t) \bigr| \geq \delta/3 \Bigr\} \leq P \Bigl\{ \suprem{0< s \leq \beta_k} \bigl| \alpha^*_k(s) - B_k(1-s)\bigr| \geq \delta/3 \Bigr\} + P \Bigl\{ \suprem{0<s \leq \beta_k} \bigl| B_k(s)\bigr| \geq \delta/3 \Bigr\} \leq b e^{-c \delta } + e^{-2 \delta ^2 }.
\end{equation*}
The latter exponential term stems from Theorem 1.5.1 of \citet{CsorgoRevesz1981}. Both tail probabilities on the right hand-side decrease exponentially fast with decreasing $\beta_k$ (i.e. as $k\rightarrow \infty$).
 The convergence in \eqref{eq:asconvergence} thus holds by application of Borel-Cantelli lemma.

\noindent This reasoning applies seamlessly to the process $\{\Gamma_k(t)\}$ and its direct uniform counterpart $\{\Gamma^*_k(t)\}$ that originates from putting $t=1/s$. Indeed, the second Brownian bridge in the above follows from Donsker's theorem for the reduced empirical process, giving a distribution-free result outright. The proof is completed with the following note.
The local modulus of continuity for Brownian bridge \citep[Theorem 1.9.1 in][]{CsorgoRevesz1981} establishes that, for $0<\delta<1/2$,
\begin{equation*}
           \lim_{h\downarrow 0}\suprem{0<x \leq h}\frac{\bigl|B(x+h)-B(x)\bigl|}{h^{1/2-\delta}} \mathop{=}^{a.s.} 0 \,.
\end{equation*}
In view of the expansion \eqref{eq:expansion}, it is fruitful to substitute $x=s^{-1}$ and use $h=O(s^{-3})$ so that
\begin{equation*}
            \lim_{h\downarrow 0}\suprem{0<s^{-1} \leq h}\frac{\bigl|B(s^{-1}+h)-B(s^{-1})\bigl|}{h^{1/2-\delta}}= \lim_{h\downarrow 0}\suprem{0<s^{-1} \leq h}\frac{\Bigl|B\bigl( \exp\{-1/(s-1/2)\}\bigr)-B(1-s^{-1})\Bigl|}{h^{1/2-\delta}}\mathop{=}^{a.s.} 0 \,.
\end{equation*}
Finally, application of Vervaat's lemma \citep[see][Proposition 3.3]{Resnick2007} coupled with L\'evy's distance theorem allows to revert to analogous formulation of \eqref{eq:asconvergence}, i.e., to that featuring the quantile process.
\end{pfofLem}


\appendix

\section{Results concerning Condition B}\label{App:CondB1}

\begin{prop}\label{Prop:DoACondMax}
    For every $m \geq 1$, the distribution function $F^m$ is in the domain of attraction of an extreme value distribution $G_{\gamma}$ attached to a positive $\gamma$ (i.e. $F^m \in \mathcal{D}(G_\gamma)_{\gamma >0}$), in which case we write \begin{equation*}
        G_{\gamma}\bigl( \frac{x-1}{\gamma} \bigr) = \exp \{ - x^{-1/\gamma}\},
    \end{equation*}
    for all $x>0$, if and only if 
    \begin{equation*}
       \lim_{t\rightarrow \infty} \frac{1-F^m\bigl( x V(m(t-1/2)) \bigr) }{ 1 - F^m \bigl( V(m(t-1/2)) \bigr)} = x^{-1/\gamma},
    \end{equation*}
    for all $x>0$. Furthermore, if for each $s> 0$, $F_s$ is a probability distribution function satisfying the tail comparability condition
    \begin{equation*}
    	\limit{x} \, \frac{1-F^m_s(x)}{1-F^m(x)} = c(s) >0,
    \end{equation*}
    then, for all $x>0$, 
    \begin{equation*}
		\lim_{t\rightarrow \infty} \frac{1-F_s^m\bigl( x V(m(t-1/2)) \bigr) }{1 - F^m \bigl( V(m(t-1/2)) } = \Bigl(\frac{x}{c^\gamma(s)}\Bigr)^{-1/\gamma},
    \end{equation*}
    and $F^m_s \in \mathcal{D}(G_\gamma)$, $s>0$, with the same extreme value index $\gamma >0$.
\end{prop}    

\begin{proof}
 We begin with the precise definition of the quantile-type function $V_{\textrm{BM}}(t)$ pertaining to $F^m$:
 \begin{equation*}
 	V_{\textrm{BM}}(t):= \inf \bigl\{ x \in \real : \, \frac{1}{- m \log F(t)} \geq t \bigr\}, \quad t \geq 0,
\end{equation*}
from which expression it becomes clear that $V_{\textrm{BM}}(t) = V(mt)$. Since $F^m \in \mathcal{D}(G_\gamma)$ for some $\gamma >0$, then $F^m \in RV_{-1/\gamma}$, i.e.,
\begin{equation*}
	\limit{t} \frac{1-F^m(tx)}{1-F^m(t)} = x^{-1/\gamma},
\end{equation*}
while $\lim_{u \rightarrow \infty} V(u) = \infty$. Hence, by the left-continuity of $V_{\textrm{BM}}$, also a nondecreasing and positive function eventually, the first limit in the proposition holds locally uniformly for $x >0$. The indirect implication is routinely verified along the reverse  pathway. The second limit relation is verified through defining the distribution function $F_0(x)= \min \bigl(1, \, c(s) (1-F^m(x)\bigr)$ and using it into
\begin{equation*}
	\limit{t} \frac{1-F_s^m\bigl( x V(m(t-1/2)) \bigr) }{1 - F_0 \bigl(x  V(m(t-1/2)) } \frac{1-F_0\bigl( x V(m(t-1/2)) \bigr) }{1 - F_0 \bigl( V(m(t-1/2)) } = x^{-1/\gamma},
\end{equation*}
for all $ x>0$. The first part of the proposition is recovered under the convention $c(0) =1$, whereas the transitivity of the domain of attraction of $F_0$ to every $F^m_s$ is ascertained in \citet[][p.143]{TailTrend15}.
\end{proof}

\begin{lem}\label{Lem:IntRVofFm}
Given the conditions of Proposition \ref{Prop:DoACondMax}, it holds that
\begin{equation*}
    \lim_{t\rightarrow \infty} \int_{1}^{\infty}\frac{1-F^m\bigl( x V(m(t-1/2)) \bigr) }{ 1 - F^m \bigl( V(m(t-1/2)) \bigr)}\, \frac{dx}{x} = \gamma >0.
\end{equation*}
\end{lem}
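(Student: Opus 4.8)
\textbf{Proof proposal for Lemma \ref{Lem:IntRVofFm}.}

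The plan is to exploit the regular variation established in Proposition \ref{Prop:DoACondMax} together with the accompanying uniform Potter-type bounds recorded in Condition B, and then invoke dominated convergence. First I would write, for each fixed $t$, the integrand as $\phi_t(x) := \overline{F^m}(xV(m(t-1/2)))/\overline{F^m}(V(m(t-1/2)))$, which by Proposition \ref{Prop:DoACondMax} converges pointwise to $x^{-1/\gamma}$ for every $x>0$ as $t\to\infty$. Since we integrate against $dx/x$ on $[1,\infty)$ and $\int_1^\infty x^{-1/\gamma}\,dx/x = \gamma$ (finite because $\gamma>0$), the candidate limit is exactly the claimed value; the whole task is to justify interchanging limit and integral.

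The key step is domination. Using the uniform inequality displayed just after \eqref{eq:RVofFm} in Condition B: for each $\varepsilon,\delta>0$ there is $t_0$ such that for $t$, $tx\geq t_0$,
\begin{equation*}
	\phi_t(x) \leq x^{-1/\gamma} + \varepsilon\max\bigl(x^{-1/\gamma-\delta},\,x^{-1/\gamma+\delta}\bigr) \leq (1+\varepsilon)\,x^{-1/\gamma+\delta},
\end{equation*}
valid for $x\geq 1$ once $t$ is large. Choosing $\delta < 1/\gamma$ (so that $1/\gamma-\delta>0$), the bound $x\mapsto (1+\varepsilon)x^{-1/\gamma+\delta}$ is integrable against $dx/x$ on $[1,\infty)$, which supplies the required dominating function uniformly in all large $t$. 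Hence by the dominated convergence theorem, $\lim_{t\to\infty}\int_1^\infty \phi_t(x)\,dx/x = \int_1^\infty x^{-1/\gamma}\,dx/x = \gamma$.

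One technical wrinkle, and the only place requiring care, is that the uniform bound above holds only for $tx\geq t_0$, i.e. for $x\geq t_0/t$; since we only integrate over $x\geq 1$ and $t\to\infty$, eventually $t_0/t < 1$ and the restriction is vacuous on the domain of integration, so this causes no genuine obstacle. (If one prefers to avoid even this remark, monotonicity of $\overline{F^m}$ gives $\phi_t(x)\leq 1$ for $x\geq 1$ trivially, but that crude bound is not integrable against $dx/x$; the Potter bound is what is actually needed for the tail.) Assembling these observations completes the proof. \hfill $\square$
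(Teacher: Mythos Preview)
Your proposal is correct and follows essentially the same route as the paper: invoke the Potter-type uniform bound from Condition~B (specialised to $x\geq 1$, where the $\max$ reduces to $x^{-1/\gamma+\delta}$), pick $\delta<1/\gamma$ to obtain an integrable majorant, and apply dominated convergence to conclude $\int_1^\infty x^{-1/\gamma}\,dx/x=\gamma$. Your handling of the constraint $tx\geq t_0$ is slightly more explicit than the paper's, which simply restates the bound for $t\geq t_0$, $x\geq 1$ without comment, but the argument is otherwise the same.
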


\begin{proof}
	Heeding the associated uniform bounds stated in Condition B we have that, for every $\varepsilon,\delta>0$ there is $t_0= t_0(\varepsilon,\delta)$, such that for $t\geq t_0$ and $x\geq 1$
\begin{equation*}
    \Bigl|\frac{\overline{F^{m}}(xV(mg(t)))}{\overline{F^{m}}(V(mg(t)))}-x^{-\frac{1}{\gamma}}\Bigl| \leq \varepsilon x^{-\frac{1}{\gamma}+\delta},
\end{equation*}
we employ the triangle inequality followed by application of the dominated convergence to verify the approximation of the integrals in the theorem:
\begin{equation*}
    \biggl| \int_1^{\infty} \frac{\overline{F^{m}}\bigl( x V(m g(t)) \bigr) }{ \overline{F^{m}} \bigl( V(m g(t)) \bigr)}\, \frac{dx}{x} - \int_1^{\infty} x^{-1/\gamma} \, \frac{dx}{x} \biggr| \leq  \varepsilon \int_1^{\infty} \bigl| x^{-(\frac{1}{\gamma}-\delta)}\bigr|  \, \frac{dx}{x} < \infty,
\end{equation*}
for every $0<\delta< 1/\gamma $.
\end{proof}

\section{Second order regular variation development of Condition A2}\label{App:CondB2}
\begin{lem}\label{Lem:2ndOrdLogShift}
Suppose that the function $V\in RV_{\gamma}$, $\gamma>0$, is such that the second order condition \eqref{eq:2ndRVofV} is satisfied with second order parameter $\rho \leq 0$. With $g(t):=t-1/2$, this implies for every $m\geq 1$,
\begin{equation*}
 \limit{t} \frac{\log V(mg(tx))-\log V(mg(t))-\gamma\log x}{\tilde{A}(mt)}= 
 \frac{x^{\tilde{\rho}}-1}{\tilde{\rho}},
\end{equation*}
for all $x>0$, where $\tilde{\rho} = \max(\rho, -1)$ and with
\begin{equation*}
 \tilde{A}(t)=\begin{cases}
A_{0}(t), & \rho > -1, \\
A_{0}(t) +\frac{m\gamma}{2}{t}^{-1}, &\rho=-1,\\
\frac{\gamma}{2} \bigl(\frac{t}{m}-\frac{1}{2} \bigr)^{-1}, & \rho < -1.
\end{cases} 
\end{equation*}
\end{lem}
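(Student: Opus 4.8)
The plan is to separate the effect of the multiplicative scaling $t\mapsto tx$ from that of the additive shift $g(t)=t-1/2$, and then to decide, regime by regime in $\rho$, which of the two governs the rate of convergence. First I would record the elementary identity $g(tx)=xg(t)+(x-1)/2$, equivalently $mg(tx)=mxg(t)\bigl(1+\eta_t(x)\bigr)$ with $\eta_t(x):=\tfrac{x-1}{2xg(t)}\to 0$ for each fixed $x>0$, and write
\begin{equation*}
\log V(mg(tx))-\log V(mg(t))-\gamma\log x=\Delta_t(x)+\Lambda_t(x),
\end{equation*}
where $\Delta_t(x):=\log V(mg(tx))-\log V(mxg(t))$ isolates the shift and $\Lambda_t(x):=\log V(mxg(t))-\log V(mg(t))-\gamma\log x$ is a pure scaling increment.

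The term $\Lambda_t(x)$ I would dispatch by applying Condition~A2 in the locally uniform form \eqref{eq:2ndRVLogV}, with its uniform bound \eqref{inequality}, at the argument $mg(t)\to\infty$: this gives $\Lambda_t(x)=A_0(mg(t))\bigl(\tfrac{x^{\rho}-1}{\rho}+o(1)\bigr)$ locally uniformly in $x$, and one replaces $A_0(mg(t))$ by $A_0(mt)(1+o(1))$ since $|A_0|\in RV_{\rho}$. For $\Delta_t(x)=\log V\bigl(mxg(t)(1+\eta_t(x))\bigr)-\log V(mxg(t))$ I would invoke \eqref{eq:2ndRVLogV} again, now at the argument $s=mxg(t)\to\infty$ with the multiplier $1+\eta_t(x)$, which for fixed $x$ eventually lies in a fixed compact neighbourhood of $1$; this yields $\Delta_t(x)=\gamma\log\bigl(1+\eta_t(x)\bigr)+A_0(mxg(t))\,\tfrac{(1+\eta_t(x))^{\rho}-1}{\rho}+o(A_0(mt))$. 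Expanding $\log(1+\eta)=\eta+O(\eta^2)$ and $\tfrac{(1+\eta)^{\rho}-1}{\rho}=\eta+O(\eta^2)$ around $\eta=0$, and using $\eta_t(x)\asymp g(t)^{-1}\asymp t^{-1}$ together with $A_0(mt)\to 0$, one arrives at
\begin{equation*}
\Delta_t(x)=\frac{\gamma}{2g(t)}\bigl(1-x^{-1}\bigr)+o(t^{-1})+o(A_0(mt)),
\end{equation*}
where $\eta_t(x)=\tfrac{x-1}{2xg(t)}$ and $\tfrac{x-1}{x}=1-x^{-1}$ have been used. The crucial feature is that the $A_0$-contribution to $\Delta_t(x)$ carries the extra factor $\eta_t(x)=o(1)$, hence is negligible against $A_0(mt)$ itself.

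It remains to combine the two pieces and read off $\tilde A$, $\tilde\rho$. If $\rho>-1$ then $t^{-1}=o(|A_0(mt)|)$, so $\Delta_t(x)=o(A_0(mt))$ and the limit is $\tfrac{x^{\rho}-1}{\rho}=\tfrac{x^{\tilde\rho}-1}{\tilde\rho}$ with $\tilde A=A_0$, $\tilde\rho=\rho$. If $\rho=-1$ then $A_0(mt)$ and $\tfrac{m\gamma}{2}(mt)^{-1}=\tfrac{\gamma}{2t}$ are of the same exact order and must be kept together; using $\tfrac{\gamma}{2g(t)}=\tfrac{\gamma}{2t}(1+o(1))$ and $\tfrac{x^{-1}-1}{-1}=1-x^{-1}$ one gets $\Delta_t(x)+\Lambda_t(x)=\bigl(A_0(mt)+\tfrac{\gamma}{2t}\bigr)(1-x^{-1})(1+o(1))+o(\tilde A(mt))$, i.e. the limit $\tfrac{x^{-1}-1}{-1}$ with $\tilde A(mt)=A_0(mt)+\tfrac{m\gamma}{2}(mt)^{-1}$, $\tilde\rho=-1$. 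If $\rho<-1$ then $|A_0(mt)|=o(t^{-1})$ so $\Lambda_t(x)=o(\tilde A(mt))$, and $\Delta_t(x)=\tfrac{\gamma}{2g(t)}(1-x^{-1})(1+o(1))$ dominates; since $\tfrac{\gamma}{2g(t)}=\tfrac{\gamma}{2}\bigl(\tfrac{mt}{m}-\tfrac12\bigr)^{-1}$ this matches $\tilde A$ evaluated at $mt$, again with $\tilde\rho=-1$. In every case $|\tilde A|\in RV_{\tilde\rho}$, $\tilde A$ is eventually of one sign and tends to $0$, and local uniformity of the convergence is inherited from the local uniformity built into \eqref{eq:2ndRVLogV} and the uniformity on compacts of the elementary Taylor remainders.

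The main obstacle is the sharpness of the estimate for $\Delta_t(x)$: one has to verify that the ``$o(1)$'' in the second order relation, read along the multipliers $1+\eta_t(x)\to 1$, is legitimately $o(1)$ — which it is, because for each fixed $x$ those multipliers remain in a fixed compact set, so the genuinely \emph{locally uniform} form of \eqref{eq:2ndRVLogV} applies; the crude Potter-type bound $O(\varepsilon|A_0(s)|)$ coming from \eqref{inequality} at a multiplier near $1$ would only control the error up to order $|A_0(mt)|$, which is not small enough in the regime $\rho>-1$. A secondary point is the bookkeeping at the boundary $\rho=-1$, where neither $A_0(mt)$ nor $\tfrac{\gamma}{2t}$ may be discarded and the regular-variation and sign statements for $\tilde A$ rely on the non-cancellation of these two $RV_{-1}$ contributions, the degenerate case being reached by continuity from the $\rho<-1$ expression.
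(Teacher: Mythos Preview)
Your proposal is correct and follows essentially the same approach as the paper: apply the second-order relation \eqref{eq:2ndRVLogV} locally uniformly, Taylor-expand the resulting logarithms, and then compare the two competing rates $A_0(mt)$ and $t^{-1}$ across the three regimes in $\rho$. The only cosmetic difference is the choice of pivot --- the paper anchors both increments at $mt$, writing $mg(tx)=mt\bigl(x-\tfrac{1}{2t}\bigr)$ and $mg(t)=mt\bigl(1-\tfrac{1}{2t}\bigr)$, whereas you anchor first at $mg(t)$ and then at $mxg(t)$ --- but the resulting expansion and the identification of $\tilde A$, $\tilde\rho$ are identical; your caution about needing the genuinely locally uniform form of \eqref{eq:2ndRVLogV} (rather than the one-sided Potter bound \eqref{inequality}) is in fact stronger than necessary, since the arbitrariness of $\varepsilon$ in \eqref{inequality} already yields an $o(|A_0|)$ remainder at multipliers tending to $1$.
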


\begin{proof}
        For all $x>0$ and any $m \geq 1$, as $t \rightarrow \infty$,
    \begin{eqnarray*}
 & &\log V \bigl(mtx -\frac{m}{2} \bigr) - \log V\bigl(mt -\frac{m}{2} \bigr) - \gamma\log x\\
 &=& -\gamma\log x +\log V \Bigl( mt \bigl(x -\frac{1}{2t} \bigr) \Bigr) -\log V(mt) - \Bigl[\log  V \Bigl( mt \bigl(1 -\frac{1}{2t} \bigr) \Bigr) -\log V(mt)\Bigr]\\
 &=& -\gamma\log x + \gamma\log \Bigl(x-\frac{1}{2t}\Bigl)+A_{0}(mt)\frac{\bigl(x-\frac{1}{2t}\bigl)^{\rho}-1}{\rho}(1+o(1))-\Bigl[\gamma\log\Bigl(1-\frac{1}{2t}\Bigl)+A_{0}(mt)\frac{\bigl(1-\frac{1}{2t}\bigl)^{\rho}-1}{\rho}(1+o(1))\Bigr]\\
 &=&\gamma\log \Bigl(\frac{1-\frac{1}{2tx}}{1-\frac{1}{2t}} \Bigr)+A_{0}(mt)\bigl(1-\frac{1}{2t}\bigr)^{\rho} \frac{x^{\rho} \Bigl(\frac{1-\frac{1}{2tx}}{1-\frac{1}{2t}}\Bigl)^{\rho}-1}{\rho} + o \bigl( A_0(mt)\bigr)\,.
\end{eqnarray*}
We now define the building blocks that make up the latter, i.e.
\begin{equation}\label{RVblocks}
    \log V \bigl(mtx -\frac{m}{2} \bigr) - \log V\bigl(mt -\frac{m}{2} \bigr):= I+II + o \bigl( A_0(mt)\bigr).
\end{equation}
Application of the Taylor's expansion $\log(1+y)=y-\frac{y^2}{2}+o(y^2)$, with $y:=\bigl((1-\frac{1}{2tx})/(1-\frac{1}{2t})-1\bigl) \rightarrow 0$, yields:
\begin{equation*}
	I:= \gamma\log\Bigl(1+\bigl(\frac{1-\frac{1}{2tx}}{1-\frac{1}{2t}}-1\bigl)\Bigl) = \frac{\gamma}{2}(t-\frac{1}{2})^{-1}\frac{x^{-1}-1}{-1}-\frac{\gamma}{8}(t-\frac{1}{2})^{-2}\bigl(\frac{x^{-1}-1}{-1}\bigl)^2 + o(t^{-2})\,.
\end{equation*}
We find by analogous reasoning that the Taylor's expansion $(1+y)^a=1+ay+\frac{a(a-1)}{2}y^2+o(y^2)$ leads to
\begin{equation*}
 II:= A_0(mt)\bigl(1-\frac{1}{2t}\bigr)^{\rho} \frac{x^{\rho} \Bigl(\frac{1-\frac{1}{2tx}}{1-\frac{1}{2t}}\Bigl)^{\rho}-1}{\rho} =A_{0}(mt)\frac{x^{\rho}-1}{\rho}-\frac{A_{0}(mt)}{2t}\bigl(x^{\rho-1}-1\bigl)+o(\frac{A_{0}(mt)}{2t}).
\end{equation*}
Since $A_0$ is regularly varying with index $\rho\leq 0$ then, for any $m\geq1$, we have that
\begin{equation*}
    II=A_{0}(mt)\frac{x^{\rho}-1}{\rho}+o(A_{0}(mt))\,.
\end{equation*}
Finally, assembling $I$ and $II$ back into \eqref{RVblocks}, we obtain the full expansion:
\begin{equation*}
    \log V(m(g(tx))-\log V(mg(t))-\gamma \log x=\frac{\gamma}{2}\bigl(t-\frac{1}{2}\bigl)^{-1}\frac{x^{-1}-1}{-1}(1+o(1))+A_{0}(mt)\frac{x^{\rho}-1}{\rho}\bigl(1+o(1)\bigl)\,.  
\end{equation*}
\end{proof}	

\section{First and second order regular variation of $1-F^m$}\label{App:DoAFm}

\begin{prop}
Suppose there exists a function $\alpha$ of constant sign such that
\begin{equation}\label{eq:2RVtailF}
	\limit{t} \frac{ \frac{\overline{F}(tx)}{\overline{F}(t)} - x^{-1/\gamma}}{\alpha(t)} = x^{-1/\gamma} \frac{x^{\rho}-1}{\rho},
\end{equation}
for all $x>0$, where $\rho \leq 0$ is the second order parameter governing the rate of convergence. The convergence is necessarily locally uniform for $x>0$. Also $\alpha(t) \rightarrow 0$, as $t \rightarrow \infty$, and moreover is such that $|\alpha| \in RV_{\rho}$. It will be convenient as well as useful to rephrase the above second order condition in terms of the tail distribution function (d.f.) of the maximum of $m$ i.i.d. random variables with common d.f. $F$, namely \eqref{eq:2RVtailF} holds if and only if
\begin{equation*}
\limit{t} \frac{ \frac{1-F^m(tx)}{1-F^m(t)} - x^{-1/\gamma}}{\beta(t)} = x^{-1/\gamma} \frac{x^{\rho}-1}{\rho},
\end{equation*}
locally uniformly for $x>0$, with $\beta(t) \sim \alpha(t)$, as $t \rightarrow \infty$.
\end{prop}
\begin{proof}
The proof will be tackled by working essentially with the positive function
\begin{equation*}
	h(t) := t^{1/\gamma} \bigl( 1-F(t)\bigr) \in \textrm{ERV}(\rho), 
\end{equation*}
with auxiliary function $\alpha^{\star}(t):= t^{1/\gamma} \bigl( 1-F(t)\bigr) \alpha(t)$ that need not tend to zero. That is, for all $x>0$,
\begin{equation*}
	\limit{t}\frac{(tx)^{1/\gamma}\overline{F}(tx) - t^{1/\gamma}\overline{F}(t)}{\alpha^{\star}(t)} = \limit{t}\frac{h(tx) - h(t)}{\alpha^{\star}(t)} = \frac{x^{\rho}-1}{\rho}.
\end{equation*}
For devising the analogous first order extended regular variation \citep[cf.][App. B2]{deHaanFerreira2006} with intervening $F^m$, we note that $1-F^m(t)= \bigl(1-F(t)\bigr) \bigl( F^{m-1}(t) + F^{m-2}(t) + \ldots + F(t) + 1\bigr) \sim m(1-F(t))$, as $t \rightarrow \infty$, whose lower order term is non-concurrent with $B(t)$ so that
\begin{equation*}
	\frac{(tx)^{1/\gamma}(1-F^m(tx)) - t^{1/\gamma}(1-F^m(t))}{t^{1/\gamma}B(t)} \approx \frac{h(tx) - h(t)}{t^{1/\gamma}B(t)/m} \;\arrowf{t}\, \frac{x^\rho-1}{\rho}, 
\end{equation*}
by assumption, with auxiliary function given in terms of $\alpha(t)= B(t)/\bigl(m (1-F(t))\bigr) \rightarrow 0$. Therefore, setting
\begin{equation*}
	\beta(t) = \frac{m (1-F(t)) }{1-F^m(t)}\alpha(t) \sim \alpha(t) 
\end{equation*}
fulfils the second order condition on $1-F^m$, i.e.
\begin{equation*}
	\frac{(tx)^{1/\gamma}(1-F^m(tx)) - t^{1/\gamma}(1-F^m(t))}{t^{1/\gamma}(1-F^m(t)) \beta(t)} \arrowf{t}\, \frac{x^\rho-1}{\rho}, 
\end{equation*}
for all $x>0$, with $\beta(t) \rightarrow 0$ as $t\rightarrow \infty$.
\end{proof}

\section*{Supporting information}
A comprehensive simulation study aimed at demonstrating the overall remarkable finite sample performance by the proposed hybrid estimation methodology is available with this paper at \url{http://bit.ly/491okdx}. Numerical experiments highlight the practical improvements that stand to be gained from the reduced-bias method as it furnishes a tuning-free, model-agnostic estimation procedure: with the H2 estimator, the selection of the block size $m$ is repurposed into the typical POT-selection of an intermediate number of larger data points (cf. Section~\ref{Sec:OptimalFraction}); the reduced-bias estimator further reveals fairly insensitive to a point-selection of $k_0$ as it allows to shift focus to a range of equally admissible values $k_0$, even when the second order parameter of the underlying model is misspecified at $\tilde{\rho}=-1$ (cf. Section~\ref{Sec:RBH2}). 

\bibliographystyle{apalike}
\bibliography{ReferHillFrechet}

\end{document}